\newtheorem{thm}{Theorem}
\newtheorem{assumption}[thm]{Assumption}
\newtheorem{lem}[thm]{Lemma}
\newtheorem{defi}[thm]{Definition}
\newcommand{\field}[1]{\mathbb{#1}}
\newcommand{\EE}{\field{E}}
\newcommand{\E}{\field{E}}
\newcommand{\GG}{\field{G}}
\newcommand{\PP}{\field{P}}
\newcommand{\RR}{\field{R}}
\newcommand{\R}{\field{R}}
\newcommand{\TT}{\field{T}}
\newcommand{\vip}{\vskip.2cm}
\newcommand{\Bb}{{\mathcal B}}
\newcommand{\Cc}{{\mathcal C}}
\newcommand{\Dd}{\mathcal{D}}
\newcommand{\Ff}{{\mathcal F}}
\newcommand{\Pp}{{\mathcal P}}
\newcommand{\Qq}{{\mathcal Q}}
\newcommand{\Ss}{{\mathcal S}}
\newcommand{\Ttransition}{{\mathcal P}}
\newcommand{\gtilde }{\widetilde{g}}
\def \ep {\varepsilon}
\newcommand{\norme}[1]{ {\left\lVert  #1\right\rVert}}
\begin{document}

\title[Adaptive estimation for bifurcating Markov chains]{Adaptive estimation for bifurcating Markov chains}

\author{S. Val\`ere Bitseki Penda, Marc Hoffmann and Ad\'ela\"ide Olivier}

\address{S. Val\`ere Bitseki Penda, IMB, CNRS-UMR 5584,
Universit\'e de Bourgogne, 9 avenue Alain Savary, 21078 Dijon Cedex, France.}

\email{simeon-valere.bitseki-penda@u-bourgogne.fr}

\address{Marc Hoffmann, CEREMADE, CNRS-UMR 7534,
Universit\'e Paris Dauphine, Place du mar\'echal De Lattre de Tassigny
75775 Paris Cedex 16, France.}

\email{hoffmann@ceremade.dauphine.fr}

\address{Ad\'ela\"ide Olivier, CEREMADE, CNRS-UMR 7534,
Universit\'e Paris Dauphine, Place du mar\'echal De Lattre de Tassigny
75775 Paris Cedex 16, France.}

\email{olivier@ceremade.dauphine.fr}

\begin{abstract} In a first part, we prove Bernstein-type deviation inequalities for bifurcating Markov chains (BMC) under a geometric ergodicity assumption, completing former results of Guyon and Bitseki Penda, Djellout and Guillin. These preliminary results are the key ingredient to implement nonparametric wavelet thresholding estimation procedures: in a second part, we construct nonparametric estimators of the transition density of a BMC, of its mean transition density and of the corresponding invariant density, and show smoothness adaptation over various multivariate Besov classes under $L^p$-loss error, for $1 \leq p < \infty$. We prove that our estimators are (nearly) optimal in a minimax sense. As an application, we obtain new results for the estimation of the splitting size-dependent rate of growth-fragmentation models and we extend the statistical study of bifurcating autoregressive processes.
\end{abstract}

\maketitle

\textbf{Mathematics Subject Classification (2010)}: 62G05, 62M05, 60J80, 60J20, 92D25, 

\textbf{Keywords}: Bifurcating Markov chains, binary trees, deviations inequalities, nonparametric adaptive estimation, minimax rates of convergence, bifurcating autoregressive process, growth-fragmentation processes.

\section{Introduction}
\subsection{Bifurcating Markov chains}

Bifurcating Markov Chains (BMC) are Markov chains indexed by a tree (Athreya and Kang \cite{AthreyaKang98}, Benjamini and Peres \cite{Benjamini94}, Takacs \cite{Takacs01}) that are particularly well adapted to model and understand dependent data mechanisms involved in cell division. To that end, bifurcating autoregressive models (a specific class of BMC, also considered in the paper) were first introduced by Cowan and Staudte \cite{CS86}. More recently Guyon \cite{Guyon} systematically studied BMC in a general framework. In continuous time, BMC encode certain piecewise deterministic Markov processes on trees that serve as the stochastic realisation of growth-fragmentation models (see {\it e.g.} Doumic {\it et al.} \cite{DHKR1}, Robert {\it et al.} \cite{DHKR2} for modelling cell division in {\it Escherichia coli} and the references therein).\\

For $m\geq 0$, let  $\GG_m = \{ 0,1 \}^m $ (with $\GG_0=\{\emptyset\}$) and introduce the infinite genealogical tree
$$\TT = \bigcup_{m=0}^\infty \GG_m.$$
For $u \in \mathbb G_m$, set $|u|=m$ and define the concatenation $u0 = (u,0)\in \mathbb G_{m+1}$ and $u1=(u,1)\in \mathbb G_{m+1}$.
A bifurcating Markov chain is specified by {\bf 1)} a measurable state space $(\mathcal S,\mathfrak S)$ with a Markov kernel (later called $\mathbb T$-transition) $\Ttransition$ from $(\mathcal S, \mathfrak S)$ to $(\mathcal S \times \mathcal S, \mathfrak S \otimes \mathfrak S)$ and 
{\bf 2)} a filtered probability space $\big(\Omega, \mathcal F, (\mathcal F_m)_{m \geq 0}, \PP\big)$. 
Following Guyon, 
we have the 
\begin{defi} \label{def BMC}
A bifurcating Markov chain is a family $(X_u)_{u \in \mathbb T}$ of random variables with value in $(\mathcal S, \mathfrak S)$ such that 
$X_u$ is $\mathcal F_{|u|}$-measurable for every $u \in \mathbb T$ and 
$$\E\big[\prod_{u \in \mathbb G_m} g_u(X_u, X_{u0}, X_{u1})\big|\mathcal F_m\big]= \prod_{u \in \mathbb G_m}\Ttransition g_u(X_u)$$
for every $m \geq 0$ and any family of (bounded) measurable functions $(g_u)_{u \in \mathbb G_m}$, where
$\Ttransition g(x)=\int_{\mathcal S \times \mathcal S}g(x,y,z)\Ttransition(x,dy\,dz)$ denotes the action of $\Ttransition$ on $g$.
\end{defi}
The distribution of $(X_u)_{u \in \mathbb T}$ is thus entirely determined by $\Ttransition$ and an initial distribution for $X_{\emptyset}$. Informally, we may view $(X_u)_{u \in \mathbb T}$ as a population of individuals, cells or particles indexed by $\mathbb T$ and governed by the following dynamics: to each $u \in \mathbb T$ we associate a trait $X_u$ (its size, lifetime, growth rate, DNA content and so on) with value in $\mathcal S$. At its time of death, the particle $u$ gives rize to two children $u0$ and $u1$. Conditional on $X_u=x$, the trait $(X_{u0}, X_{u1}) \in \mathcal S \times \mathcal S$ of the offspring of $u$ is distributed according to $\Ttransition(x,dy\,dz)$.\\

For $n \geq 0$, let $\mathbb T_n = \bigcup_{m=0}^n \mathbb G_m$ denote the genealogical tree up to the $n$-th generation. Assume we observe $\mathbb X^n = (X_u)_{u \in \mathbb  T_n}$, {\it i.e.} we have $2^{n+1}-1$ random variables with value in $\mathcal S$. 
There are several objects of interest that we may try to infer from the data $\mathbb X^n$. Similarly to fragmentation processes (see {\it e.g.} Bertoin \cite{bertoin}) a key role for both asymptotic and non-asymptotic analysis of bifurcating Markov chains is played by the so-called {\it tagged-branch chain}, as shown by Guyon \cite{Guyon} and Bitseki Penda {\it et al.} \cite{BDG14}. The tagged-branch chain $(Y_m)_{m \geq 0}$ corresponds to a lineage picked at random in the population $(X_u)_{u \in \mathbb T}$: it is a Markov chain with value in $\mathcal S$ defined by $Y_0=X_{\emptyset}$ and for $m \geq 1$,
$$Y_{m}=X_{\emptyset \epsilon_1\cdots\epsilon_{m}},$$
where $(\epsilon_m)_{m\geq 1}$ is a sequence of independent Bernoulli variables with parameter $1/2$, independent of $(X_u)_{u \in \mathbb T}$. It has transition
\begin{equation*} 
\displaystyle \Qq = \left(\Pp_{0}+\Pp_{1}\right)/2,
\end{equation*}
obtained from the marginal transitions 
$$\Pp_0(x,dy)=\int_{z \in \mathcal S} \Ttransition(x,dy \,dz)\;\;\text{and}\;\;\Pp_1(x,dz)=\int_{y \in \mathcal S} \Ttransition(x,dy\, dz)$$
of $\Ttransition$.
Guyon proves in \cite{Guyon} that if $(Y_m)_{m \geq 0}$ is ergodic with invariant measure $\nu$, then the convergence 
\begin{equation} \label{eq:lgn1}
\frac{1}{|\GG_n|}\sum\limits_{u\in\GG_n}g(X_u) \rightarrow \int_\mathcal S g(x)\nu(dx)
\end{equation}
holds almost-surely as $n \rightarrow \infty$ for appropriate test functions $g$. Moreover, we also have convergence results of the type 
\begin{equation} \label{eq:lgn2}
\frac{1}{|\TT_{n}|}\sum\limits_{u\in\TT_{n}}g(X_{u}, X_{u0}, X_{u1}) \rightarrow \int_\mathcal S \Ttransition g(x)\nu(dx)
\end{equation}
almost-surely as $n \rightarrow \infty$. These results are appended with central limit theorems (Theorem 19 of \cite{Guyon}) and Hoeffding-type deviations inequalities in a non-asymptotic setting (Theorem 2.11 and 2.12 of Bitseki Penda {\it et al.} \cite{BDG14}). 

\subsection{Objectives} 

The observation of $\mathbb X^n$ enables us to identify $\nu(dx)$ as  $n \rightarrow \infty$ thanks to \eqref{eq:lgn1}. Consequently, convergence \eqref{eq:lgn2} reveals $\Ttransition$ and therefore $\Qq$ is identified as well, at least asymptotically. The purpose of the present paper is at least threefold:
\begin{itemize}
\item[\bf 1)] Construct -- under appropriate regularity conditions -- estimators of $\nu, \Qq$ and $\Ttransition$ and study their rates of convergence as $n \rightarrow \infty$ under various loss functions. 
When $\mathcal S \subseteq \R$ and when $\Ttransition$ is absolutely continuous w.r.t. the Lebesgue measure, we estimate the corresponding density functions under various smoothness class assumptions and build {\it smoothness adaptive} estimators, {\it i.e.} estimator that achieve an optimal rate of convergence without prior knowledge of the smoothness class.\\ 

\item[\bf 2)]  Apply these constructions to investigate further specific classes of BMC. These include binary growth-fragmentation processes, where we subsequently estimate adaptively the splitting rate of a size-dependent model, thus extending previous results of Doumic {\it et al.} \cite{DHKR1} and bifurcating autoregressive processes, where we complete previous studies of Bitseki Penda {\it et al.} \cite{BPEBG15} and Bitseki Penda and Olivier \cite{BO}.\\

\item[\bf 3)] For the estimation of $\nu, \Qq$ and $\Ttransition$ and the subsequent estimation results of {\bf 2)}, prove that our results are sharp in a minimax sense.\\
\end{itemize}
Our smoothness adaptive estimators are based on wavelet thresholding for density estimation (Donoho {\it et al.} \cite{DJKP2} in the generalised framework of Kerkyacharian and Picard \cite{KP}). Implementing these techniques requires concentration properties of empirical wavelet coefficients. To that end, we prove new deviation inequalities for bifurcating Markov chains that we develop independently in a more general setting, when $\mathcal S$ is not necessarily restricted to $\R$. Note also that when $\Pp_0=\Pp_1$, we have $\Qq=\Pp_0=\Pp_1$ as well and we retrieve the usual framework of nonparametric estimation of Markov chains when the observation is based on $(Y_i)_{1 \leq i \leq n}$ solely. We are therefore in the line of combining and generalising the study of Cl\'emen\c{c}on \cite{Clemencon2} and Lacour \cite{Lacour1, Lacour2} that both consider adaptive estimation for Markov chains when $\mathcal S \subseteq \R$. 

\subsection{Main results and organisation of the paper}

In Section~\ref{sec:deviations}, we generalise the Hoeffding-type deviations inequalities of Bitseki Penda {\it et al.} \cite{BDG14} for BMC to Bernstein-type inequalities: when $\Ttransition$ is uniformly geometrically ergodic (Assumption~\ref{ass:ergq} below), we prove in Theorem~\ref{thm:triplets} deviations of the form
$$
\PP \Big( \frac{1}{|\GG_n|} \sum_{u \in \GG_n} g(X_u,X_{u0},X_{u1}) - \int \Ttransition g\, d\nu \geq \delta \Big) \leq \exp\Big(-\frac{\kappa |\GG_n| \delta^2}{\Sigma_n(g) + |g|_\infty \delta} \Big)
$$
and
$$
\PP \Big( \frac{1}{|\TT_n|} \sum_{u \in \TT_n} g(X_u,X_{u0},X_{u1}) - \int \Ttransition g\, d\nu \geq \delta \Big) \leq \exp\Big(-\frac{\tilde \kappa n^{-1}|\TT_n| \delta^2}{\Sigma_n(g) + |g|_\infty \delta} \Big),
$$
where $\kappa, \tilde \kappa>0$ only depend on $\Ttransition$
and $\Sigma_n(g)$ is a variance term which depends on a combination of the $L^p$-norms of $g$ for $p = 1,2,\infty$ w.r.t. a common dominating measure for the family $\{\Qq(x,dy), x \in \mathcal S\}$.  The precise results are stated in Theorems~\ref{thm:1step} and~\ref{thm:triplets}.\\ 

Section~\ref{sec:stat} is devoted to the statistical estimation of $\nu, \Qq$ and $\Ttransition$ when $\mathcal S \subseteq \R$ and the family $\{\Ttransition(x,dy\,dz), x \in \mathcal S\}$ is dominated by the Lebesgue measure on $\R^2$. In that setting, abusing notation slightly, we have $\nu(dx)=\nu(x)dx$, $\Qq(x,dy)=\Qq(x,y)dy$ and $\Ttransition(x,dy\,dz)=\Ttransition(x,y,z)dydz$ for some functions $x \leadsto \nu(x)$, $(x,y)\leadsto \Qq(x,y)$ and $(x,y,z)\leadsto \Ttransition(x,y,z)$ that we reconstruct nonparametrically. Our estimators are constructed in several steps:
\begin{itemize}
\item[{\bf i)}] We approximate the functions $\nu(x)$, $f_\Qq(x,y)=\nu(x)\Qq(x,y)$ and $f_\Ttransition(x,y,z)=\nu(x) \Ttransition(x,y,z)$ by atomic representations 
\begin{align*}
\nu(x) &\approx \sum_{\lambda \in \mathcal V^1(\nu)} \langle \nu,\psi_\lambda^1\rangle \psi_\lambda^1(x),\\
f_\Qq(x,y) & \approx \sum_{\lambda \in \mathcal V^2(f_\Qq)} \langle f_\Qq,\psi_\lambda^2\rangle \psi_\lambda^2(x,y),\\
f_\Ttransition(x,y,z) & \approx \sum_{\lambda \in \mathcal V^3(f_\Ttransition)} \langle f_\Ttransition,\psi_\lambda^3\rangle \psi_\lambda^3(x,y,z),
 \end{align*}
where $\langle \cdot, \cdot \rangle$ denotes the usual $L^2$-inner product (over $\R^d$, for $d=1,2,3$ respectively) and $\big(\psi_\lambda^d, \lambda \in \mathcal V^d(\cdot)\big)$ is a collection of functions (wavelets) in $L^2(\R^d)$ that are localised in time and frequency, indexed by a set $\mathcal V^d(\cdot)$ that depends on the signal itself\footnote{The precise meaning of the symbol $\approx$ and the properties of the $\psi_\lambda$'s are stated precisely in Section~\ref{atomic decomposition}.}.\\

\item[{\bf ii)}]  We estimate 
\begin{align*}
\langle  \nu,\psi_\lambda^1\rangle &\;\; \text{by} \;\;|\mathbb T_n|^{-1}\sum_{u \in \mathbb T_n}\psi_\lambda^1(X_u),\\
\langle f_\Qq,\psi_\lambda^2\rangle &\;\; \text{by}\;\; |\mathbb T_n^\star|^{-1}\sum_{u \in \mathbb T_n^\star}\psi_\lambda^2(X_{u^-}, X_u),\\
\langle f_\Ttransition,\psi_\lambda^3\rangle &\;\; \text{by} \;\;|\mathbb T_{n-1}|^{-1}\sum_{u \in \mathbb T_{n-1}}\psi_\lambda^3(X_u, X_{u0}, X_{u1}),
\end{align*}
where  $X_{u^-}$ denotes the trait of the parent of $u$ and $\mathbb T_n^\star = \mathbb T_n \setminus \mathbb G_0$, and specify a selection rule for $\mathcal V^d(\cdot)$ (with the dependence in the unknown function somehow replaced by an estimator). The rule is dictated by hard thresholding over the estimation of the coefficients that are kept only if they exceed some noise level, tuned with $|\mathbb T_n|$ and prior knowledge on the unknown function, as follows by standard density estimation by wavelet thresholding (Donoho {\it et al.} \cite{DJKP}, Kerkyacharian and Picard \cite{KP}).\\ 

\item[{\bf iii)}]  Denoting by $\widehat \nu_n(x)$, $\widehat f_{n}(x,y)$ and $\widehat f_{n}(x,y,z)$ the estimators of $\nu(x)$, $f_\Qq(x,y)$ and $f_\Ttransition(x,y,z)$ respectively constructed in Step {\bf ii)}, we finally take as estimators for $\Qq(x,y)$ and $\Ttransition(x,y,z)$ the quotient estimators 
$$\widehat \Qq_n(x,y)=\frac{\widehat f_{n}(x,y)}{\widehat \nu_n(x)}\;\;\text{and}\;\;\widehat {\Ttransition}_n(x,y,z)=\frac{\widehat f_{n}(x,y,z)}{\widehat \nu_n(x)}$$
provided $\widehat \nu_n(x)$ exceeds a minimal threshold.
\end{itemize}
Beyond the inherent technical difficulties of the approximation Steps i) and iii), the crucial novel part is the estimation Step ii), where Theorems~\ref{thm:1step} and~\ref{thm:triplets} are used to estimate precisely the probability that the thresholding rule applied to the empirical wavelet coefficient is close in effect to thresholding the true coefficients.

When $\nu, \Qq$ or $\Ttransition$ (identified with their densities w.r.t. appropriate dominating measures) belong to an isotropic Besov ball of smoothness $s$ measured in $L^\pi$ over a domain $\mathcal D^d$ in $\R^d$, with $s > d/\pi$ and $d=1,2,3$ respectively, we prove in Theorems~\ref{thm:ratenu},~\ref{thm:rateq} and~\ref{thm:rateP} that if $\mathcal Q$ is uniformly geometrically ergodic, then our estimators achieve the rate $|\mathbb T_n|^{-\alpha_d(s,p,\pi)}$ in $L^p(\mathcal D)$-loss, up to additional $\log |\mathbb T_n|$ terms, where 
$$\alpha_d(s,p,\pi) =\min \Big\{ \frac{s}{2s+d}, \frac{s+d(1/p-1/\pi)}{2s+d(1-2/\pi)}\Big\}$$ 
is the usual exponent for the minimax rate of estimation of a $d$-variate function with order of smoothness $s$ measured in $L^\pi$ in $L^p$-loss error. This rate is nearly optimal in a minimax sense for $d=1$, as follows from particular case $\Qq(x,dy)=\nu(dy)$ that boils down to density estimation with  $|\mathbb T_n|$ data: the optimality is then a direct consequence of Theorem 2 in Donoho {\it et al.} \cite{DJKP}. As for the case $d=2$ and $d=3$, the structure of BMC comes into play and we need to prove a specific optimality result, stated in Theorems~\ref{thm:rateq} and~\ref{thm:rateP}. We rely on classical lower bound techniques for density estimation and Markov chains (Hoffmann \cite{H4}, Cl\'emen\c{c}on \cite{Clemencon2}, Lacour \cite{Lacour1, Lacour2}).\\

We apply our generic results in Section~\ref{sec:examples} to two illustrative examples. We consider in Section~\ref{application DHKR1} the growth-fragmentation model as studied in Doumic {\it et al.} \cite{DHKR1}, where we estimate the size-dependent splitting rate of the model as a function of the invariant measure of an associated BMC in Theorem~\ref{prop:rateB}. This enables us to extend the recent results of Doumic {\it et al.} in several directions: adaptive estimation, extension of the smoothness classes and the loss functions considered, and also a proof of a minimax lower bound. In Section~\ref{application BAR}, we show how bifurcating autoregressive models (BAR) as developped for instance in de Saporta {\it et al.} \cite{BdSGP09} and Bitseki Penda and Olivier \cite{BO} are embedded into our generic framework of estimation. A numerical illustration highlights the feasibility of our procedure in practice and is presented in Section~\ref{numerical}. The proofs are postponed to Section~\ref{sec:proofs}.

\section{Deviations inequalities for empirical means} \label{sec:deviations}
In the sequel, we fix a (measurable) subset $\mathcal D \subseteq \mathcal S$ that will be later needed for statistical purposes.
We need some regularity on the $\TT$-transition $\Ttransition$ via its mean transition $\Qq=\tfrac{1}{2}(\Pp_0+\Pp_1)$. 

\begin{assumption} \label{ass:densityq}
The family $\{\Qq(x,dy), x\in \mathcal S\}$ is dominated by a common sigma-finite measure $\mathfrak n(dy)$. We have (abusing notation slightly)
$$\Qq(x,dy)=\Qq(x,y)\mathfrak n(dy)\;\;\text{for every}\;\;x\in \mathcal S,$$
for some $\Qq:\mathcal S^2\rightarrow [0,\infty)$ such that 
$$|\Qq|_{\mathcal D} = \sup_{x \in \mathcal S, y \in \mathcal D}\Qq(x,y) <\infty.$$
\end{assumption}
An invariant probability measure for $\Qq$ is a probability $\nu$ on $(\mathcal S, \mathfrak S)$ such that
$\nu \Qq=\nu$
where $\nu \Qq(dy) = \int_{x \in \mathcal S}\nu(dx)\Qq(x,dy)$. 
We set
$$\Qq^{r}(x,dy)=\int_{z\in \Ss}\Qq(x,dz)\Qq^{r-1}(z,dy)\;\;\text{with}\;\;\Qq^0(x,dy) = \delta_x(dy)$$
for the $r$-th iteration of $\Qq$. 
For a function  $g :\Ss^d \rightarrow \RR$ with $d=1,2,3$ and $1 \leq p \leq \infty$, we denote by $|g|_p$ its $L^p$-norm w.r.t. the measure $\mathfrak n^{\otimes d}$, allowing for the value $|g|_p=\infty$ if $g \notin L^p(\mathfrak n^{\otimes d})$. The same notation applies to a function  $g:\mathcal D^d \rightarrow \R$ tacitly considered as a function from $\mathcal S^d\rightarrow \R$ by setting $g(x)=0$ for $x \in \mathcal S \setminus \mathcal D$.
\begin{assumption}\label{ass:ergq} The mean transition $\Qq$ admits a unique invariant probability measure $\nu$
and there exist $R>0$ and $0 < \rho <1/2$ such that
\begin{equation*}\label{eq:lyapunov}
\big|\Qq^{m}g(x) - \int_{\mathcal S} g\,d\nu\big|\leq R | g |_{\infty} \, \rho^{m}, \quad x \in \Ss , \quad m \geq 0,
\end{equation*}
for every $g$ integrable w.r.t. $\nu$. 
\end{assumption}
Assumption~\ref{ass:ergq} is a uniform geometric ergodicity condition that can be verified in most applications using the theory of Meyn and Tweedie \cite{MT}. 
The ergodicity rate should be small enough ($\rho < 1/2$) and this point is crucial for the proofs. However this is sometimes delicate to check in applications and we refer to Hairer and Mattingly \cite{HairerMattingly} for an explicit control of the ergodicity rate.\\

Our first result is a deviation inequality for empirical means over $\mathbb G_n$ or $\mathbb T_n$. We need some notation. Let
\begin{align*}
\kappa_1 = & \kappa_1(\Qq, \mathcal D) = 32 \max \big\{ |\Qq|_{\mathcal D} , 4 |\Qq|_{\mathcal D}^2,  4 R^2(1+\rho)^2 \big\}, \\
\kappa_2 = & \kappa_2(\Qq) = \tfrac{16}{3}  \max\big\{ 1+R\rho ,  R(1+\rho)\big\}, \\
\kappa_3 = & \kappa_3(\Qq, \mathcal D) = 96 \max \big\{ |\Qq|_{\mathcal D} , 16 |\Qq|_{\mathcal D}^2 , 4 R^2(1+\rho)^2 (1-2\rho)^{-2}  \big\}, \\
\kappa_4 = & \kappa_4(\Qq) = \tfrac{16}{3}  \max \big\{ 1+R\rho ,  R(1+\rho)(1-2\rho)^{-1} \big\}, 
\end{align*}
where $|\Qq|_{\mathcal D} = \sup_{x \in \mathcal S, y \in \mathcal D}\Qq(x,y)$ is defined in Assumption~\ref{ass:densityq}. For $g:\mathcal S^d \rightarrow \R$, define $\Sigma_{1,1}(g)=|g|_2^2$ and for $n \geq 2$,
\begin{equation} \label{def sigma 1n}
\Sigma_{1,n}(g)  = |g|_2^2 + \min_{1 \leq \ell \leq n-1}\big(|g|_1^2 2^\ell+ |g|_\infty^2 2^{-\ell} \big).
\end{equation}
Define also $\Sigma_{2,1}(g) =  |\Ttransition g^2|_{1}$ and for $n\geq 2$,
\begin{equation} \label{eq:sigma2}
\Sigma_{2,n}(g)  =  |\Ttransition g^2|_{1} + \min_{1 \leq \ell \leq n-1}\big(|\Ttransition g|_1^2 2^\ell + |\Ttransition g|_\infty^2 2^{-\ell} \big).
\end{equation}
\begin{thm}
\label{thm:1step}
Work under Assumptions~\ref{ass:densityq} and~\ref{ass:ergq}. Then, for every $n\geq 1$ and every $g: \mathcal D \subseteq \mathcal S \rightarrow \R$ integrable w.r.t. $\nu$, the following inequalities hold true:\\

\noindent {\bf (i)} For any $\delta > 0$ such that $\delta \geq 4 R |g|_\infty |\GG_n|^{-1}$, we have
$$
\PP \Big( \frac{1}{|\GG_n|} \sum_{u \in \GG_n} g(X_u) - \int_{\mathcal S} g\,d\nu \geq \delta \Big) \leq  \exp\Big(\frac{ - |\GG_n| \delta^2}{ \kappa_1 \Sigma_{1,n}(g) + \kappa_2 |g|_\infty   \delta} \Big).
$$
\noindent {\bf (ii)} For any $\delta > 0$ such that $\delta \geq 4R (1-2\rho)^{-1} |g|_\infty |\TT_n|^{-1}$, we have
$$
\PP \Big( \frac{1}{|\TT_n|} \sum_{u \in \TT_n} g(X_u) - \int_\mathcal S g\,d\nu\geq \delta \Big) \leq \exp\Big(\frac{ - |\TT_n| \delta^2}{ \kappa_3 \Sigma_{1,n}(g) + \kappa_4 |g|_\infty \delta } \Big).
$$

\end{thm}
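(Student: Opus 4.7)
The plan is to combine a Chernoff bound with a generational martingale decomposition of the tree sum. After centering (replacing $g$ by $g - \int g\,d\nu$, which costs only a factor $2$ in $|g|_\infty$), the key algebraic identity, obtained by iterating the BMC property from Definition~\ref{def BMC} together with $\nu\Qq=\nu$, reads
\begin{equation*}
\sum_{u \in \GG_n} g(X_u) \;=\; \sum_{k=1}^n 2^{n-k}\,\Delta_k\bigl(\Qq^{n-k} g\bigr) \;+\; 2^n\,\Qq^n g(X_\emptyset),
\end{equation*}
where $\Delta_k(h) := \sum_{v \in \GG_{k-1}}\bigl(h(X_{v0})+h(X_{v1}) - 2\Qq h(X_v)\bigr)$. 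By Definition~\ref{def BMC}, the pairs $(X_{v0},X_{v1})_{v \in \GG_{k-1}}$ are conditionally independent given $\mathcal F_{k-1}$, so each $\Delta_k(h)$ is a conditionally independent centered sum whose summands are bounded by $4|h|_\infty$. The boundary term is deterministically dominated by $R|g|_\infty(2\rho)^n \leq R|g|_\infty$ via Assumption~\ref{ass:ergq} and is absorbed in the threshold condition $\delta \geq 4R|g|_\infty/|\GG_n|$.

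The second step is to apply a classical Bernstein inequality conditional on $\mathcal F_{k-1}$ to each $\Delta_k(\Qq^{n-k}g)$. The uniform bound is immediate from ergodicity: $|\Qq^{n-k}g|_\infty \leq R|g|_\infty\rho^{n-k}$. The conditional variance satisfies
\begin{equation*}
\mathrm{Var}\bigl(\Delta_k(h)\mid\mathcal F_{k-1}\bigr) \;\lesssim\; \sum_{v\in\GG_{k-1}}\bigl(\Qq h^2(X_v) + \Ttransition(h\otimes h)(X_v)\bigr),
\end{equation*}
whose expectation is controlled via Assumption~\ref{ass:densityq}: since $\nu = \nu\Qq$ forces $\nu$ to have density bounded by $|\Qq|_{\mathcal D}$ on $\mathcal D$, we obtain $\E[\Qq h^2(X_v)] \leq |\Qq|_{\mathcal D}\,|h|_2^2$, and a Cauchy--Schwarz argument bounds the cross term the same way. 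Summing the resulting Bernstein estimates over $k$ with the weights $2^{n-k}$ from the decomposition produces a convergent geometric series in $k$ precisely because $\rho<1/2$, yielding a Bernstein-type bound whose variance proxy is governed by an interpolation of $L^1/L^2/L^\infty$-norms of $g$.

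The specific form $\min_\ell(|g|_1^2 2^\ell + |g|_\infty^2 2^{-\ell})$ appearing in $\Sigma_{1,n}(g)$ then arises from a dyadic splitting of the sum over $k$ at a free threshold $k = n-\ell$: for $k$ close to $n$ (small $n-k$, near the leaves) one bounds $|\Qq^{n-k}g|_2^2$ via the smoothing estimate $|\Qq|_{\mathcal D}|g|_1^2$ coming from the density bound, producing the $|g|_1^2 2^\ell$ piece; for $k$ close to the root (large $n-k$) one uses the $L^\infty$-ergodicity bound $R^2|g|_\infty^2\rho^{2(n-k)}$, which summed over $k$ gives the $|g|_\infty^2 2^{-\ell}$ piece; optimizing in $\ell$ yields the minimum. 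Part (ii) is treated by applying the same decomposition simultaneously to all generations $m=0,\dots,n$ and reorganizing the resulting double sum over $(k,m)$; this produces an additional geometric factor $\sum_{m\geq k}(2\rho)^{m-k} \leq (1-2\rho)^{-1}$, which is precisely the origin of the factor $(1-2\rho)^{-1}$ in $\kappa_3,\kappa_4$. Alternatively, one may apply (i) to each generation and combine via a union bound with weights $c_m = 2^m/|\TT_n|$, exploiting that the terminal generation dominates since $|\TT_n|\sim 2|\GG_n|$. The main technical obstacle is the fine variance analysis of step two: carefully tracking the $L^1$, $L^2$ and $L^\infty$ sizes of the iterates $\Qq^{n-k}g$ simultaneously, and optimizing the dyadic split, is what produces the sharp $\Sigma_{1,n}(g)$; the assumption $\rho<1/2$ is non-negotiable throughout and is exactly what ensures convergence of all the geometric series that arise.
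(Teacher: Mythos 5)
Your proposal follows essentially the same route as the paper: the identity you write is exactly the paper's iterated decomposition (with $r=n-k$), the conditional-independence-plus-Bernstein-MGF step is the paper's Steps 1--2 together with its Lemma~\ref{lem:control1}, the dyadic split at $\ell$ optimizing between the $L^1$-smoothing bound near the leaves and the $L^\infty$-ergodicity bound near the root is the paper's Step 3, and the geometric factor $(1-2\rho)^{-1}$ in part (ii) arises in the paper exactly as you describe, via the aggregated increments $\Upsilon_r$ and Lemma~\ref{lem:control2}. One point needs correcting: for the iterated conditional Chernoff argument the conditional variance of each increment must be bounded \emph{almost surely}, not merely in expectation as you state; the detour through a density bound on $\nu$ is both unnecessary and insufficient, whereas the pointwise bound $\Qq h^2(x)=\int h^2(y)\Qq(x,y)\mathfrak n(dy)\leq |\Qq|_{\mathcal D}|h|_2^2$ holds for every $x$ directly from Assumption~\ref{ass:densityq} because $h$ vanishes outside $\mathcal D$, which is what the paper uses. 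Also, your alternative route for (ii) via a union bound over generations does not work (the bound for the low generations $m$ is of order $\exp(-c\,2^m\delta^2)$ and is not small for small $m$); your primary route, reorganizing the double sum, is the correct one and is the paper's.
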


\vip
\begin{thm}
\label{thm:triplets}
Work under Assumptions~\ref{ass:densityq} and~\ref{ass:ergq}.
Then, for every $n\geq2$ and for every $g:\mathcal D^3 \subseteq \mathcal S^3 \rightarrow \R$ such that $\Ttransition g$ is well defined and integrable w.r.t. $\nu$, the following inequalities hold true:\\

\noindent {\bf (i)} For any $\delta > 0$ such that $\delta \geq 4R |\Ttransition g|_\infty |\GG_n|^{-1}$, we have
$$
\PP \Big( \frac{1}{|\GG_n|} \sum_{u \in \GG_n} g(X_u,X_{u0},X_{u1}) - \int_\mathcal S\Ttransition g\,d\nu \geq \delta \Big) \leq  \exp\Big(\frac{ - |\GG_n| \delta^2}{ \kappa_1 \Sigma_{2,n}(g) + \kappa_2 |g|_\infty   \delta} \Big).
$$
\noindent {\bf (ii)} For any $\delta > 0$ such that $\delta \geq 4(nR |\Ttransition g|_\infty + |g|_\infty) |\TT_{n-1}|^{-1}$, we have
$$
\PP \Big( \frac{1}{|\TT_{n-1}|} \sum_{u \in \TT_{n-1}} g(X_u,X_{u0},X_{u1}) - \int_\mathcal S \Ttransition g\,d\nu \geq \delta \Big) \leq \exp\Big(\frac{ - n^{-1} |\TT_{n-1}| \delta^2}{ \kappa_1 \Sigma_{2,n-1}(g) + \kappa_2  |g|_\infty \delta } \Big).
$$
\end{thm}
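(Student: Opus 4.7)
Both parts of the theorem rest on the additive decomposition
\begin{equation*}
g(X_u,X_{u0},X_{u1}) \;=\; D_u + \Ttransition g(X_u),
\qquad
D_u := g(X_u,X_{u0},X_{u1}) - \Ttransition g(X_u),
\end{equation*}
which splits each summand into a conditionally centered fluctuation and a scalar function of the chain. By Definition~\ref{def BMC}, conditionally on $\mathcal F_m$ the variables $(D_u)_{u\in\GG_m}$ are independent, have zero mean, are bounded by $2|g|_\infty$, and satisfy $\EE[D_u^2\mid \mathcal F_m]\le \Ttransition g^2(X_u)$. The second term $\Ttransition g$ is a scalar function on $\mathcal S$ with $|\Ttransition g|_\infty\le |g|_\infty$, to which Theorem~\ref{thm:1step} can be applied directly.

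\textbf{Proof of (i).} Write
\begin{equation*}
\sum_{u\in\GG_n}\bigl(g(X_u,X_{u0},X_{u1})-\textstyle\int\Ttransition g\,d\nu\bigr) \;=\; M_n + S_n,
\end{equation*}
with $M_n=\sum_{u\in\GG_n}D_u$ and $S_n=\sum_{u\in\GG_n}\bigl(\Ttransition g(X_u)-\int\Ttransition g\,d\nu\bigr)$. The ergodic sum $S_n/|\GG_n|\ge\delta/2$ is handled by Theorem~\ref{thm:1step}\textbf{(i)} applied to $\Ttransition g$, which contributes exactly the $\min_\ell(|\Ttransition g|_1^2\,2^\ell+|\Ttransition g|_\infty^2\,2^{-\ell})$ part of $\Sigma_{2,n}(g)$. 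For the martingale term $M_n$, I apply the classical Bernstein inequality \emph{conditionally} on $\mathcal F_n$, yielding
\begin{equation*}
\PP\bigl(M_n\ge |\GG_n|\delta/2\mid\mathcal F_n\bigr)\;\le\;\exp\Bigl(-\tfrac{|\GG_n|^2\delta^2/8}{V_n + c\,|g|_\infty|\GG_n|\delta}\Bigr),
\qquad
V_n \le \sum_{u\in\GG_n}\Ttransition g^2(X_u).
\end{equation*}
To de-randomise $V_n$, I intersect with the event $\{V_n\le C|\GG_n|\,|\Ttransition g^2|_1\}$; its complement is bounded by a second application of Theorem~\ref{thm:1step}\textbf{(i)}, this time to $\Ttransition g^2$, noting that $\int\Ttransition g^2\,d\nu\le |\Qq|_{\mathcal D}\,|\Ttransition g^2|_1$ (Assumption~\ref{ass:densityq} implies that $\nu$ has density at most $|\Qq|_{\mathcal D}$ on $\mathcal D$). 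The three deviation budgets are balanced and $|\Qq|_{\mathcal D}$ is absorbed into $\kappa_1$.

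\textbf{Proof of (ii).} The ergodic part $S_n=\sum_{u\in\TT_{n-1}}\bigl(\Ttransition g(X_u)-\int\Ttransition g\,d\nu\bigr)$ is controlled by Theorem~\ref{thm:1step}\textbf{(ii)} applied to $\Ttransition g$. For the martingale part, decompose by generation $\sum_{u\in\TT_{n-1}}D_u = \sum_{m=0}^{n-1} M^{(m)}$ with $M^{(m)}=\sum_{u\in\GG_m}D_u$, which is a sequence of orthogonal martingale increments in $m$. Two strategies are available: either a Freedman-type martingale Bernstein inequality applied to $\sum_{u\in\TT_{n-1}}D_u$ with the total conditional variance de-randomised via Theorem~\ref{thm:1step}\textbf{(ii)} applied to $\Ttransition g^2$; or, simpler, a per-generation conditional Bernstein bound combined with a union bound over the $n$ generations. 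Either route leads to the announced exponent of order $n^{-1}|\TT_{n-1}|\delta^2$, the factor $n^{-1}$ accounting for the accumulation of deviation across the $n$ generations.

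\textbf{Main obstacle.} The delicate step is the control of the random conditional variance $V_n$: Bernstein's conditional bound is bilinear in $(V_n,\delta)$ and $V_n$ must itself be de-randomised, which requires running Theorem~\ref{thm:1step} a second time on the auxiliary function $\Ttransition g^2$. This is precisely what forces $|\Ttransition g^2|_1$ — and not $|\Ttransition g|_2^2$ — to appear as the leading term of $\Sigma_{2,n}(g)$. Calibrating the deviation budgets so that the ergodic, martingale and variance-localization contributions all produce exponents of the same order, and so that the final constants remain compatible with $\kappa_1,\kappa_2$ of Theorem~\ref{thm:1step}, is the technical crux.
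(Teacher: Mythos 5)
Your decomposition $g(\Delta_u)=D_u+\Ttransition g(X_u)$ into a martingale part and an ergodic part is natural, and you correctly identify the de-randomisation of the conditional variance as the crux --- but your proposed resolution of that crux does not work, and this is a genuine gap. The event $\{V_n>C|\GG_n|\,|\Ttransition g^2|_1\}$ is a deviation of the empirical mean of the auxiliary function $h=\Ttransition g^2$ at a \emph{fixed} level $\delta'\asymp|\Ttransition g^2|_1$, so the second application of Theorem~\ref{thm:1step}\,(i) returns $\exp\bigl(-|\GG_n|\delta'^2/(\kappa_1\Sigma_{1,n}(h)+\kappa_2|h|_\infty\delta')\bigr)$: an exponent that does not scale with $\delta^2$ and that involves $|\Ttransition g^2|_\infty$ and $|\Ttransition g^2|_2^2$, quantities of order $|g|_\infty^2$ and fourth order in $g$ which are not controlled by $\Sigma_{2,n}(g)$ and $|g|_\infty$. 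This third term cannot be absorbed into the claimed bound --- for the wavelet test functions $\psi_\lambda^3$ at high resolution it is in fact the dominant term --- and the union bound additionally leaves a multiplicative prefactor $3$ in front of the exponential, which the statement does not allow. The same objection applies, amplified, to your sketch of (ii): a per-generation union bound is moreover ruined by the early generations (the term $\PP(\gtilde(\Delta_\emptyset)\ge c\delta)$ carries no factor $|\TT_{n-1}|$ in the exponent at all), so only the Freedman-type variant could survive, and it inherits the same variance-localisation problem.

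The paper sidesteps the random-variance issue by never leaving the moment-generating-function world. Writing $\gtilde=g-\nu(\Ttransition g)$, it conditions on $\Ff_{n-1}$ --- one generation further back than you do --- and groups siblings, so that the increment $Z_u=\gtilde(\Delta_{u0})+\gtilde(\Delta_{u1})-2\Qq(\Ttransition\gtilde)(X_u)$ is centered with conditional second moment bounded \emph{deterministically} by $4\Qq(\Ttransition g^2)(X_u)\le 4|\Qq|_{\mathcal D}\,|\Ttransition g^2|_1$, since $\Qq h(x)=\int h(y)\Qq(x,y)\mathfrak n(dy)\le|\Qq|_{\mathcal D}|h|_1$ under Assumption~\ref{ass:densityq} and $\Ttransition g^2$ vanishes off $\mathcal D$. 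The Bennett-type bound $\EE[e^{\lambda Z}]\le\exp\bigl(\lambda^2\sigma^2/(2(1-\lambda M/3))\bigr)$ is then applied recursively down the tree, the leftover factor $\prod_{u\in\GG_{n-1}}e^{2\lambda\Qq(\Ttransition\gtilde)(X_u)}$ being exactly the object treated in Steps 2--3 of the proof of Theorem~\ref{thm:1step}\,(i) with $\gtilde$ replaced by $\Ttransition\gtilde$; for (ii) the sum over $\TT_{n-1}$ is spread across generations by H\"older's inequality, which is what produces the $n^{-1}$ in the exponent. If you want to salvage your route, the fix is precisely this: push the conditioning back one generation so that the variance proxy becomes deterministic, rather than de-randomising an $\Ff_n$-measurable variance after the fact.
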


\vip
A few remarks are in order: \\

\noindent {\bf 1)} Theorem~\ref{thm:1step}\,(i) is a direct consequence of Theorem~\ref{thm:triplets}\,(i) but Theorem~\ref{thm:1step}\,(ii) is not a corollary of Theorem~\ref{thm:triplets}\,(ii): we note that a slow term or order $n^{-1} \approx (\log |\mathbb T_n|)^{-1}$ comes in Theorem~\ref{thm:triplets}\,(ii).

\noindent {\bf 2)}
Bitseki-Penda \textit{et al.} in \cite{BDG14} study similar Hoeffding-type deviations inequalities for functionals of bifurcating Markov chains under ergodicity assumption and for uniformly bounded functions. In the present work and for statistical purposes, we need Bernstein-type deviations inequalities which require a specific treatment than cannot be obtained from a  direct adaptation of \cite{BDG14}. In particular, we apply our results to multivariate wavelets test functions $\psi_\lambda^d$ that are well localised but unbounded, and a fine control of the conditional variance $\Sigma_{i,n}(\psi_\lambda^d)$, $i=1,2$ is of crucial importance.

\noindent {\bf 3)} Assumption~\ref{ass:ergq} about the uniform geometric ergodicity is quite strong, although  satisfied in the two examples developed in  Section~\ref{sec:examples} (at the cost however of assuming that the splitting rate of the growth-fragmentation model has bounded support in Section~\ref{application DHKR1}). Presumably, a way to relax this restriction would be to require a weaker geometric ergodicity condition of the form
$$
\big|\Qq^{m}g(x) - \int_{\mathcal S} g\,d\nu\big|\leq R | g |_{\infty}V(x) \, \rho^{m}, \quad x \in \Ss , \quad m \geq 0,
$$
for some Lyapunov function $V:\mathcal S\rightarrow [1,\infty)$. Analogous results could then be obtained via transportation information inequalities for bifurcating Markov chains with a similar approach as in Gao {\it et al.} \cite{GaoGuillinWu}, but this lies beyond the scope of the paper.

\section{Statistical estimation} \label{sec:stat}

In this section, we take $(\mathcal S, \mathfrak S)=\big(\R, \mathcal B(\R)\big)$. As in the previous section, we fix a compact interval $\mathcal D \subseteq \mathcal S$. The following assumption will be needed here

\begin{assumption} \label{densityTtrans} The family $\{\Ttransition(x,dy\,dz), x \in \mathcal S\}$ is dominated w.r.t. the Lebesgue measure on $\big(\R^2, \mathcal B(\R^2)\big)$. We have (abusing notation slightly)
$$\Ttransition(x,dy\,dz)=\Ttransition(x,y,z)dy\,dz\;\;\text{for every}\;\;x \in \mathcal S$$
for some $\Ttransition:\mathcal S^3\rightarrow [0,\infty)$ such that 
$$|\Ttransition|_{\mathcal D,1}=\int_{\mathcal S^2}\sup_{x \in \mathcal D}\Ttransition(x,y,z)dydz<\infty.$$
\end{assumption}

Under Assumptions~\ref{ass:densityq},~\ref{ass:ergq} and~\ref{densityTtrans} with $\mathfrak n(dy)=dy$, we have (abusing notation slightly)
$$\Ttransition(x,dy\,dz)=\Ttransition(x,y,z)dy\,dz,\;\;\Qq(x,dy)=\Qq(x,y)dy\;\;\text{and}\;\;\nu(dx)=\nu(x)dx.$$
For some $n \geq 1$, we observe $\mathbb X_n = (X_u)_{u \in \TT_n}$ 
and we aim at constructing nonparametric estimators of $x \leadsto \nu(x)$, $(x,y) \leadsto \Qq(x,y)$ and $(x,y,z) \leadsto \Ttransition(x,y,z)$ for $x,y,z \in \mathcal D$. To that end, we use regular wavelet bases adapted to the domain $\mathcal D^d$ for $d=1,2,3$.

\subsection{Atomic decompositions and wavelets} \label{atomic decomposition}

Wavelet bases $(\psi_\lambda^d)_\lambda$ adapted to a domain ${\mathcal D}^d$ in $\R^d$, for $d=1,2,3$ are documented in numerous textbooks, see {\it e.g.} Cohen \cite{Co}. The multi-index $\lambda$ concatenates the spatial index and the
resolution level $j=|\lambda|$. We set
$\Lambda_j=\{\lambda,\;|\lambda|=j\}$ and $\Lambda=\cup_{j \geq
-1} \Lambda_j$. Thus, for $g \in L^\pi(\mathcal D^d)$ for some $\pi \in (0,\infty]$, we have
$$g = \sum_{j \geq -1} \sum_{\lambda \in \Lambda_j}
g_\lambda \psi_\lambda^d=\sum_{\lambda \in \Lambda}g_\lambda \psi_\lambda^d,\;\;\;\text{with}\;\;g_\lambda=\langle g,\psi_\lambda^d\rangle,$$
where we have set $j=-1$ in order to incorporate the low frequency
part of the decomposition and $\langle g,\psi_\lambda^d\rangle = \int g\psi_\lambda^d$ denotes the inner product in $L^2(\R^d)$. From now on, the basis
$(\psi_{\lambda}^d)_\lambda$ is fixed. 
For $s>0$ and $\pi\in (0,\infty]$, $g$ belongs to $B_{\pi,\infty}^s({\mathcal D})$ if the following norm is finite:
\begin{equation} \label{besovanalysis}
\|g\|_{B^s_{\pi,\infty}({\mathcal D})}= \sup_{j \geq
-1}2^{j(s+d(1/2-1/\pi))}\big(\sum_{\lambda\in
\Lambda_j}|\langle g, \psi_\lambda^d\rangle|^\pi\big)^{1/\pi}
\end{equation}
with the usual modification if $\pi=\infty$. Precise connection between this definition of Besov norm and more standard ones can be found in \cite{Co}. Given a basis $(\psi_\lambda^d)_\lambda$, there exists $\sigma>0$ such that for $\pi\ge1$ and $s\le \sigma$ the Besov space defined by \eqref{besovanalysis} exactly matches the
usual definition in terms of moduli of smoothness for~$g$. The index $\sigma$ can be taken arbitrarily large. The additional properties of the wavelet basis
$(\psi_\lambda^d)_\lambda$ that we need are summarized in the next
assumption. 
\begin{assumption}  \label{AssumptionA}
For $p \geq 1$, 
\begin{equation} \label{scaling psi}
\|\psi_\lambda^d\|_{L^p}^p
\sim 2^{|\lambda|d(p/2-1)},
\end{equation}
for some $\sigma >0$ and for all $s \le \sigma$, $j_0  \ge 0$,
\begin{equation}\label{E:std_approx_besov}
\|g-\sum_{j \le j_0} \sum_{\lambda\in \Lambda_j}
g_{\lambda}\psi_{\lambda}^d\|_{L^p} \lesssim 2^{-{j_0} s}
\norme{g}_{B_{p,\infty}^s(\Dd)},
\end{equation}
for any subset $\Lambda_0 \subset \Lambda$,
\begin{equation} \label{superconcentration}
\int_{{\mathcal D}}\big(\sum_{\lambda \in \Lambda_0}|\psi_{\lambda}^d(x)|^2\big)^{p/2}dx \sim \sum_{\lambda \in \Lambda_0} \|\psi_\lambda^d\|_{L^p}^p.
\end{equation}
If $p > 1$, for any sequence $(u_\lambda)_{\lambda \in \Lambda}$,
\begin{equation} \label{unconditional}
 \big\|\big(\sum_{\lambda \in \Lambda}|u_\lambda \psi_{\lambda}^d|^2\big)^{1/2}\big\|_{L^p} \sim \|\sum_{\lambda \in \Lambda} u_\lambda \psi_\lambda^d\|_{L^p}.
\end{equation}
\end{assumption}
\noindent The symbol $\sim$ means inequality in both ways, up to a
constant depending on $p$ and ${\mathcal D}$ only. The property
\eqref{E:std_approx_besov} reflects that our definition
\eqref{besovanalysis} of Besov spaces matches the definition in term
of linear approximation. Property \eqref{unconditional} reflects an
unconditional basis property, see Kerkyacharian and Picard \cite{KP}, De Vore {\it et al.} \cite{DKT} and
\eqref{superconcentration} is referred to as a superconcentration
inequality, or Temlyakov property \cite{KP}. The formulation of \eqref{superconcentration}-\eqref{unconditional} in the context of statistical
estimation is posterior to the original papers of Donoho and
Johnstone \cite{DJ, DJ2} and Donoho {\it et al.} \cite{DJKP, DJKP2}
and is due to Kerkyacharian and Picard \cite{KP}.
The existence of compactly supported wavelet bases satisfying Assumption~\ref{AssumptionA} is discussed in Meyer
\cite{Meyer}, see also Cohen \cite{Co}.

\subsection{Estimation of the invariant density $\nu$} \label{sec:stat_nu}
Recall that we estimate $x \leadsto \nu(x)$ for $x\in \mathcal D$, taken as a compact interval in $\mathcal S \subseteq \R$. 
We approximate the representation 
$$\nu(x) = \sum_{\lambda \in \Lambda}\nu_\lambda \psi_\lambda^1(x),\;\;\nu_\lambda = \langle \nu,\psi_\lambda^1\rangle$$
by
$$\widehat \nu_n(x) = \sum_{|\lambda| \leq J} \widehat \nu_{\lambda, n}\psi_\lambda^1(x),$$
with
$$\widehat \nu_{\lambda, n} =  
\mathcal T_{\lambda,\eta}\Big(\frac{1}{|\TT_n|} \sum_{u\in\TT_n} \psi_\lambda^1(X_u)\Big),
$$
and 
$\mathcal T_{\lambda,\eta}(x) = x {\bf 1}_{|x| \geq \eta}$ denotes the standard threshold operator (with $\mathcal T_{\lambda,\eta}(x)=x$ for the low frequency part when $\lambda \in \Lambda_{-1}$). Thus $\widehat \nu_n$ is specified by the maximal resolution level $J$ and the threshold $\eta$.
\begin{thm}\label{thm:ratenu} 
Work under Assumptions~\ref{ass:densityq} and ~\ref{ass:ergq} with $\mathfrak n(dx)=dx$. Specify $\widehat \nu_n$ with $$J= \log_2\frac{|\mathbb T_n|}{\log |\mathbb T_n|}\;\; {\rm and} \;\; \eta = c\sqrt{\log |\mathbb T_n|/|\mathbb T_n|}$$ for some $c>0$.
For every $\pi \in (0,\infty], s >1/\pi$ and $p \geq 1$, for large enough $n$ and $c$, the following estimate holds
$$
\Big(\EE \big[ \|  \widehat{\nu}_n - \nu \|_{L^p(\Dd)}^p \big]\Big)^{1/p} \lesssim \Big(\frac{\log |\mathbb T_n|}{|\mathbb T_n|}\Big)^{\alpha_1(s,p,\pi)},$$
with $\alpha_1(s,p,\pi) = \min\big\{ \tfrac{s}{2s+1} , \tfrac{s+1/p-1/\pi}{2s+1-2/\pi}\big\}$,  up to a constant that depends on $s,p,\pi, \|\nu\|_{\Bb_{\pi,\infty}^{s}(\mathcal D)}$, $\rho$, $R$ and $|\Qq|_\mathcal D$ and that is continuous in its arguments.
\end{thm}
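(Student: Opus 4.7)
The plan is to follow the by-now classical template for wavelet adaptive density estimation (Donoho \textit{et al.} \cite{DJKP2}, Kerkyacharian and Picard \cite{KP}), with Theorem~\ref{thm:1step}\,(ii) replacing the usual i.i.d.\ Bernstein bound. First, I decompose
$$
\widehat \nu_n - \nu \;=\; \sum_{|\lambda|\le J} \big(\widehat \nu_{\lambda,n} - \nu_\lambda\big)\psi_\lambda^1 \; - \; \sum_{|\lambda|> J} \nu_\lambda\,\psi_\lambda^1.
$$
The second (deterministic, bias) sum is handled via \eqref{E:std_approx_besov} combined with the Besov embedding $B^s_{\pi,\infty}(\mathcal D)\hookrightarrow B^{s-(1/\pi-1/p)_+}_{p,\infty}(\mathcal D)$ (active only when $p>\pi$); since $2^{-J}\sim \log|\mathbb T_n|/|\mathbb T_n|$ and $s>1/\pi$, its $L^p(\mathcal D)$-norm is of order $(\log|\mathbb T_n|/|\mathbb T_n|)^{s-(1/\pi-1/p)_+}$, which always dominates $(\log|\mathbb T_n|/|\mathbb T_n|)^{\alpha_1(s,p,\pi)}$.

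For the stochastic term I would use the unconditional basis property \eqref{unconditional} to rewrite its $L^p$-norm as a square function over $\lambda$, and then perform the standard Kerkyacharian--Picard four-cases split at each level $j\le J$. Writing $\widehat d_\lambda$ for $\widehat \nu_{\lambda,n}$, the contribution of level $j$ decomposes as
$$
\widehat d_\lambda\,\mathbf 1_{|\widehat d_\lambda|\ge \eta} - \nu_\lambda
\;=\; (\widehat d_\lambda - \nu_\lambda)\mathbf 1_{A_\lambda^{1}\cup A_\lambda^{2}} \;-\; \nu_\lambda\,\mathbf 1_{A_\lambda^{3}\cup A_\lambda^{4}},
$$
with the usual large/small and kept/discarded events $A_\lambda^{i}$, so that $A_\lambda^{2}$ and $A_\lambda^{3}$ are both included in $\{|\widehat d_\lambda - \nu_\lambda|\ge \eta/2\}$ while $A_\lambda^{1}$ enforces $|\nu_\lambda|\ge \eta/2$ and $A_\lambda^{4}$ enforces $|\nu_\lambda|\le 2\eta$. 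The deterministic pieces coming from $A_\lambda^{1}$ and $A_\lambda^{4}$ are bounded against the Besov constraint exactly as in \cite{KP}, producing the exponent $\alpha_1(s,p,\pi)$ via the usual dense/sparse dichotomy ($p\le \pi$ vs.\ $p>\pi$). The ``wrong'' events $A_\lambda^{2}, A_\lambda^{3}$ are made negligible once one shows
$$
\PP\big(|\widehat d_\lambda - \nu_\lambda|\ge \eta/2\big) \;\le\; |\mathbb T_n|^{-K}
$$
with $K$ as large as desired, combined with a crude $L^{2p}$-moment bound on $|\widehat d_\lambda-\nu_\lambda|$ applied through Cauchy--Schwarz.

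The crux is this last concentration estimate, which is where Theorem~\ref{thm:1step}\,(ii) enters. Applied to $g=\psi_\lambda^1$ it gives
$$
\PP\big(|\widehat d_\lambda - \nu_\lambda|\ge \delta\big) \;\le\; 2\exp\Big(-\frac{|\mathbb T_n|\,\delta^2}{\kappa_3 \Sigma_{1,n}(\psi_\lambda^1) + \kappa_4\, |\psi_\lambda^1|_\infty\, \delta}\Big).
$$
By the scaling \eqref{scaling psi}, $|\psi_\lambda^1|_\infty^2\sim 2^j$, $|\psi_\lambda^1|_2^2\sim 1$, $|\psi_\lambda^1|_1^2\sim 2^{-j}$, and the choice $\ell=j$ in \eqref{def sigma 1n}, admissible for all $j\le J\le n$, yields $\Sigma_{1,n}(\psi_\lambda^1)\lesssim 1$ uniformly in $\lambda$. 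Plugging $\delta=\eta/2=\tfrac{c}{2}\sqrt{\log|\mathbb T_n|/|\mathbb T_n|}$ and using $2^{J/2}\sqrt{\log|\mathbb T_n|/|\mathbb T_n|}\lesssim 1$ at the maximal resolution, the denominator in the exponent is of order $1$ and one obtains a bound $2\exp(-c_0 c^2 \log|\mathbb T_n|)$ with universal $c_0$; the lower-threshold constraint $\delta\ge 4R(1-2\rho)^{-1}|\psi_\lambda^1|_\infty/|\mathbb T_n|$ of Theorem~\ref{thm:1step}\,(ii) is satisfied under the same range. Taking $c$ large enough closes this step. I expect the main difficulty to lie not in the concentration step but in the careful bookkeeping of the four-case analysis across the dense ($p\le \pi$) and sparse ($p>\pi$) regimes so as to produce \emph{exactly} the exponent $\alpha_1(s,p,\pi)$ while tracking continuity of the multiplicative constant in $(s,p,\pi,\|\nu\|_{\mathcal B^s_{\pi,\infty}(\mathcal D)},\rho,R,|\Qq|_{\mathcal D})$.
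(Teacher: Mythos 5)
Your proposal is correct and follows essentially the same route as the paper: the crux in both is the application of Theorem~\ref{thm:1step}\,(ii) to $g=\psi_\lambda^1$, with the uniform bound $\Sigma_{1,n}(\psi_\lambda^1)\lesssim 1$ obtained from the choice $\ell\sim|\lambda|$ in \eqref{def sigma 1n} and the verification of the threshold condition at $\delta\sim\sqrt{\log|\TT_n|/|\TT_n|}$, yielding the deviation bound $\lesssim c(n)^{2p}$ and the attendant moment bounds. The only difference is presentational: the paper feeds these two estimates into Theorem 5.1, Corollary 5.1 and Theorem 6.1 of Kerkyacharian and Picard as a black box, whereas you unfold that machinery (bias/stochastic split, unconditionality, four-case thresholding analysis) by hand.
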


Two remarks are in order:\\

\noindent {\bf 1)} The upper-rate of convergence is the classical minimax rate in density estimation. We infer that our estimator is nearly optimal in a minimax sense as follows from Theorem 2 in Donoho {\it et al.} \cite{DJKP} applied to the class $\Qq(x,y)dy=\nu(y)dy$, {\it i.e.} in the particular case when we have i.i.d. $X_u$'s. We highlight the fact that $n$ represents here the number of observed generations in the tree, which means that we observe $|\TT_n| = 2^{n+1}-1$ traits.

\noindent {\bf 2)} The estimator $\widehat \nu_n$ is {\it smooth-adaptive} in the following sense: for every $s_0>0$, $0 < \rho_0 < 1/2$, $R_0>0$ and $\Qq_0>0$, define the sets
$
\mathcal A(s_0)  = \{(s, \pi), s\geq s_0, s_0 \geq 1/\pi\}$
and
$$\Qq(\rho_0, R_0,\Qq_0) = \{\Qq\;\text{such that}\; \rho \leq \rho_0, R\leq R_0,|\Qq|_\mathcal D, \leq \Qq_0\},$$
where $\Qq$ is taken among mean transitions for which Assumption~\ref{ass:ergq} holds. Then,
for every $C>0$, there exists $c^\star = c^\star(\mathcal D, p, s_0, \rho_0, R_0, \Qq_0, C)$ such that $\widehat \nu_n$ specified with $c^\star$ satisfies
$$\sup_n \sup_{(s, \pi) \in \mathcal A(s_0)}\sup_{\nu,\Qq}
 \Big(\frac{|\mathbb T_n|}{\log |\mathbb T_n|}\Big)^{p\alpha_1(s,p,\pi)}\EE \big[ \|  \widehat{\nu}_n - \nu \|_{L^p(\Dd)}^p \big] <\infty$$
where the supremum is taken among $(\nu,\Qq)$ such that $\nu \Qq=\nu$ with $\Qq \in \Qq(\rho_0, R_0,\Qq_0)$ and $\|\nu\|_{\Bb_{\pi,\infty}^{s}(\mathcal D)} \\ \leq C$. In particular, $\widehat \nu_n$ achieves the (near) optimal rate of convergence over Besov balls simultaneously for all $(s,\pi) \in \mathcal A(s_0)$.  
Analogous smoothness adaptive results hold for Theorems~\ref{thm:rateq},~\ref{thm:rateP} and~\ref{prop:rateB} below.

\subsection{Estimation of the density of the mean transition $\Qq$} 

In this section we estimate $(x,y)\leadsto \Qq(x,y)$ for $(x,y) \in \mathcal D^2$  and $\mathcal D$ is a compact interval in $\mathcal S \subseteq \R$.
In a first step, we estimate the density 
$$f_\Qq(x, y) = \nu(x) \Qq(x,y)$$
of the distribution of $(X_{u^-} , X_u)$ when $\mathcal L(X_\emptyset)=\nu$ (a restriction we do not need here) by
$$
\widehat{f}_n(x,y) = \sum_{|\lambda| \leq J } \widehat{f}_{\lambda,n}\psi_\lambda^2(x,y),$$
with
$$\widehat{f}_{\lambda,n} = \mathcal T_{\lambda,\eta}\Big(\frac{1}{|\TT_n^\star|} \sum_{u\in\TT_n^\star} \psi_\lambda^2(X_{u^-},X_u)\Big),
$$
and $\mathcal T_{\lambda,\eta}(\cdot)$ is the hard-threshold estimator defined in Section~\ref{sec:stat_nu} and $\mathbb T_n^\star = \mathbb T_n \setminus \mathbb G_0$.
We can now estimate the density $\Qq(x,y)$ of the mean transition probability by 
\begin{equation} \label{Pphat}
\widehat{\Qq}_n(x,y) = \frac{\widehat{f}_n(x,y)}{\max\{\widehat{\nu}_n(x),\varpi\}}
\end{equation}
for some threshold $\varpi >0$. Thus the estimator $\widehat{\Qq}_n$ is specified by $J$, $\eta$ and $\varpi$.
Define also
\begin{equation} \label{eq:minornu}
m(\nu)= \inf_{x}\nu(x) 
\end{equation}
where the infimum is taken among all $x$ such that $(x,y)\in \mathcal D^2$ for some $y$.

\begin{thm} \label{thm:rateq}
Work under Assumptions~\ref{ass:densityq} and~\ref{ass:ergq} with $\mathfrak n(dx)=dx$. Specify $\widehat \Qq_n$ with 
$$J= \tfrac{1}{2}\log_2\frac{|\mathbb T_n|}{\log |\mathbb T_n|}   \;\; {\rm and} \;\;  \eta = c\sqrt{(\log |\mathbb T_n|)^2/|\mathbb T_n|}$$ for some $c>0$ and $\varpi >0.$
For every $\pi \in [1,\infty], s>2/\pi$ and $p \geq 1$, for  large enough $n$ and $c$ and small enough $\varpi$, the following estimate holds
\begin{equation} \label{upperq}
\Big(\EE \big[ \|  \widehat{\Qq}_n - \Qq \|_{L^p(\Dd^2)}^p \big] \Big)^{1/p}\lesssim \Big(\frac{(\log |\mathbb T_n|)^2}{|\mathbb T_n|}\Big)^{\alpha_2(s,p,\pi)},
\end{equation}
with $\alpha_2(s,p,\pi) = \min\big\{ \tfrac{s}{2s+2} , \tfrac{s/2+1/p-1/\pi}{s+1-2/\pi}\big\}$, provided $m(\nu) \geq \varpi >0$ and
up to a constant that depends on $s,p,\pi, \|\Qq\|_{\Bb_{\pi,\infty}^{s}(\mathcal D^2)}$, $m(\nu)$ and that is continuous in its arguments.\\

This rate is moreover (nearly) optimal: define $\varepsilon_2 =  s\pi- (p-\pi) $. We have
$$
\inf_{ \widehat{\Qq}_n} \sup_{\Qq} \Big(\EE \big[ \| \widehat{\Qq}_n - \Qq \|^p_{L^p(\mathcal D^2)} \big]\Big)^{1/p} \gtrsim 
\left\{
\begin{array}{lll}
|\TT_n|^{-\alpha_2(s,p,\pi) } & \text{if} & \varepsilon_2 > 0 \\
\displaystyle \Big(\frac{\log|\mathbb T_n|}{|\mathbb T_n|}\Big)^{\alpha_2(s,p,\pi)  }& \text{if} & \varepsilon_2 \leq 0, \\
\end{array}
\right.
$$
where the infimum is taken among all estimators of $\Qq$ based on $(X_u)_{u\in\TT_n}$ and the supremum is taken among all $\Qq$ such that  $\|\Qq\|_{\Bb_{\pi,\infty}^{s}(\mathcal D^2)} \leq C$ and $m(\nu)\geq C'$ for some $C,C'>0$.
\end{thm}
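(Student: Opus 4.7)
\textbf{Upper bound, step 1: reduction to $\widehat{f}_n$.} On $\mathcal D$ we have $\nu\geq m(\nu)\geq\varpi$, so $\nu=\max\{\nu,\varpi\}$ there; the $1$-Lipschitz property of $t\mapsto\max\{t,\varpi\}$ then gives the pointwise decomposition on $\mathcal D^2$
\begin{equation*}
|\widehat{\Qq}_n - \Qq| \;\leq\; \varpi^{-1}|\widehat{f}_n - f_\Qq| \;+\; \varpi^{-2}|f_\Qq|\cdot|\widehat{\nu}_n - \nu|.
\end{equation*}
Integrating in $y$, the last term contributes at most $C\|\widehat{\nu}_n-\nu\|_{L^p(\mathcal D)}$, which by Theorem~\ref{thm:ratenu} is $\lesssim (\log|\mathbb T_n|/|\mathbb T_n|)^{\alpha_1(s,p,\pi)}$, strictly faster than the two-dimensional target. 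The task thus reduces to establishing $(\EE\|\widehat{f}_n - f_\Qq\|_{L^p(\mathcal D^2)}^p)^{1/p}\lesssim ((\log|\mathbb T_n|)^2/|\mathbb T_n|)^{\alpha_2(s,p,\pi)}$.

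\textbf{Upper bound, step 2: rate for $\widehat{f}_n$.} Reorganising the $\mathbb T_n^\star$-sum by parents,
\begin{equation*}
\widetilde{f}_{\lambda,n} \;=\; \frac{1}{2|\mathbb T_{n-1}|}\sum_{v\in\mathbb T_{n-1}} g_\lambda(X_v,X_{v0},X_{v1}),\qquad g_\lambda(x,y,z):=\psi_\lambda^2(x,y)+\psi_\lambda^2(x,z),
\end{equation*}
so that $\int \Ttransition g_\lambda\,d\nu = 2\langle f_\Qq,\psi_\lambda^2\rangle$. Routine computations using Assumption~\ref{ass:densityq} and the scaling \eqref{scaling psi} give $\Sigma_{2,n}(g_\lambda)\lesssim 1$ and $|g_\lambda|_\infty\lesssim 2^{|\lambda|}$ uniformly in $\lambda$. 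Theorem~\ref{thm:triplets}(ii) then delivers a Bernstein-type deviation for $\widetilde{f}_{\lambda,n}-\langle f_\Qq,\psi_\lambda^2\rangle$ with effective sample size $|\mathbb T_n|/n\sim|\mathbb T_n|/\log|\mathbb T_n|$; the threshold $\eta\sim\log|\mathbb T_n|/\sqrt{|\mathbb T_n|}$ matches this noise level and, for $c$ large, ensures $\PP(|\widetilde{f}_{\lambda,n}-\langle f_\Qq,\psi_\lambda^2\rangle|>\eta/2)\leq|\mathbb T_n|^{-\gamma}$ with arbitrary $\gamma>0$. Plugging this into the Kerkyacharian--Picard thresholding framework \cite{KP} (together with $f_\Qq\in B^s_{\pi,\infty}(\mathcal D^2)$, inherited from $\Qq$ via the invariance identity $\nu=\nu\Qq$ and standard product estimates in Besov spaces when $s>2/\pi$) yields the claimed rate. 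The $(\log|\mathbb T_n|)^2$ factor in the rate originates precisely from the $n^{-1}$ loss in Theorem~\ref{thm:triplets}(ii).

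\textbf{Lower bound.} Following the hypothesis-testing approach of Hoffmann \cite{H4} and Lacour \cite{Lacour1,Lacour2}, I construct for each resolution level $j^\star$ a hypercube $\{\Qq_\theta\}_{\theta\in\{0,1\}^{M_{j^\star}}}$ of the form
\begin{equation*}
\Qq_\theta(x,y) \;=\; \Qq_0(x,y) \;+\; \epsilon\sum_{\lambda\in\Lambda_{j^\star}}\theta_\lambda\,\phi_\lambda(x)\chi_\lambda(y),
\end{equation*}
with $\Qq_0$ a uniformly mixing base (say uniform on $\mathcal D^2$), $\phi_\lambda$ a rescaled bump, and $\chi_\lambda$ a wavelet with $\int\chi_\lambda=0$ (so $\int\Qq_\theta(x,y)dy=1$ is preserved). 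Choosing $\epsilon$ small enough keeps Assumption~\ref{ass:ergq} uniform over $\theta$ and $\|\Qq_\theta\|_{B^s_{\pi,\infty}(\mathcal D^2)}\leq C$. Lifting $\Qq_\theta$ to a $\mathbb T$-transition $\Ttransition_\theta$ (for instance via the symmetric product $\Pp_0=\Pp_1=\Qq_\theta$) factorises the tree likelihood as $\prod_{v\in\mathbb T_{n-1}}\Ttransition_\theta(X_v,X_{v0},X_{v1})$, so that the Kullback--Leibler divergence between $\Qq_\theta$ and $\Qq_{\theta'}$ differing in a single coordinate is of order $\epsilon^2$ and the total tree-divergence is $\lesssim|\mathbb T_n|\epsilon^2$. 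Calibrating $2^{j^\star}$ and $\epsilon$ to $(s,p,\pi)$ and applying Assouad's lemma in the dense regime $\varepsilon_2>0$, respectively a Fano-type bound with a sparse prior in the regime $\varepsilon_2\leq 0$, yields the matching minimax lower bound.

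\textbf{Main obstacle.} The most delicate point in the upper bound is tracking the $n^{-1}$ loss in Theorem~\ref{thm:triplets}(ii) through the Kerkyacharian--Picard decomposition: the inflated threshold $\eta$ must simultaneously dominate the noise (to suppress small coefficients reliably) and remain small enough not to destroy the bias--variance trade-off, with the $(\log|\mathbb T_n|)^2$ factor correctly propagated into the final rate. In the lower bound, the corresponding delicate point is ensuring that the perturbed kernels $\Qq_\theta$ remain uniformly geometrically ergodic with the same rate $\rho<1/2$ throughout the family, which constrains both the choice of base $\Qq_0$ and the amplitude $\epsilon$.
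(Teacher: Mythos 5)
Your argument is correct and follows the paper's own route essentially step for step: the quotient reduction to $\widehat f_n$ combined with Theorem~\ref{thm:ratenu} is the paper's Step 2, and your symmetrisation $g_\lambda(x,y,z)=\psi_\lambda^2(x,y)+\psi_\lambda^2(x,z)$ fed into Theorem~\ref{thm:triplets}\,(ii) is exactly the content of the paper's Lemma~\ref{two step}, after which the Kerkyacharian--Picard machinery with the inflated threshold $\eta\sim\log|\TT_n|/\sqrt{|\TT_n|}$ is applied identically; you also correctly locate the source of the extra logarithm in the $n^{-1}$ loss of Theorem~\ref{thm:triplets}\,(ii). The lower bound is likewise the paper's hypercube/Assouad (dense) and sparse two-point argument with the tree factorisation of the likelihood. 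The one place you deviate is the perturbation $\phi_\lambda(x)\chi_\lambda(y)$ with $\phi_\lambda$ a non-mean-zero bump: this preserves $\int\Qq_\theta(x,y)\,dy=1$ but not the invariance of the uniform measure, so the invariant measures $\nu_\theta$ drift with $\theta$ and the clean stationary Kullback--Leibler identity the paper exploits (each transition contributing $\int(\Qq_{\theta'}/\Qq_\theta-1)^2\,\Qq_\theta\,\nu(dx)\,dy$ with one common $\nu$, which also makes $m(\nu)\geq C'$ automatic) no longer applies verbatim. The paper avoids this by perturbing with a genuine two-dimensional wavelet $\psi_\lambda^2$, i.e.\ a tensor of mean-zero factors, so that every vertex of the hypercube keeps the uniform invariant measure; with your construction you would either need to take $\phi_\lambda$ mean-zero as well, or control the $\theta$-dependence of $\nu_\theta$ along each branch, which is doable but adds work without any gain.
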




\subsection{Estimation of the density of the $\TT$-transition $\Ttransition$} \label{sec:stat_P}


In this section we estimate $(x,y,z)\leadsto \Ttransition(x,y,z)$ for $(x,y,z) \in \mathcal D^3$  and $\mathcal D$ is a compact interval in $\mathcal S \subseteq \R$.
In a first step, we estimate the density 
$$f_\Ttransition(x, y,z) = \nu(x) \Ttransition(x,y,z)$$ of the distribution of $(X_u,X_{u0},X_{u1})$  (when $\mathcal L(X_\emptyset)=\nu$) by
$$
\widehat{f}_n(x,y,z) = \sum_{|\lambda| \leq J } \widehat{f}_{\lambda,n}\psi_\lambda^3(x,y,z),$$
with
$$\widehat{f}_{\lambda,n} = \mathcal T_{\lambda,\eta}\Big(\frac{1}{|\TT_{n-1}|} \sum_{u\in\TT_{n-1}} \psi_\lambda^3(X_u, X_{u0},X_{u1})\Big),
$$
and $\mathcal T_{\lambda,\eta}(\cdot)$ is the hard-threshold estimator defined in Section~\ref{sec:stat_nu}.
In the same way as in the previous section, we can next estimate the density $\Ttransition$ of the $\mathbb T$-transition by
\begin{equation} \label{Pphat}
\widehat{\Ttransition}_n(x,y,z) = \frac{\widehat{f}_n(x,y,z)}{\max\{\widehat{\nu}_n(x), \varpi\}}
\end{equation}
for some threshold $\varpi >0$. Thus the estimator $\widehat{\Ttransition}_n$ is specified by $J$, $\eta$ and $\varpi$.

\begin{thm} \label{thm:rateP}
Work under Assumptions~\ref{ass:densityq},~\ref{ass:ergq} and~\ref{densityTtrans}. Specify $\widehat \Ttransition_n$ with $$J= \tfrac{1}{3}\log_2\frac{|\mathbb T_n|}{\log |\mathbb T_n|}\;\; {\rm and} \;\; \eta = c\sqrt{(\log |\mathbb T_n|)^2/|\mathbb T_n|}$$ for some $c>0$ and $\varpi >0$.
For every  $\pi \in [1,\infty], s>3/\pi$ and $p \geq 1$, for large enough $n$ and $c$ and small enough $\varpi$, the following estimate holds
\begin{equation} \label{upperq}
\Big(\EE \big[ \|  \widehat{\Ttransition}_n - \Ttransition \|_{L^p(\Dd^3)}^p \big]\Big)^{1/p} \lesssim \Big(\frac{(\log |\mathbb T_n|)^2}{|\mathbb T_n|}\Big)^{\alpha_3(s,p,\pi)},
\end{equation}
with $\alpha_3(s,p,\pi) = \min\big\{ \tfrac{s}{2s+3} , \tfrac{s/3+1/p-1/\pi}{2s/3+1-2/\pi}\big\}$, provided $m(\nu) \geq \varpi >0$ and
up to a constant that depends on $s,p,\pi, \|\Ttransition\|_{\Bb_{\pi,\infty}^{s}(\mathcal D^3)}$ and $m(\nu)$ and that is continuous in its arguments.\\

This rate is moreover (nearly) optimal: define $\varepsilon_3 =  \frac{s\pi}{3}- \frac{p-\pi}{2} $. We have
$$
\inf_{ \widehat{\Ttransition}_n} \sup_{\Ttransition} \Big(\EE \big[ \| \widehat{\Ttransition}_n - \Ttransition \|^p_{L^p(\mathcal D^3)} \big]\Big)^{1/p} \gtrsim 
\left\{
\begin{array}{lll}
|\TT_n|^{-\alpha_3(s,p,\pi) } & \text{if} & \varepsilon_3 > 0 \\
\displaystyle \Big(\frac{\log|\mathbb T_n|}{|\mathbb T_n|}\Big)^{\alpha_3(s,p,\pi)  }& \text{if} & \varepsilon_3 \leq 0, \\
\end{array}
\right.
$$
where the infimum is taken among all estimators of $\Ttransition$ based on $(X_u)_{u\in\TT_n}$ and the supremum is taken among all $\Ttransition$ such that  $\|\Ttransition\|_{\Bb_{\pi,\infty}^{s}(\mathcal D^3)} \leq C$ and $m(\nu)\geq C'$ for some $C,C'>0$.
\end{thm}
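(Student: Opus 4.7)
The plan is to transpose the argument of Theorem~\ref{thm:rateq}, replacing pairs $(X_{u^-},X_u)$ by triplets $(X_u, X_{u0}, X_{u1})$ and Theorem~\ref{thm:1step}(ii) by Theorem~\ref{thm:triplets}(ii). I would first reduce the control of $\widehat \Ttransition_n - \Ttransition$ to that of $\widehat f_n - f_\Ttransition$ via the quotient decomposition
\begin{align*}
\widehat{\Ttransition}_n - \Ttransition = \frac{\widehat f_n - f_\Ttransition}{\max\{\widehat{\nu}_n, \varpi\}} + \Ttransition\,\frac{\nu - \max\{\widehat{\nu}_n, \varpi\}}{\max\{\widehat{\nu}_n, \varpi\}}.
\end{align*}
On the event $\{\widehat\nu_n \geq \nu/2\}$, which has overwhelming probability as soon as $\varpi < m(\nu)$ by Theorem~\ref{thm:ratenu} applied with a sufficiently large moment, the second term is bounded by a multiple of $\|\widehat \nu_n - \nu\|_{L^p(\mathcal D)}$ and hence contributes a rate strictly faster than $\alpha_3(s,p,\pi)$, since the univariate density estimation rate of Theorem~\ref{thm:ratenu} dominates the trivariate one. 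The bulk of the work is therefore the $L^p(\mathcal D^3)$-risk bound for $\widehat f_n$.

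For the wavelet thresholding estimator $\widehat f_n$ of the triplet density $f_\Ttransition$, I would apply the generic Kerkyacharian--Picard maximal inequality \cite{KP}, which reduces matters to two probabilistic inputs: a moment bound
\begin{align*}
\EE\Big[\big(\widehat f_{\lambda,n}^\circ - \langle f_\Ttransition,\psi_\lambda^3\rangle\big)^{2p}\Big] \lesssim |\TT_{n-1}|^{-p},
\end{align*}
where $\widehat f_{\lambda,n}^\circ$ denotes the unthresholded empirical coefficient, and an exponential concentration
\begin{align*}
\PP\Big(\big|\widehat f_{\lambda,n}^\circ - \langle f_\Ttransition,\psi_\lambda^3\rangle\big| \geq \tfrac{\eta}{2}\Big) \lesssim |\TT_n|^{-\gamma}
\end{align*}
for arbitrarily large $\gamma$ upon choosing $c$ large. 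The second bound is precisely Theorem~\ref{thm:triplets}(ii) applied to $g=\psi_\lambda^3$, and the first follows by integration combined with standard Rosenthal-type estimates for BMC in~\cite{BDG14}. Crucially, the slow factor $n^{-1}\asymp (\log|\TT_n|)^{-1}$ appearing in Theorem~\ref{thm:triplets}(ii) is precisely what forces the threshold at level $\eta\asymp \sqrt{(\log|\TT_n|)^2/|\TT_n|}$, and is the source of the extra logarithmic factor in the rate, compared to the one-dimensional case.

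The main technical obstacle is the sharp control of the variance functional $\Sigma_{2,n-1}(\psi_\lambda^3)$ of \eqref{eq:sigma2}. Using Assumption~\ref{densityTtrans} together with the wavelet scaling \eqref{scaling psi}, one obtains $|\Ttransition(\psi_\lambda^3)^2|_1 \lesssim |\Ttransition|_{\mathcal D,1}\|\psi_\lambda^3\|_\infty^2$ and similar bounds for $|\Ttransition\psi_\lambda^3|_1$ and $|\Ttransition\psi_\lambda^3|_\infty$; optimising over the free index $\ell$ in~\eqref{eq:sigma2} at the maximal resolution $J\asymp \tfrac{1}{3}\log_2(|\TT_n|/\log|\TT_n|)$ then gives the concentration bound for large $c$. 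The standard dense/sparse wavelet decomposition over the Besov ball $B^s_{\pi,\infty}(\mathcal D^3)$, combined with the above two ingredients, then yields the announced upper bound at rate $\big((\log|\TT_n|)^2/|\TT_n|\big)^{\alpha_3(s,p,\pi)}$.

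For the lower bound I would follow the classical Assouad/Fano argument adapted to density estimation with wavelets (Donoho \textit{et al.} \cite{DJKP}, Hoffmann \cite{H4}, Lacour \cite{Lacour1,Lacour2}). Fix a reference transition $\Ttransition_0$ supported on $\mathcal D^3$ with invariant density bounded below by $C'$, and perturb it as $\Ttransition_\varepsilon = \Ttransition_0 + \gamma_j \sum_{\lambda\in \Lambda_j^\star}\varepsilon_\lambda \psi_\lambda^3$ with $\varepsilon\in\{-1,+1\}^{\Lambda_j^\star}$, and $\gamma_j, \Lambda_j^\star$ chosen so that $\Ttransition_\varepsilon$ remains a positive transition density (this is why one imposes the wavelet-type cancellation $\int \psi_\lambda^3(x,\cdot,\cdot)=0$ and takes $\gamma_j$ small enough) lying inside the Besov ball of radius $C$. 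The log-likelihood ratio between $\PP_{\Ttransition_\varepsilon}$ and $\PP_{\Ttransition_{\varepsilon'}}$ on $\mathbb X^n$ telescopes along the binary tree into a sum indexed by $\TT_{n-1}$, giving a chi-square divergence of order $|\TT_{n-1}|\gamma_j^2 \rho(\varepsilon,\varepsilon')$ for the Hamming distance $\rho$. Assouad's lemma then yields $|\TT_n|^{-\alpha_3(s,p,\pi)}$ in the dense regime $\varepsilon_3>0$; in the sparse regime $\varepsilon_3\leq 0$ one resorts to a single-bump two-hypotheses test with $\gamma_j\asymp \sqrt{\log|\TT_n|/|\TT_n|}$, producing the announced extra logarithmic factor.
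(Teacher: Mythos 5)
Your architecture coincides with the paper's: quotient decomposition reducing $\widehat{\Ttransition}_n-\Ttransition$ to $\widehat f_n-f_\Ttransition$ plus a $\widehat\nu_n-\nu$ term (the paper does not even need your event $\{\widehat\nu_n\geq\nu/2\}$, since $\max\{\widehat\nu_n,\varpi\}\geq\varpi$ deterministically and $m(\nu)\geq\varpi$ gives $|\max\{\widehat\nu_n,\varpi\}-\nu|\leq|\widehat\nu_n-\nu|$ on $\Dd$); Kerkyacharian--Picard reduction to a coefficientwise moment bound and a deviation bound; the deviation bound from Theorem~\ref{thm:triplets}\,(ii) applied to $g=\psi_\lambda^3$, with the $n^{-1}$ loss as the source of the extra logarithm; and dense/sparse hypercubes of perturbed $\TT$-transitions with a tree-telescoping likelihood ratio for the lower bound. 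Two minor deviations are harmless but inexact: the paper obtains the moment bound by integrating the tail of the deviation inequality rather than by Rosenthal-type estimates, and what this yields (and what suffices) is $\lesssim c(n)^{2p}$ with $c(n)^2=n\log|\TT_n|/|\TT_n|$, not the stronger $|\TT_{n-1}|^{-p}$ you assert.

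There is, however, a genuine gap at the step you yourself single out as the main obstacle. Your bound $|\Ttransition(\psi_\lambda^3)^2|_1\lesssim|\Ttransition|_{\Dd,1}\,\|\psi_\lambda^3\|_\infty^2$ is too crude to conclude: by \eqref{scaling psi}, $\|\psi_\lambda^3\|_\infty^2\sim 2^{3|\lambda|}$, which at the maximal resolution $3J_n\approx\log_2(|\TT_n|/\log|\TT_n|)$ is of order $|\TT_n|/\log|\TT_n|$, and this term enters $\Sigma_{2,n-1}$ in \eqref{eq:sigma2} additively, with no $2^{-\ell}$ damping available, so no optimisation over $\ell$ can repair it. With $\Sigma_{2,n-1}(\psi_\lambda^3)\asymp 2^{3|\lambda|}$ and $\delta_n\asymp\sqrt{n\log|\TT_n|/|\TT_n|}$, the exponent in Theorem~\ref{thm:triplets}\,(ii) becomes
\begin{equation*}
\frac{n^{-1}|\TT_{n-1}|\,\delta_n^2}{\kappa_1\Sigma_{2,n-1}(\psi_\lambda^3)+\kappa_2|\psi_\lambda^3|_\infty\delta_n}\asymp\frac{\log|\TT_n|}{2^{3|\lambda|}}\longrightarrow 0,
\end{equation*}
so the concentration degenerates to $\exp(-o(1))$ at the resolutions that matter. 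The whole point of the variance functional $\Sigma_{2,n}$ --- and of the remark following Theorem~\ref{thm:triplets} --- is to avoid precisely this sup-norm loss: what one must check (and what the paper checks, using Assumption~\ref{densityTtrans} and Assumption~\ref{ass:densityq}) is $|\Ttransition(\psi_\lambda^3)^2|_1\lesssim|\Qq|_{\Dd}|\Ttransition|_{\Dd,1}|\psi_\lambda^3|_2^2=O(1)$, together with $|\Ttransition\psi_\lambda^3|_1\lesssim|\psi_\lambda^3|_1$ and $|\Ttransition\psi_\lambda^3|_\infty\lesssim|\psi_\lambda^3|_\infty$, so that $\Sigma_{2,n-1}(\psi_\lambda^3)\lesssim\Sigma_{1,n-1}(\psi_\lambda^3)=O(1)$ after choosing $2^\ell\sim 2^{3|\lambda|}$ (admissible since $2^{3J_n}\leq c(n)^{-2}\lesssim 2^{n-1}$). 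With that corrected variance control the rest of your argument goes through as in the paper.
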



\section{Applications} \label{sec:examples}

\subsection{Estimation of the size-dependent splitting rate in a growth-fragmentation model} \label{application DHKR1}

Recently, Doumic {\it et al.} \cite{DHKR1} have studied the problem of estimating nonparametrically the size-dependent splitting rate in
growth-fragmentation models (see {\it e.g.} the textbook of Perthame \cite{Perthame}). Stochastically, these are piecewise deterministic Marvov processes on trees that model the evolution of a population of cells or bacteria: 
to each node (or cell) $u \in \TT$, we associate as trait $X_u \in \mathcal S \subset (0,\infty)$ the size at birth of the cell $u$. The evolution mechanism is described as follows: each cell grows exponentially with a common rate $\tau > 0$. A cell of size $x$ splits into two newborn cells of size $x/2$ each (thus $X_{u0}=X_{u1}$ here), with a size-dependent splitting rate $B(x)$ for some $B:\mathcal S \rightarrow [0,\infty)$. Two newborn cells start a new life independently of each other.  If $\zeta_u$ denotes the lifetime of the cell $u$, we thus have
\begin{equation} \label{def B}
\PP\big( \zeta_u \in [t,t+dt) \big| \zeta_u\geq t, X_u=x\big) = B\big(x\exp(\tau t)\big)dt
\end{equation}
and
\begin{equation} \label{rel de base}
X_u = \tfrac{1}{2}X_{u^-} \exp(\tau \zeta_{u^-})
\end{equation}
so that \eqref{def B} and \eqref{rel de base} entirely determine the evolution of the population. We are interested in estimating $x \leadsto B(x)$ for $x \in \mathcal D$ where $\mathcal D \subset \mathcal S$ is a given compact interval. The process $(X_u)_{u \in \mathbb T}$ is a bifurcating Markov chain with state space $\mathcal S$ and $\mathbb T$-transition any version of
$$
\Pp_B(x,dy\,dz) = \PP\big(X_{u0} \in dy, X_{u1} \in dz \,| X_{u^-} = x \big).
$$
Moreover, using \eqref{def B} and \eqref{rel de base}, (see for instance the derivation of Equation $(11)$ in \cite{DHKR1}), it is not difficult to check that 
$$\Pp_B(x,dy\,dz) = Q_B(x,dy) \otimes \delta_y(dz)$$
where $\delta_y$ denotes the Dirac mass at $y$ and
\begin{equation} \label{Pp_B}
Q_B (x, dy) = \frac{B(2y)}{\tau y} \exp \Big( - \int_{x/2}^{y} \frac{B(2z)}{\tau z} dz \Big) {\bf 1}_{\{y \geq x/2\}}dy.
\end{equation}
If we assume moreover that $x \leadsto B(x)$ is continuous, then we have Assumption~\ref{ass:densityq} with $\Qq=Q_B$ and $\mathfrak n(dx)=dx$.\\

Now, let $\mathcal S$ be a bounded and open interval in $(0,\infty)$ such that $\sup \mathcal S > 2 \inf \mathcal S$. Pick $r \in \mathcal S$, $0 < L < \tau \log 2$ and introduce the function class 
$$\mathcal C(r,L)=\Big\{B:\mathcal S \rightarrow [0,\infty), \int^{\sup \mathcal S} \frac{B(x)}{x} dx = \infty,\;\; \int_{\inf \mathcal S}^{r} \frac{B(x)}{x} dx \leq L\Big\}.$$
By Theorem 1.3 in Hairer and Mattingly \cite{HairerMattingly} and the explicit representation \eqref{Pp_B} for $Q_B$, one can check that  for every $B \in \mathcal C(r,L)$, we have Assumption~\ref{ass:ergq} with $\Qq = Q_B$. In particular, we comply with the stringent requirement $\rho = \rho_B \leq C(r,L)$ for some $C(r,L) < 1/2$, {\it i.e.} uniformly over $\mathcal C(r,L)$. Finally, we know by Proposition 2 in Doumic {\it et al.} \cite{DHKR1} -- see in particular Equation $(24)$ -- that
 $$
B(x) = \frac{\tau x}{2} \frac{\nu_B(x/2)}{\int_{x/2}^y \nu_B(z) dz},
$$
where $\nu_B$ denotes the unique invariant probability of the transition $\Qq=Q_B$. This yields a strategy for estimating $x \leadsto B(x)$ via an estimator of $x \leadsto \nu_B(x)$. For a given compact interval $\mathcal D \subset \mathcal S$, define
\begin{equation} \label{Bhat}
\widehat{B}_n(x) = \frac{\tau x}{2} \frac{\widehat{\nu}_n(x/2)}{\big( \tfrac{1}{|\TT_n|}\sum_{u \in \TT_n}{\bf 1}_{\{x/2 \leq X_u < x\}} \big) \vee \varpi},
\end{equation} 
where $\widehat{\nu}_n$ is the wavelet thresholding estimator given in Section~\ref{sec:stat_nu} specified by a maximal resolution level $J$ and a threshold $\eta$ and $\varpi >0$.
As a consequence of Theorem~\ref{thm:ratenu} we obtain the following
\begin{thm}
\label{prop:rateB} 

Specify $\widehat B_n$ with $$J= \tfrac{1}{2}\log_2 \frac{|\mathbb T_n|}{\log |\mathbb T_n|} \;\; {\rm and} \;\; \eta = c\sqrt{\log |\mathbb T_n|/|\mathbb T_n|}$$ for some $c>0$. For every $B \in \mathcal C(r,L)$, $s>0, \pi \in (0,\infty]$ and $p \geq 1$, large enough $n$ and $c$ and small enough $\varpi$, the following estimate holds
$$
\Big(\EE \big[ \|  \widehat{B}_n - B \|_{L^p(\Dd)}^p \big]\Big)^{1/p} \lesssim \Big(\frac{\log |\mathbb T_n|}{|\mathbb T_n|}\Big)^{\alpha_1(s,p,\pi)},$$
with $\alpha_1(s,p,\pi) = \min\big\{ \tfrac{s}{2s+1} , \tfrac{s+1/p-1/\pi}{2s+1-2/\pi}\big\}$,  
up to a constant that depends on $s,p,\pi, \|B\|_{\Bb_{\pi,\infty}^{s}(\mathcal D)}$, $r$ and $L$ and that is continuous in its arguments.\\

This rate is moreover (nearly) optimal: define $\varepsilon_1 =  s\pi- \tfrac{1}{2}(p-\pi) $. We have
$$
\inf_{ \widehat{B}_n} \sup_{B} \Big(\EE \big[ \| \widehat{B}_n - B \|^p_{L^p(\mathcal D)} \big]\Big)^{1/p} \gtrsim 
\left\{
\begin{array}{lll}
|\TT_n|^{-\alpha_1(s,p,\pi) } & \text{if} & \varepsilon_1 > 0 \\
\displaystyle \Big(\frac{\log|\mathbb T_n|}{|\mathbb T_n|}\Big)^{\alpha_1(s,p,\pi)  }& \text{if} & \varepsilon_1 \leq 0, \\
\end{array}
\right.
$$
where the infimum is taken among all estimators of $B$ based on $(X_u)_{u\in\TT_n}$ and the supremum is taken among all $B \in \mathcal C(r,L)$ such that  $\|B\|_{\Bb_{\pi,\infty}^{s}(\mathcal D)} \leq C$. 
\end{thm}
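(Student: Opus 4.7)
The starting point is the explicit formula
$$B(x) = \frac{\tau x}{2}\frac{\nu_B(x/2)}{D(x)},\qquad D(x)=\int_{x/2}^x \nu_B(z)\,dz,$$
recalled from \cite{DHKR1}. Writing $\widehat D_n(x) = |\mathbb T_n|^{-1}\sum_{u\in\mathbb T_n}\mathbf{1}_{\{x/2\leq X_u<x\}}$, so that $\widehat B_n(x) = \tfrac{\tau x}{2}\widehat\nu_n(x/2)/(\widehat D_n(x)\vee \varpi)$, I would use the decomposition
$$\widehat B_n(x)-B(x) = \frac{\tau x}{2}\,\frac{\widehat\nu_n(x/2)-\nu_B(x/2)}{\widehat D_n(x)\vee\varpi} \,+\, \frac{\tau x}{2}\,\nu_B(x/2)\,\frac{D(x)-\widehat D_n(x)\vee\varpi}{D(x)\,(\widehat D_n(x)\vee\varpi)}.$$
The $L^p(\mathcal D)$-norm of the first summand is bounded by $(2\varpi)^{-1}\tau\,\sup\mathcal D\cdot \|\widehat\nu_n(\cdot/2)-\nu_B(\cdot/2)\|_{L^p(\mathcal D)}$, and Theorem~\ref{thm:ratenu} produces the announced rate $\big(\log|\mathbb T_n|/|\mathbb T_n|\big)^{\alpha_1(s,p,\pi)}$, provided the constants coming out of Assumption~\ref{ass:ergq} are uniform in $B\in\mathcal C(r,L)$ (which is exactly what the use of Hairer--Mattingly \cite{HairerMattingly} invoked earlier guarantees).

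For the second summand the plan is to establish (i) a uniform lower bound $D(x)\ge c_0>0$ for $x\in\mathcal D$ and $B\in\mathcal C(r,L)$, and (ii) a concentration inequality for $\widehat D_n(x)$ around $D(x)$. For (i), combining the explicit form of $Q_B$ in \eqref{Pp_B} with Assumption~\ref{ass:ergq} applied to indicators of small intervals yields uniform positive upper and lower bounds for $\nu_B$ on any fixed compact subinterval of $\mathcal S$, hence $D(x)\ge \tfrac{x}{2}\,\inf_{\mathcal D/2\cup\mathcal D}\nu_B \ge c_0>0$. For (ii), I would apply Theorem~\ref{thm:1step}\,(ii) to $g_x(y)=\mathbf{1}_{\{x/2\le y<x\}}$ (for which $|g_x|_\infty=1$ and $|g_x|_1=x/2$), and use a union bound over a grid of step $|\mathbb T_n|^{-1}$ in $\mathcal D$ together with monotonicity in $x$ to conclude that, on an event of probability at least $1-|\mathbb T_n|^{-\gamma}$ for $\gamma$ arbitrarily large, $|\widehat D_n(x)-D(x)|\lesssim \sqrt{\log|\mathbb T_n|/|\mathbb T_n|}$ uniformly in $x\in\mathcal D$; in particular $\widehat D_n(x)\vee\varpi\ge D(x)/2$ for $\varpi$ small enough and $n$ large. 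On that event the second summand is $O\bigl(\sqrt{\log|\mathbb T_n|/|\mathbb T_n|}\bigr)$ in $L^\infty(\mathcal D)$, hence in $L^p(\mathcal D)$, absorbed by the nonparametric rate; the complement contributes negligibly after a crude bound on $\|\widehat\nu_n\|_\infty$ using $\eta$ and Assumption~\ref{AssumptionA}.

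For the minimax lower bound, the key is the bijection between $B$ and $\nu_B$: formula \eqref{Pp_B} together with $B(x)=\tfrac{\tau x}{2}\nu_B(x/2)/D(x)$ allows to recover each from the other, and a direct computation shows that this map preserves Besov regularity (up to constants depending on $r,L$ and the bounds on $\nu_B$). I would then build a parametric subfamily $\{B_\theta,\theta\in\Theta\}\subset\mathcal C(r,L)$ whose invariant densities $\nu_{B_\theta}$ form a standard hypercube of localized wavelet bumps on $\mathcal D/2$ of prescribed Besov size. Conditioning on $X_\emptyset\sim\nu_{B_\theta}$ makes every $X_u$ marginally distributed according to $\nu_{B_\theta}$, so that testing among the $B_\theta$'s from $\mathbb X^n$ is no easier than testing among the $\nu_{B_\theta}$'s from $|\mathbb T_n|$ marginally identically distributed samples. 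Standard Assouad--Fano arguments as in Theorem~2 of Donoho {\it et al.}~\cite{DJKP}, or Hoffmann~\cite{H4} in the regular regime $\varepsilon_1>0$, and Kerkyacharian--Picard--type lower bounds in the sparse regime $\varepsilon_1\le 0$, then yield the claimed lower rate; the Besov-equivalence $B\leftrightarrow\nu_B$ transfers it from $\nu_B$ to $B$.

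The main obstacle is the uniformity in $B\in\mathcal C(r,L)$ of all the constants entering Theorems~\ref{thm:ratenu} and~\ref{thm:1step}\,(ii) (i.e.\ $R$, $\rho$, $|\Qq|_\mathcal D$, as well as the two-sided bounds on $\nu_B$), and the quantitative Besov-regularity transfer $\nu_B\leftrightarrow B$, which is needed both to deduce the upper bound from a control on $\widehat\nu_n$ in the Besov norm of $B$ and, conversely, to ensure that the wavelet-bump perturbations used in the lower bound construction correspond to genuine elements of $\mathcal C(r,L)$ with the prescribed Besov norm of $B$.
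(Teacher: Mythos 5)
For the upper bound your route is essentially the paper's: the same two-term decomposition of $\widehat B_n-B$ into a numerator error (handled by Theorem~\ref{thm:ratenu} after transferring Besov regularity from $B$ to $\nu_B$ via the integral representation $\nu_B=\nu_B Q_B$) and a denominator error for $\widehat D_n$. The only difference is that you control $\widehat D_n$ uniformly in $x$ via a grid, a union bound and monotonicity; the paper does something lighter: it bounds $\EE\big[|\widehat D_n(x)-D(x)|^p\big]\lesssim|\TT_n|^{-p/2}$ pointwise from Theorem~\ref{thm:1step}\,(ii) applied to $g=\mathbf 1_{\{x/2\le\cdot\le x\}}$ and the layer-cake formula, then integrates over $x\in\mathcal D$ by Fubini — no uniformity in $x$ is needed since the loss is an $L^p$-norm, not a sup-norm. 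Your version works but costs you the extra bookkeeping on the complement event. The uniformity over $\mathcal C(r,L)$ of $R,\rho,|\Qq|_\mathcal D$ and of the two-sided bounds on $\nu_B$ and $D$ is obtained in the paper exactly as you indicate (Hairer--Mattingly for the ergodicity constants, and the results of Doumic \emph{et al.} for the bounds on $\nu_B$ and $v_\nu$).

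The lower bound is where you have a genuine gap. Your reduction — ``testing among the $B_\theta$'s from $\mathbb X^n$ is no easier than testing among the $\nu_{B_\theta}$'s from $|\TT_n|$ marginally identically distributed samples'' — is not a valid information-theoretic step. The observation $\mathbb X^n$ is not a garbling of i.i.d.\ draws from the marginal $\nu_{B_\theta}$: its law is determined by the transition $Q_{B_\theta}$, and the pairs $(X_{u^-},X_u)$ carry direct information on $B$ (which appears explicitly in $Q_B(x,y)$ through $B(2y)/\tau y$ and the exponential), information that i.i.d.\ marginal samples do not contain. So the tree experiment could a priori be strictly \emph{more} informative, and a lower bound for the i.i.d.\ marginal experiment does not transfer. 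The correct argument — the one the paper runs — is to perturb $B$ itself by a wavelet hypercube $B_{\epsilon,j}=B_0+\gamma|\TT_n|^{-1/2}\sum_\lambda\epsilon_\lambda\psi^1_\lambda$ with $2^{-j(s+1/2)}\gtrsim|\TT_n|^{-1/2}$, check membership in $\mathcal C(r,L)$ and in a common Besov ball, and then bound the Kullback--Leibler divergence between the \emph{tree} laws by summing the per-transition divergences of the explicit kernels $Q_{B_{\epsilon,j}}$ over the $|\TT_n|$ nodes (as in the proof of the lower bound of Theorem~\ref{thm:rateq}, with the extra complication that the invariant measures $\nu_{B_{\epsilon,j}}$ and $\nu_{B_{T_\lambda(\epsilon)},j}$ no longer coincide). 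Your hypercube built on the $\nu_{B_\theta}$'s could in principle be analysed the same way, but you would still have to compute the likelihood ratio on the tree through the induced transitions $Q_{B_\theta}$; the i.i.d.\ shortcut cannot replace that computation.
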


Two remarks are in order: \\

\noindent {\bf 1)} We improve on the results of Doumic {\it et al.} \cite{DHKR1} in two directions: we have smoothness-adaptation (in the sense described in Remark 2) after Theorem~\ref{thm:ratenu} in Section~\ref{sec:stat} for several loss functions over various Besov smoothness classes, while \cite{DHKR1} constructs a non-adapative estimator for H\"older smoothness in squared-error loss; moreover, we prove that the obtained rate is (nearly) optimal in a minimax sense.
 
\noindent {\bf 2)} We unfortunately need to work under the quite stringent restriction that $\mathcal S$ is bounded in order to obtain the uniform ergodicity Assumption~\ref{ass:ergq}, see Remark 3) after Theorem~\ref{thm:triplets} in Section~\ref{sec:deviations}.

\subsection{Bifurcating autoregressive process} \label{application BAR}

Bifurcating autoregressive processes (BAR), first introduced by Cowan and Staudte \cite{CS86}, are yet another stochastic model for understanding cell division. The trait $X_u$ may represent the growth rate of a bacteria $u\in \mathbb T$ in a population of {\it Escherichia Coli} but other choices are obviously possible.
Contrary to the growth-fragmentation model of Section~\ref{application DHKR1} the trait $(X_{u0}, X_{u1})$ of the two newborn cells differ and are linked through the autoregressive dynamics
\begin{equation}\label{eq BAR}
\begin{array}{ll}
\left\{\begin{array}{ll} X_{u0} = f_0(X_{u}) + \sigma_0(X_u) \ep_{u0}, \\
\\ X_{u1} = f_1(X_{u})+ \sigma_1(X_u) \ep_{u1},
\end{array} \right.
\end{array}
\end{equation}
initiated with $X_{\emptyset}$ and
where 
$$f_0, f_1 : \RR \rightarrow \RR\;\;\text{and}\;\;\sigma_0,\sigma_1:\RR \rightarrow (0,\infty)$$
are functions and $(\varepsilon_{u_0}, \varepsilon_{u_1})_{u\in \TT}$ are i.i.d. noise variables with common density function $G:\RR^2 \rightarrow [0,\infty)$  that specify the model.\\

The process $(X_{u})_{u\in\TT}$ is a bifurcating Markov chain with state space $\mathcal S=\R$ and $\TT$-transition
\begin{equation} \label{eq:BARtrans}
\Ttransition(x,dy\,dz) =  G\Big( \sigma_0(x)^{-1} \big(y-f_0(x) \big) , \sigma_1(x)^{-1} \big(z-f_1(x)\big) \Big) dy \,dz.
\end{equation}
This model can be seen as an adaptation of nonlinear autoregressive model when the data have a binary tree structure. The original BAR process in \cite{CS86} is defined for linear link functions $f_0$ and $f_1$ with $f_0 = f_1$
Several extensions have been studied from a parametric point of view, see {\it e.g.}  Basawa and Huggins \cite{BH99, BH00} and Basawa and Zhou \cite{BZ04, BZ05}. More recently,  de Saporta {\it et al.} \cite{BdSGP09, dSGPM12} introduces asymmetry and take into account missing data while Blandin \cite{ Blandin13}, Bercu and Blandin \cite{BB13}, and  de Saporta {\it et al.} \cite{dSGPM13} study an extension with random coefficients. Bitseki-Penda and Djellout \cite{BD14} prove deviation inequalities and moderate deviations for estimators of parameters in linear BAR processes.
From a nonparametric point of view, we mention the applications of \cite{BPEBG15} (Section 4) where deviations inequalities are derived for the Nadaraya-Watson type estimators of $f_0$ and $f_1$ with constant and known functions $\sigma_0$ and $\sigma_1$). 
A detailed nonparametric study of these estimators is carried out in Bitseki Penda and Olivier \cite{BO}.\\

We focus here on the nonparametric estimation of the characteristics of the tagged-branch chain $\nu$ and $\Qq$ and on the $\TT$-transition $\Ttransition$, based on the observation of $(X_u)_{u \in \mathbb T_n}$ for some $n \geq 1$. Such an approach can be helpful for the subsequent study of goodness-of-fit tests for instance, when one needs to assess whether the data $(X_u)_{u \in \mathbb T}$ are generated by a model of the form \eqref{eq BAR} or not.\\

We set  $G_0(x) = \int_\mathcal S G(x,y)dy$ and $G_1(y) = \int_\mathcal S G(x,y)dx$
for the marginals of $G$, and define, for any $M>0$,
\begin{equation*} 
\delta(M) =  \min\big\{\inf_{|x|\leq M}G_{0}(x), \inf_{|x|\leq M}G_{1}(x)\big\}.
\end{equation*}

\begin{assumption}
 \label{ass:BAR} 
For some $\ell>0$ and $\underline{\sigma}>0$, we have
$$
\max\big\{ \sup_x| f_0(x)|, \sup_x| f_1(x)|\big \} \leq \ell < \infty$$
and 
$$\min \big\{ \inf_x \sigma_0(x), \inf_x\sigma_1(x) \big\} \geq \underline{\sigma} > 0.
$$
Moreover, $G_0$ and $G_1$ are bounded and there exists $\mu>0$ and $M>\ell/\underline{\sigma}$ such that $\delta\big((\mu+\ell)/\underline{\sigma}\big) > 0$ and $2(M \underline{\sigma} - \ell) \delta(M) >1/2$.

\end{assumption}


%
Using that  $G_0$ and $G_1$ are bounded, and  \eqref{eq:BARtrans}, we readily check that Assumption~\ref{densityTtrans} is satisfied. We also have Assumption~\ref{ass:densityq} with $\mathfrak n(dx)=dx$ and
\begin{equation*} \label{eq:BARq}
\Qq(x,y) = \tfrac{1}{2}\Big(G_{0}\big(y-f_0(x)) + G_{1}\big(y-f_1(x)\big) \Big),
\end{equation*}
Assumption~\ref{ass:BAR} implies Assumption~\ref{ass:ergq} as well, as follows from an straightfroward adaptation of Lemma 25 in Bitseki Penda and Olivier \cite{BO}. Denoting by $\nu$ the invariant probability of $\Qq$ we also have $m(\nu)>0$ with $m(\nu)$ defined by \eqref{eq:minornu},  for every $\Dd \subset [-\mu,\mu]$, see the proof of Lemma 24 in \cite{BO}.
%
As a consequence, the results stated in Theorems~\ref{thm:ratenu},~\ref{thm:rateq} and~\ref{thm:rateP} of Section~\ref{sec:stat} carry over to the setting of BAR processes satisfying Assumption~\ref{ass:BAR}. We thus readily obtain smoothness-adaptive estimators estimators for $\nu, \Qq$ and $\Ttransition$ in this context and these results are new. 


\subsection{Numerical illustration} \label{numerical}

We focus on the growth-fragmentation model and reconstruct its size-dependent splitting rate.
We consider a perturbation of the baseline splitting rate $\widetilde B(x) = x/(5-x)$ over the range $x\in \Ss = (0,5)$ of the form
\begin{equation*} \label{eq:Btrial}
B(x) = \widetilde B(x)  + \mathfrak{c} \, T\big(2^j (x- \tfrac{7}{2})\big)
\end{equation*}
with $(\mathfrak{c},j) = (3,1)$ or $(\mathfrak{c},j) = (9,4)$, and where 
$T(x)=(1+x){\bf 1}_{\{-1\leq x < 0\}}+(1-x){\bf 1}_{\{ 0 \leq x \leq 1 \}}$ is a tent shaped function. 
Thus the trial splitting rate with parameter $(\mathfrak{c},j) = (9,4)$ is more localized around $7/2$ and higher than the one associated with parameter $(\mathfrak{c},j) = (3,1)$. One can easily check that both $\widetilde B$ and $B$ belong to the class $\Cc(r,L)$ for an appropriate choice of $(r,L)$.
For a given $B$, we simulate $M = 100$ Monte Carlo trees up to the generation $n=15$. To do so, we draw the size at birth of the initial cell $X_\emptyset$ uniformly in the interval $[1.25 , 2.25]$, we fix the growth rate $\tau = 2$ and given a size at birth $X_{u} = x$, we pick $X_{u0}$ according to the density $y \leadsto Q_B(x,y)$ defined by \eqref{Pp_B} using a rejection sampling algorithm (with proposition density $y \leadsto Q_{\widetilde B}(x,y)$) and set $X_{u1}  = X_{u0}$.
Figure \ref{Fig0} illustrates quantitatively how fast the decorrelation on the tree occurs.\\

\begin{figure}[h!]
\includegraphics[width=7cm]{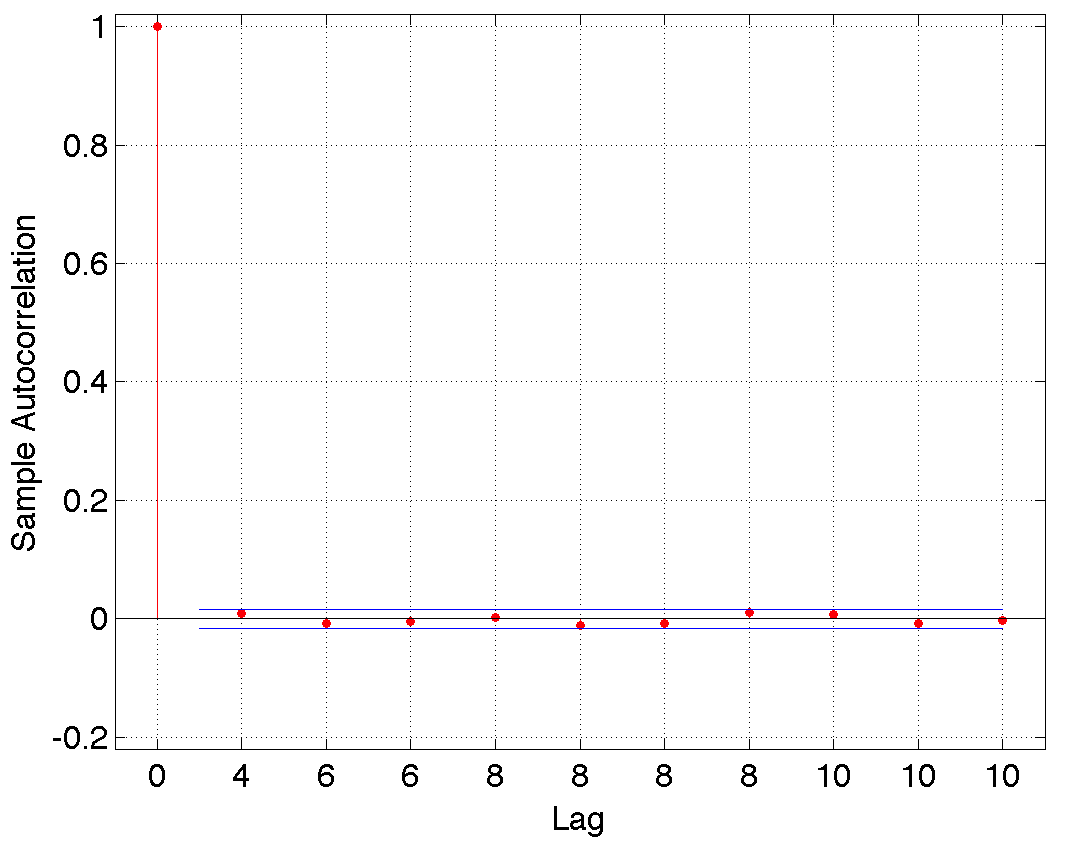}
\caption{{\it Sample autocorrelation of ordered $(X_{u0} , u \in \GG_{n-1})$ for $n=15$. Note: due to the binary tree structure the lags are $\{4, 6, 6,\ldots\}$. As expected, we observe a fast decorrelation.} \label{Fig0}}
\end{figure}

Computational aspects of statistical estimation using wavelets can be found in H\"ardle {\it et al.}, Chapter 12 of \cite{HKPT98}.
We implement the estimator $\widehat B_n$ defined by \eqref{Bhat} using the Matlab wavelet toolbox. 
We take a wavelet filter corresponding to compactly supported Daubechies wavelets of order 8.  As specified in Theorem \ref{prop:rateB}, the maximal resolution level $J$ is chosen as $\tfrac{1}{2}\log_2 (|\mathbb T_n| / \log |\mathbb T_n|)$ and we threshold the coefficients $\widehat \nu_{\lambda,n}$ which are too small by hard thresholding. We choose the threshold proportional to $\sqrt{\log |\TT_n|/|\TT_n|}$ (and we calibrate the constant to  10 or 15 for respectively the two trial splitting rates, mainly by visual inspection). 
%
%
We evaluate  $\widehat B_n$ on a regular grid of $\Dd = [1.5,4.8]$ with mesh $\Delta x = (|\TT_n|)^{-1/2}$. For each sample we compute the empirical error 
$$
e_i = \frac{\|\widehat B_n^{(i)} - B\|_{\Delta x}}{\|B\|_{\Delta x}} , \quad i = 1,\ldots, M,
$$
where $\|\cdot\|_{\Delta x}$ denotes the discrete $L^2$-norm over the numerical sampling and sum up the results through the mean-empirical error $\bar e = M^{-1} \sum_{i = 1}^M e_i$, together with the empirical standard deviation $\big( M^{-1} \sum_{i = 1}^M (e_i-\bar e)^2 \big)^{1/2}$.\\

Table \ref{tab:errorBhat} displays the numerical results we obtained, also giving the compression rate (columns \%) defined as the number of wavelet coefficients put to zero divided by the total number of coefficients. We choose an oracle error as benchmark: the oracle estimator is computed by picking the best resolution level $J^*$ with no coefficient thresholded. We also display the results when constructing $\widehat B_n$ with $\GG_n$ (instead of $\TT_n$), in which case an analog of Theorem \ref{prop:rateB} holds.
For the large spike, the thresholding estimator behaves quite well compared to the oracle for a large spike and achieves the same performance for a high spike.\\
%

%

\begin{table}[h!]
\begin{tabular}{cccclcclcclcc}
\hline \hline 
\multicolumn{1}{l}{}                                                         & \multicolumn{1}{l}{} & \multicolumn{5}{c}{$\boldsymbol{n = 12}$}                                                                                                                                                             &  & \multicolumn{5}{c}{$\boldsymbol{n = 15}$}                                                                                                                                                             \\ \cline{3-7} \cline{9-13} 
\multicolumn{1}{l}{}                                                         & \multicolumn{1}{l}{} & \multicolumn{2}{c}{{\bf Oracle}}                                                                  &  & \multicolumn{2}{c}{{\bf Threshold est.}}                                                       &  & \multicolumn{2}{c}{{\bf Oracle}}                                                                  &  & \multicolumn{2}{c}{{\bf Threshold est.}}                                                       \\ \cline{3-4} \cline{6-7} \cline{9-10} \cline{12-13} 
\multicolumn{1}{l}{}                                                         & \multicolumn{1}{l}{} & \multicolumn{1}{l}{$\boldsymbol{\underset{{\rm (sd.)}}{{\rm Mean}}}$} & \multicolumn{1}{l}{$J^*$} &  & \multicolumn{1}{l}{$\boldsymbol{\underset{{\rm (sd.)}}{{\rm Mean}}}$} & \multicolumn{1}{c}{\%} &  & \multicolumn{1}{l}{$\boldsymbol{\underset{{\rm (sd.)}}{{\rm Mean}}}$} & \multicolumn{1}{l}{$J^*$} &  & \multicolumn{1}{l}{$\boldsymbol{\underset{{\rm (sd.)}}{{\rm Mean}}}$} & \multicolumn{1}{c}{\%} \\ \hline
\multirow{2}{*}{{\bf \begin{tabular}[c]{@{}c@{}}Large\\ spike\end{tabular}}} & $\TT_n$              & $\underset{(0.0159)}{0.0677}$                                         & 5                         &  & $\underset{(0.0196)}{0.1020}$                                         & 96.6                   &  & $\underset{(0.0055)}{0.0324}$                                         & 6                         &  & $\underset{(0.0055)}{0.0502}$                                         & 97.1                   \\
                                                                             & $\GG_n$              & $\underset{(0.0202)}{0.0933}$                                         & 5                         &  & $\underset{(0.0267)}{0.1454}$                                         & 97.9                   &  & $\underset{(0.0081)}{0.0453}$                                         & 6                         &  & $\underset{(0.0097)}{0.0728}$                                         & 96.7                   \\ \\
\multirow{2}{*}{{\bf \begin{tabular}[c]{@{}c@{}}High\\ spike\end{tabular}}}  & $\TT_n$              & $\underset{(0.0180)}{0.1343}$                                         & 7                         &  & $\underset{(0.0163)}{0.1281}$                                         & 97.4                   &  & $\underset{(0.0059)}{0.0586}$                                         & 8                         &  & $\underset{(0.0060)}{0.0596}$                                         & 97.7                   \\
                                                                             & $\GG_n$              & $\underset{(0.0222)}{0.1556}$                                         & 7                         &  & $\underset{(0.0228)}{0.1676}$                                         & 97.7                   &  & $\underset{(0.0079)}{0.0787}$                                         & 8                         &  & $\underset{(0.0087)}{0.0847}$                                         & 97.9                \\  \hline \hline  \\
\end{tabular}
\caption{Mean empirical relative error $\bar e$ and its standard deviation, with respect to $n$, for the trial splitting rate $B$ specified by $(\mathfrak{c},j) = (3,1)$ (large spike) or $(\mathfrak{c},j) = (4,9)$ (high spike) reconstructed over the interval $\Dd  = [1.5,4.8]$ by the estimator $\widehat B_n$. Note: for $n = 15$,  $\tfrac{1}{2}|\TT_n | = 32~767$ and $\tfrac{1}{2}|\GG_n | = 16~384$; for $n = 12$,  $\tfrac{1}{2}|\TT_n | = 4~095$ and $\tfrac{1}{2}|\GG_n | = 2~048$. }
\label{tab:errorBhat}
\end{table}

Figure \ref{Fig1} and Figure \ref{Fig2} show the reconstruction of the size-dependent splitting rate $B$ and the invariant measure $\nu_B$ in the two cases (large or high spike) for one typical sample of size $\tfrac{1}{2}|\TT_n | = 32~767$.  In both cases, the spike is well reconstructed and so are the discontinuities in the derivative of $B$.
As expected, the spike being localized around $\tfrac{7}{2}$ for $B$, we detect it around $\tfrac{7}{4}$ for the invariant measure of the sizes at birth $\nu_B$. The large spike concentrates approximately 50\% of the mass of $\nu_B$ whereas the large only concentrates 20\% of the mass of $\nu_B$.\\

\begin{figure}[h!]
\includegraphics[width=6cm]{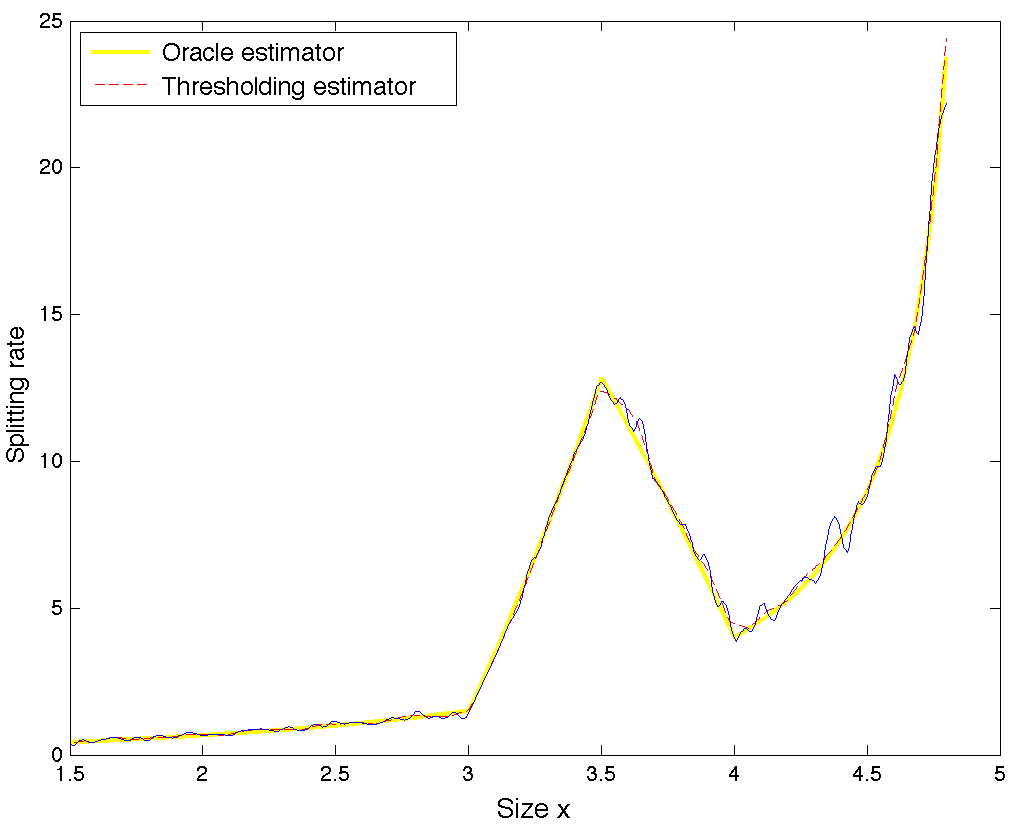}
\includegraphics[width=6cm]{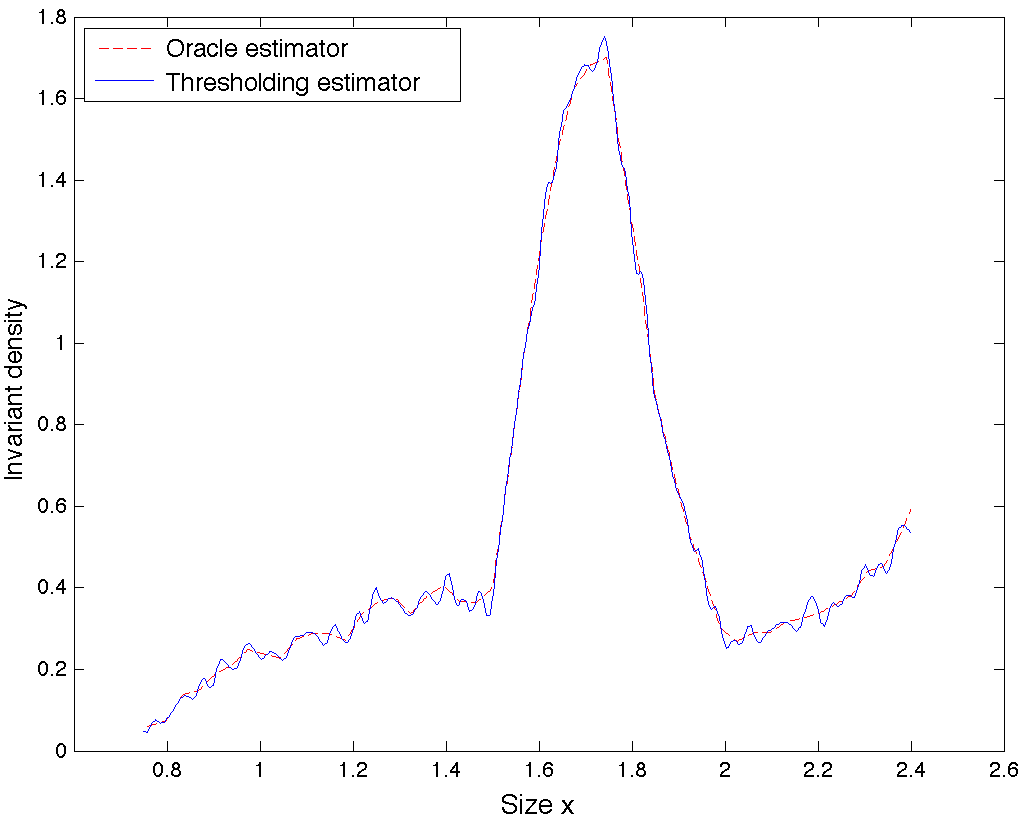}
\caption{{\it Large spike: reconstruction of the trial splitting rate $B$ specified by $(\mathfrak{c},j) = (3,1)$ over $\Dd = [1.5,4.8]$ and reconstruction of $\nu_B$ over $\Dd/2$ based on one sample $(X_u,u \in \TT_n)$ for $n=15$ ({\it i.e.} $\tfrac{1}{2}|\TT_n | = 32~767$).} \label{Fig1}}
\end{figure}

\begin{figure}[h!]
\includegraphics[width=6cm]{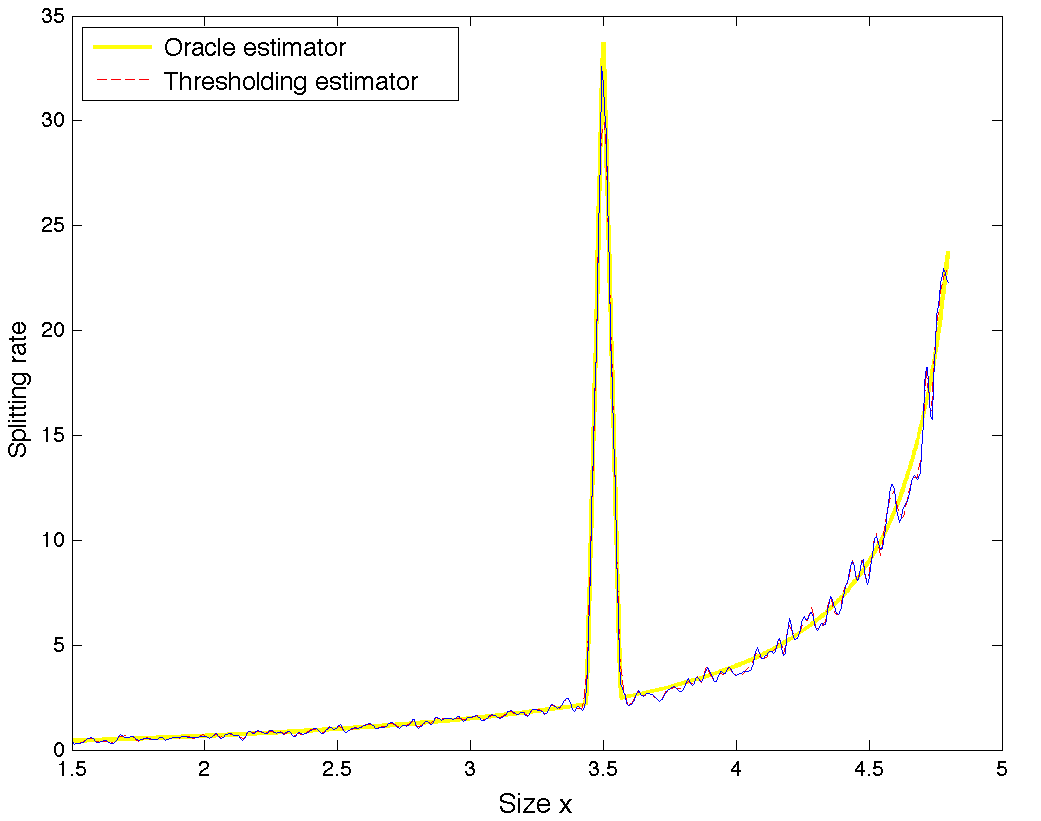}
\includegraphics[width=6cm]{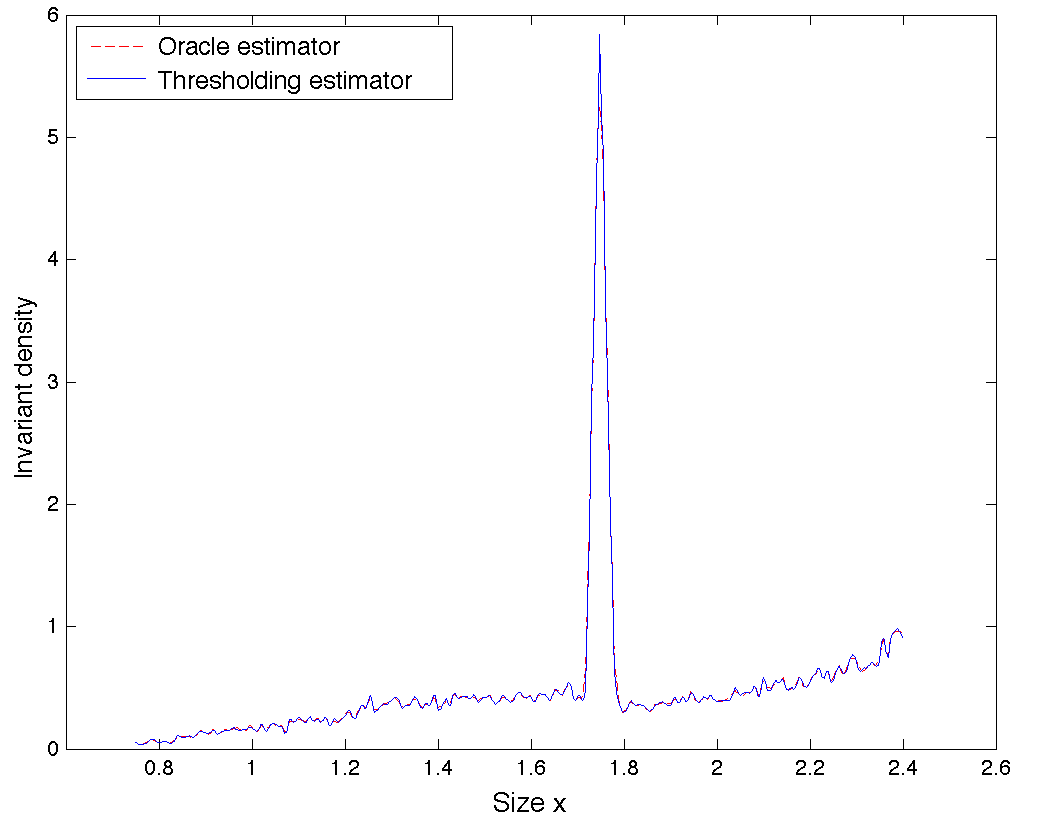}
\caption{{\it High spike : reconstruction of the trial splitting rate $B$ specified by $(\mathfrak{c},j) = (9,4)$ over $\Dd = [1.5,4.8]$ and reconstruction of $\nu_B$ over $\Dd/2$ based on one sample $(X_u,u \in \TT_n)$ for $n=15$ ({\it i.e.} $\tfrac{1}{2}|\TT_n | = 32~767$).} \label{Fig2}}
\end{figure}

\section{Proofs} \label{sec:proofs}

\subsection{Proof of Theorem~\ref{thm:1step}\,(i)} 
Let $g:\mathcal S \rightarrow \R$ such that $|g|_1<\infty$. Set $\nu(g) = \int_{\mathcal S}g(x)\nu(dx)$ and $\widetilde g = g-\nu(g)$.  
Let $n\geq 2$.
By the usual Chernoff bound argument, for every $\lambda > 0$, we have
\begin{equation} \label{chernoff 1}
\PP  \Big( \frac{1}{|\GG_n|} \sum_{u \in \GG_n} \gtilde(X_u) \geq \delta \Big) \leq
\exp \big( -\lambda |\GG_n| \delta \big)  \EE \Big[ \exp \big( \lambda \sum_{u \in \GG_n} \gtilde(X_u) \big) \Big].
\end{equation}
\noindent {\it Step 1}. 
We have
\begin{align*}
 \EE  \Big[  \exp \big( \lambda  \sum_{u \in \GG_n} \gtilde(X_u) \big) \Big| \Ff_{n-1} \Big]  
 & = \EE  \Big[  \prod_{u \in \GG_{n-1}}  \exp \Big( \lambda \big( \gtilde(X_{u0})+\gtilde(X_{u1}) \big) \Big) \Big| \Ff_{n-1} \Big] \\
 & = \prod_{u \in \GG_{n-1}} \EE \Big[ \exp \Big(\lambda \big(\gtilde(X_{u0})+\gtilde(X_{u1}) \big) \Big) \Big| \Ff_{n-1}\Big]
\end{align*}
thanks to the conditional independence of the $(X_{u0},X_{u1})_{u\in\GG_{n-1}}$ given $\Ff_{n-1}$, as follows from Definition~\ref{def BMC}. We rewrite this last term as
$$\prod_{u \in \GG_{n-1}} \hspace{-0.15cm} \EE \Big[ \exp \Big(\lambda \big(\gtilde(X_{u0}) + \gtilde(X_{u1}) - 2 \Qq \gtilde(X_{u})\big) \Big) \big| \Ff_{n-1}\Big] 
\exp \big(\lambda 2 \Qq \gtilde(X_{u}) \big),
$$
inserting the $\mathcal F_{n-1}$-measurable random variable $2\Qq \gtilde(X_{u})$ for $u \in \mathbb G_{n-1}$. Moreover, the bifurcating struture of $(X_u)_{u \in \mathbb T}$ implies
\begin{equation} \label{eq:centered1}
\EE\big[ \gtilde(X_{u0}) + \gtilde(X_{u1}) - 2 \Qq \gtilde(X_{u}) \big| \Ff_{n-1}\big] = 0 , \quad u \in \GG_{n-1},
\end{equation}
since $\Qq=\tfrac{1}{2}(\Pp_0+\Pp_1)$.
%
We will also need the following bound, proof of which is delayed until Appendix
\begin{lem} \label{lem:control1}
Work under Assumptions~\ref{ass:densityq} and~\ref{ass:ergq}. 
For all $r = 0,\ldots, n-1$ and $u \in \GG_{n-r-1}$, we have
\begin{equation*} \label{control11}
\big| 2^r \big(\Qq^{r} \gtilde(X_{u0}) + \Qq^{r} \gtilde(X_{u1}) - 2 \Qq^{r+1} \gtilde(X_{u})\big) \big| \leq c_1 |g|_\infty
\end{equation*}
and
\begin{equation*} \label{control12}
\EE \Big[\Big(2^r \big(\Qq^{r} \gtilde(X_{u0}) + \Qq^{r} \gtilde(X_{u1}) - 2 \Qq^{r+1} \gtilde(X_{u})\big)\Big)^2 \Big| \Ff_{n-r-1} \Big] \leq c_2 \sigma^2_r(g), 
\end{equation*}
with  $c_1 = 4 \max \big\{ 1+R\rho , R(1+\rho)\big\}$, $c_2=  4\max \{ |\Qq|_{\mathcal D}, 4 |\Qq|_{\mathcal D}^2,  4 R^2(1+\rho)^2 \}$ and
\begin{equation} \label{eq:variance}
\sigma^2_r(g) =
\left\{
\begin{array}{ll}
  |g|_2^2 \,  &   r = 0,\\ 
  \min\big\{ |g|_1^2  2^{2r},|g|^2_{\infty} (2 \rho)^{2r} \big\} &  r = 1,\ldots, n-1.
\end{array}
\right.
\end{equation}
(Recall that $|\Qq|_\mathcal D = \sup_{x \in \mathcal S, y \in \mathcal D}\Qq(x,y)$ and $R,\rho$ are defined via Assumption~\ref{ass:ergq}.)
\end{lem}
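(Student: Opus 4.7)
The plan is to exploit two basic reductions. First, since $\gtilde = g - \nu(g)$ and $\Qq^r$ preserves constants, one has $\Qq^r \gtilde(y) = \Qq^r g(y) - \nu(g)$, so that all $\nu(g)$ contributions cancel in the combination with coefficients $(1,1,-2)$:
\[
\Qq^r \gtilde(X_{u0}) + \Qq^r \gtilde(X_{u1}) - 2 \Qq^{r+1}\gtilde(X_u) = \Qq^r g(X_{u0}) + \Qq^r g(X_{u1}) - 2 \Qq^{r+1} g(X_u).
\]
Second, conditionally on $\Ff_{n-r-1}$, the bifurcating Markov property of Definition~\ref{def BMC} combined with $\Qq = (\Pp_0+\Pp_1)/2$ implies that this expression has zero conditional mean (this is \eqref{eq:centered1} applied to $\Qq^r g$ in place of $g$), so its conditional second moment coincides with its conditional variance.

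For the $L^\infty$ bound~(1), Assumption~\ref{ass:ergq} gives $|\Qq^r \gtilde(x)| = |\Qq^r g(x) - \nu(g)| \leq R|g|_\infty \rho^r$ for every $r \geq 1$, while $|\gtilde|_\infty \leq 2|g|_\infty$ for $r = 0$. A triangle inequality then yields
\[
2^r \big|\Qq^r \gtilde(X_{u0}) + \Qq^r \gtilde(X_{u1}) - 2\Qq^{r+1}\gtilde(X_u)\big| \leq 2R(1+\rho)(2\rho)^r |g|_\infty \leq 2R(1+\rho)|g|_\infty
\]
for $r \geq 1$, using crucially $2\rho < 1$, and a constant multiple of $|g|_\infty$ for $r = 0$. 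Taking $c_1$ to be the maximum of these two constants, with some slack, gives the first assertion.

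For the variance bound~(2), set $h = \Qq^r \gtilde$. Using the centering property above, bounding the conditional variance by the conditional second moment, and applying $(a+b)^2 \leq 2(a^2+b^2)$ together with the bifurcating Markov property,
\[
\EE\big[\big(h(X_{u0})+h(X_{u1})-2\Qq h(X_u)\big)^2 \,\big|\, \Ff_{n-r-1}\big] \leq 2(\Pp_0 h^2 + \Pp_1 h^2)(X_u) = 4\, \Qq h^2(X_u).
\]
It remains to estimate $\Qq h^2(X_u)$ in three distinct ways, one for each case in \eqref{eq:variance}. When $r = 0$, the first reduction lets us replace $\gtilde$ by $g$, and since $g$ vanishes outside $\Dd$ we get $\Qq h^2(x) \leq \Qq g^2(x) \leq |\Qq|_\Dd |g|_2^2$. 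When $r \geq 1$ and we aim at the $(2\rho)^{2r}|g|_\infty^2$ rate, we apply Assumption~\ref{ass:ergq} at level $r$ to obtain $|h|_\infty \leq R|g|_\infty \rho^r$, so that $\Qq h^2(x) \leq |h|_\infty^2 \leq R^2|g|_\infty^2 \rho^{2r}$; the multiplication by $4^r$ turns $\rho^{2r}$ into $(2\rho)^{2r}$.

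The delicate case is the $|g|_1^2 2^{2r}$ bound, which requires showing $|h|_\infty \leq 2|\Qq|_\Dd |g|_1$. To this end, writing
\[
\Qq^r(y, z) = \int_\Ss \Qq^{r-1}(y, dw)\, \Qq(w, z) \leq |\Qq|_\Dd, \quad z \in \Dd,
\]
yields $|\Qq^r g(y)| \leq \int_\Dd |g(z)| \Qq^r(y,z) dz \leq |\Qq|_\Dd |g|_1$. For the remaining term $|\nu(g)|$, the invariance $\nu = \nu \Qq$ implies that $\nu$ has density bounded by $|\Qq|_\Dd$ on $\Dd$, hence $|\nu(g)| \leq \nu(|g|) \leq |\Qq|_\Dd |g|_1$. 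This gives $\Qq h^2 \leq 4|\Qq|_\Dd^2 |g|_1^2$, and multiplying by $4^r$ produces the claimed $2^{2r}$ factor. Taking $c_2$ as the maximum of the three constants thus obtained yields the announced bound. The main obstacle is precisely this last step, as it is the only one where the invariance equation $\nu = \nu\Qq$ must be used non-trivially to control $\nu$ uniformly by $|\Qq|_\Dd$, which is why the factor $|\Qq|_\Dd^2$ unavoidably appears in $c_2$.
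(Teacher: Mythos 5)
Your proof is correct and follows essentially the same route as the paper: the $(1,1,-2)$ combination kills the centering constant, Assumption~\ref{ass:ergq} gives the $(2\rho)^r|g|_\infty$ decay, and $|\Qq|_{\mathcal D}$ gives the competing $|g|_1$ and $|g|_2$ controls, all combined through conditional independence. The only (harmless) detour is in the $|g|_1^2 2^{2r}$ branch, where you bound $|\Qq^r\gtilde|_\infty\le|\Qq^r g|_\infty+|\nu(g)|$ and control $|\nu(g)|\le|\Qq|_{\mathcal D}|g|_1$ via $\nu=\nu\Qq$; the paper avoids this by using the cancellation to replace $\gtilde$ by $g$ in the whole combination and bounding each $\Qq^r g$ term directly, so the invariance equation is not actually needed there — your "main obstacle" is in fact avoidable.
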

In view of \eqref{eq:centered1} and Lemma~\ref{lem:control1} for $r=0$, we plan to use the bound
\begin{equation} \label{eq:Bennett}
\EE\big[\exp(\lambda Z) \big] \leq \exp\Big(\frac{\lambda^2 \sigma^2}{2(1 - \lambda M / 3)}\Big)
\end{equation}
valid for any $\lambda\in(0,3/M)$, any  random variable $Z$ such that $| Z | \leq M$, $\EE[Z] = 0$ and $\EE[Z^2] \leq \sigma^2$.
\noindent Thus, for any $\lambda \in \big(0, 3/c_1 |g|_\infty\big)$ and any $u \in \GG_{n-1}$, with $Z= \gtilde(X_{u0}) + \gtilde(X_{u1}) - 2 \Qq \gtilde(X_{u})$, we obtain
$$
\EE \Big[ \exp \Big(\lambda \big( \gtilde(X_{u0}) + \gtilde(X_{u1}) - 2 \Qq \gtilde(X_{u})\big) \Big) \Big| \Ff_{n-1} \Big] \leq \exp \Big(  \frac{ \lambda^2 c_2 \sigma_0^2(g) }{2 (1 - \lambda c_1 |g|_\infty /3 )} \Big).
$$
It follows that
\begin{equation} 
\label{eq:step1}
 \EE  \Big[  \exp \big( \lambda  \sum_{u \in \GG_n} \gtilde(X_u) \big) \Big| \Ff_{n-1} \Big]  \leq  \exp \Big(  \frac{\lambda^2 c_2 \sigma_0^2(g)|\GG_{n-1}| } {2 (1 - \lambda c_1 |g|_\infty /3 )} \Big)  
  \prod_{u \in \GG_{n-1}}  \exp \big(\lambda 2 \Qq \gtilde(X_{u}) \big) .
\end{equation}
%
%
\noindent {\it Step 2}. We iterate the procedure in Step 1. Conditioning with respect to $\Ff_{n-2}$, we need to control 
$$
\EE\Big[\prod_{u \in \GG_{n-1}} \exp \big(\lambda 2 \Qq \gtilde(X_{u}) \big)\Big| \Ff_{n-2} \Big],
$$
and more generally, for $1\leq r \leq n-1$:
\begin{align*}
& \EE\Big[ \prod_{u \in \GG_{n-r}} \exp\big(\lambda 2^{r} \Qq^{r}\gtilde(X_u) \big) \Big| \Ff_{n-r-1} \Big] \\   
= & \prod_{u \in \GG_{n-r-1}} \EE \Big[ \exp \Big(\lambda 2^r \big(\Qq^{r} \gtilde(X_{u0}) + \Qq^{r} \gtilde(X_{u1}) - 2 \Qq^{r+1} \gtilde(X_{u})\big) \Big) \Big| \Ff_{n-r-1} \Big] \\
&\times \exp \big( \lambda 2^{r+1} \Qq^{r+1} \gtilde(X_{u}) \big),
\end{align*}
the last equality being obtained thanks to the conditional independence of the $(X_{u0},X_{u1})_{u\in\GG_{n-r-1}}$ given $\Ff_{n-r-1}$. We plan to use \eqref{eq:Bennett} again: for $u \in \GG_{n-r-1}$, we have 
$$\EE \big[ 2^r \big(\Qq^{r} \gtilde(X_{u0}) + \Qq^{r} \gtilde(X_{u1}) - 2 \Qq^{r+1} \gtilde(X_{u})\big) \big| \Ff_{n-r-1}\big] = 0$$
and the conditional variance given $\Ff_{n-r-1}$ can be controlled using Lemma~\ref{lem:control1}.
Using recursively \eqref{eq:Bennett}, for $r = 1, \ldots, n-1$,
$$
 \EE \Big[ \prod_{u \in \GG_{n-1}} \exp \big(\lambda 2 \Qq \gtilde(X_{u}) \big) \Big| \Ff_{0} \Big] \leq
 \prod_{r = 1}^{n - 1}  \exp\Big( \frac{ \lambda^2 c_2  \sigma^2_r(g) |\GG_{n-r-1}| }{2 \big(1 - \lambda c_1 |g|_{\infty} /3\big)} \Big)  
  \exp \big( \lambda 2^n \Qq^n \gtilde(X_{\emptyset}) \big)
$$
for $\lambda \in  \big(0, 3/c_1 |g|_\infty\big)$. By Assumption~\ref{ass:ergq}, 
$$
\exp \big(\lambda 2^n \Qq^n \gtilde(X_{\emptyset})\big) \leq \exp( \lambda 2^n R (2 |g|_{\infty})  \rho^n) \leq \exp(\lambda 2 R |g|_{\infty})
$$
since $\rho < 1/2$. In conclusion
\begin{equation*}
 \EE \Big[ \prod_{u \in \GG_{n-1}} \exp \big(\lambda 2 \Qq \gtilde(X_{u}) \big) \Big] \leq \exp\Big( \frac{ \lambda^2 c_2   \sum_{r = 1}^{n - 1} \sigma^2_r(g) |\GG_{n-r-1}| }{2 \big(1 - \lambda c_1 |g|_{\infty} /3\big)} \Big)   \exp( \lambda 2 R |g|_{\infty}).
\end{equation*}

\vip
\noindent {\it Step 3}. Let $1 \leq \ell \leq n-1$.
By definition of $\sigma^2_r(g)$ -- recall \eqref{eq:variance} -- and using the fact that $(2\rho)^{2r} \leq 1$, since moreover $|\GG_{n-r-1}| = 2^{n-r-1}$, we successively obtain
\begin{align*}
\sum_{r = 1}^{n-1} \sigma_r^2(g)2^{n-r-1} & \leq  2^{n-1}\big( |g|_1^2\sum_{r=1}^\ell 2^r+|g|^2_\infty \sum_{r=\ell+1}^{n-1}2^{-r}(2\rho)^{2r}\big) \\  
& \leq 2^n\big( |g|_1^2 2^{\ell}+|g|_\infty^22^{-\ell}\big) \\
& \leq |\mathbb G_n| \phi_n(g)
\end{align*}
for an appropriate choice of $\ell$, with 
$\phi_n(g) = \min_{1 \leq \ell \leq n-1}\big(|g|_1^2 2^\ell + |g|_\infty^2 2^{-\ell} \big)$.
It follows that 
\begin{equation} \label{eq:step3}
 \EE \Big[ \prod_{u \in \GG_{n-1}} \exp \big(\lambda 2 \Qq \gtilde(X_{u}) \big) \Big]  \leq  \exp\Big(\frac{ \lambda^2 c_2 |\GG_n| \phi_n(g)}{2 \big(1 - \lambda c_1 |g|_\infty /3  \big)} + \lambda 2 R |g|_\infty \Big).
\end{equation}
\vip
\noindent {\it Step 4}. 
Putting together the estimates \eqref{eq:step1} and \eqref{eq:step3} and coming back to \eqref{chernoff 1}, we obtain
$$
\PP  \Big( \frac{1}{|\GG_n|} \sum_{u \in \GG_n} \gtilde(X_u) \geq \delta \Big) \leq
\exp \Big( -\lambda |\GG_n| \delta +\frac{\lambda^2 c_2  |\GG_n| \Sigma_{1,n}(g) }{2 \big(1 - \lambda c_1 |g|_\infty /3  \big)}+ \lambda 2 R |g|_\infty \Big)
$$
with $\Sigma_{1,n}(g) = |g|_2^2 + \phi_n(g)$ for $n \geq 2$ and $\Sigma_{1,1}(g)=\sigma_0^2(g)=|g|_2^2$.
Since $\delta$ is such that $2 R |g|_\infty \leq |\GG_n| \delta/2$, we obtain
$$
\PP  \Big( \frac{1}{|\GG_n|} \sum_{u \in \GG_n} \gtilde(X_u) \geq \delta \Big) \leq
\exp \Big( -\lambda |\GG_n| \frac{\delta}{2} + \frac{ \lambda^2 c_2  |\GG_n| \Sigma_{1,n}(g)}{2 \big(1 - \lambda c_1 |g|_\infty /3  \big)} \Big).
$$
The admissible choice $\lambda=\delta/\big(\tfrac{2}{3}\delta c_1|g|_\infty+2c_2\Sigma_{1,n}(g)\big)$ yields the result.

\subsection{Proof of Theorem~\ref{thm:1step}\,(ii)} 
\noindent {\it Step 1}. 
Similarly to \eqref{chernoff 1}, we plan to use
\begin{equation} \label{chernoff 2}
\PP  \Big( \frac{1}{|\TT_n|} \sum_{u \in \TT_n} \gtilde(X_u) \geq \delta \Big) \leq
\exp \big( -\lambda |\TT_n| \delta \big)  \EE \Big[ \exp \big( \lambda \sum_{u \in \TT_n} \gtilde(X_u) \big) \Big]
\end{equation}
for a specific choice of $\lambda >0$. We first need to control 
 \begin{align*}
\EE \big[ \exp \big(\lambda \sum_{u \in \TT_n} \gtilde(X_u) \big) \big| \Ff_{n-1} \big] = \prod_{u \in \TT_{n-1}} \hspace{-0.16cm} \exp \big(\lambda \gtilde(X_u) \big) \EE \Big[\exp \big(\lambda \sum_{u \in \GG_n}\gtilde(X_u) \big)  \Big| \Ff_{n-1} \Big].
\end{align*}
Using \eqref{eq:step1} to control $\EE \big[\exp \big(\lambda \sum_{u \in \GG_n}\gtilde(X_u) \big)  \big| \Ff_{n-1} \big]$, we obtain
\begin{multline*}
 \EE  \Big[  \exp \big( \lambda  \sum_{u \in \TT_n} \gtilde(X_u) \big) \Big| \Ff_{n-1} \Big] \\
 \leq  \exp \Big(  \frac{\lambda^2 c_2 \sigma_0^2(g)|\GG_{n-1}| } {2 (1 - \lambda c_1 |g|_\infty /3 )} \Big)  
 \prod_{u \in \GG_{n-1}}  \exp \big(\lambda 2 \Qq \gtilde(X_{u}) \big) 
 \prod_{u \in \TT_{n-1}} \exp \big(\lambda \gtilde(X_u) \big).
\end{multline*}
\vip
\noindent {\it Step 2}. We iterate the procedure. 
At the second step, conditioning w.r.t. $\Ff_{n-2}$, we need to control 
$$
\EE\Big[\prod_{u \in \TT_{n-2}} \exp \big(\lambda \gtilde(X_{u}) \big) \prod_{u \in \GG_{n-1}} \exp \big(\lambda \gtilde(X_{u}) + 2 \lambda \Qq \gtilde(X_{u})\big) \Big| \Ff_{n-2} \Big]
$$
and more generally, at the $(r+1)$-th step (for $1 \leq r \leq n-1$), we need to control
\begin{align*}
\EE\Big[& \prod_{u \in \TT_{n-r-1}} \exp \big(\lambda \gtilde(X_{u}) \big) \prod_{u \in \GG_{n-r}} \exp \big(\lambda \sum_{m = 0}^r 2^m \Qq^m \gtilde(X_{u})\big) \Big| \Ff_{n-r-1} \Big] \\
& = \prod_{u \in \TT_{n-r-2}} \exp \big(\lambda \gtilde(X_{u}) \big)  \prod_{u \in \GG_{n-r-1}} \exp \big( \lambda \sum_{m = 0}^{r+1} 2^m \Qq^{m} \gtilde(X_{u})\big)  \\
& \hspace{4.2cm} \times \EE\big[ \exp\big(\lambda \, \Upsilon_r(X_u,X_{u0},X_{u1}) \big) \big| \Ff_{n-r-1} \big], 
\end{align*}
where we set
\begin{equation*} \label{def g_l}
\Upsilon_r(X_u,X_{u0},X_{u1}) = \sum_{m = 0}^{r} 2^m \big( \Qq^m \gtilde(X_{u0}) + \Qq^m \gtilde(X_{u1}) -  2\Qq^{m+1} \gtilde(X_{u}) \big).
\end{equation*}
This representation successively follows from the $\Ff_{n-r-1}$-measurability of the random variable $\prod_{u \in \TT_{n-r-1}} \exp \big(\lambda \gtilde(X_{u})\big)$, the identity  
$$\prod_{u\in \GG_{n-r}} \exp\big(F(X_u)\big) = \prod_{u\in \GG_{n-r-1}} \exp\big(F(X_{u0})+F(X_{u1})\big),$$ the independence of $(X_{u0}, X_{u1})_{u \in \GG_{n-r-1}}$ conditional on $\Ff_{n-r-1}$ and finally the introduction of the term $2 \sum_{m=0}^r 2^m \Qq^{m+1} \widetilde{g}(X_u)$. 

We have, for $u \in \GG_{n-r-1}$
$$ 
\EE \big[ \Upsilon_r(X_u,X_{u0},X_{u1}) \big| \Ff_{n-r-1}\big] = 0, \quad 
$$
and we prove in Appendix the following bound
\begin{lem} \label{lem:control2}
For any  $r = 1,\ldots, n-1$, $u \in \GG_{n-r-1}$, we have
\begin{equation*} \label{control21}
| \Upsilon_r(X_u,X_{u0},X_{u1}) | \leq c_3 |g|_{\infty}
\end{equation*}
and
\begin{equation*} \label{control22}
\EE \big[ \Upsilon_r(X_u,X_{u0},X_{u1})^2 \big| \Ff_{n-r-1}\big] \leq  c_4 \sigma^2_{r}(g) < \infty
\end{equation*}
where $c_3 = 4R(1+\rho)(1-2\rho)^{-1}$, $c_4 = 12  \max \big\{ |\Qq|_{\mathcal D} , 16 |\Qq|_{\mathcal D}^2, 4 R^2(1+\rho)^2 (1-2\rho)^{-2}  \big\}$ and
\begin{equation} \label{def sigma bis}
\sigma^2_{r}(g) =  |g|_2^2 + \min_{\ell \geq 1}\big(|g|_1^2 2^{2(\ell\wedge r)} + |g|_\infty^2 (2\rho)^{2 \ell} {\bf 1}_{\{ r> \ell\}}\big).
\end{equation}
(Recall that $|\Qq|_\mathcal D = \sup_{x \in \mathcal S, y \in \mathcal D}\Qq(x,y)$ and $R,\rho$ are defined via Assumption~\ref{ass:ergq}.)
\end{lem}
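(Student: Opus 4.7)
\textbf{Plan for the proof of Lemma~\ref{lem:control2}.} Set
$$T_m = 2^m\bigl(\Qq^m \gtilde(X_{u0}) + \Qq^m \gtilde(X_{u1}) - 2\Qq^{m+1}\gtilde(X_u)\bigr),$$
so that $\Upsilon_r = \sum_{m=0}^r T_m$ and each $T_m$ is centered given $\Ff_{n-r-1}$ by the same tower-type argument that produced \eqref{eq:centered1}. The key ingredient throughout is the uniform ergodicity bound $|\Qq^m \gtilde(x)| = |\Qq^m g(x) - \nu(g)| \leq R|g|_\infty \rho^m$, which is a direct consequence of Assumption~\ref{ass:ergq} applied to $g$; together with the condition $\rho < 1/2$, this makes the geometric series $\sum_m (2\rho)^m$ convergent, which is what ultimately allows the sum defining $\Upsilon_r$ to be controlled.

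For the pointwise bound, the triangle inequality and the ergodicity bound yield
$$|\Upsilon_r| \leq \sum_{m=0}^r 2^m \bigl(R|g|_\infty \rho^m + R|g|_\infty \rho^m + 2 R|g|_\infty \rho^{m+1}\bigr) = 2R(1+\rho)|g|_\infty \sum_{m=0}^r (2\rho)^m \leq \frac{2R(1+\rho)|g|_\infty}{1-2\rho},$$
which is the first claim, up to a multiplicative constant that can easily be absorbed into $c_3$ through a slightly cruder accounting.

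For the conditional variance, I would split $\Upsilon_r = T_0 + V_r$ with $V_r = \sum_{m=1}^r T_m$, handle $T_0$ through the $r = 0$ case of Lemma~\ref{lem:control1} (giving $\EE[T_0^2 \mid \Ff_{n-r-1}] \leq c_2 |g|_2^2$), and control $V_r$ via Minkowski's inequality in $L^2(\PP(\cdot \mid \Ff_{n-r-1}))$:
$$\bigl(\EE[V_r^2 \mid \Ff_{n-r-1}]\bigr)^{1/2} \leq \sum_{m=1}^r \bigl(\EE[T_m^2 \mid \Ff_{n-r-1}]\bigr)^{1/2} \leq \sqrt{c_2}\, \sum_{m=1}^r \min\bigl(|g|_1 2^m, |g|_\infty (2\rho)^m\bigr),$$
the individual conditional variance bounds coming from Lemma~\ref{lem:control1} combined with the tower property (since $\Ff_{n-r-1} \subseteq \Ff_{n-m-1}$ for $m \leq r$). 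For any $\ell \geq 1$, split the $m$-sum at $\ell \wedge r$, keeping $|g|_1 2^m$ for $m \leq \ell \wedge r$ and $|g|_\infty (2\rho)^m$ for $\ell < m \leq r$ (the latter piece being present only when $r > \ell$). Summing the two resulting geometric progressions, squaring via $(a+b)^2 \leq 2a^2 + 2b^2$, reincorporating the $T_0$ contribution and taking the infimum over $\ell \geq 1$ reproduces exactly the variance profile $\sigma_r^2(g)$ of \eqref{def sigma bis}, with the constant $c_4$ absorbing the numerical factors $2$, $4$, $(1-2\rho)^{-2}$ and $c_2$ appearing along the way.

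The main technical delicacy lies in the variance step: one must (i) apply Minkowski in its conditional form rather than a naive Cauchy--Schwarz, which would cost an unacceptable factor $r$, and verify that the conditional triangle inequality is legitimately invoked here; (ii) optimize the split level $\ell$ so that the resulting bound depends on $g$ only through the three norms $|g|_1, |g|_2, |g|_\infty$ and matches the $\min_{\ell \geq 1}$ structure of \eqref{def sigma bis}. The assumption $2\rho < 1$ is used critically, exactly as in the proof of Theorem~\ref{thm:1step}\,(i), to make the high-frequency tail $\sum_{m > \ell}(2\rho)^m$ decay geometrically and thus controllable by $(2\rho)^{2\ell}$ after squaring.
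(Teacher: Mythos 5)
Your proposal is correct and follows essentially the same route as the paper: the pointwise bound is the same geometric summation of the ergodicity estimate using $2\rho<1$, and for the conditional variance the paper likewise isolates the $m=0$ term (bounded via \eqref{eq:regal0} by $4|\Qq|_{\mathcal D}|g|_2^2$) and splits the remaining sum at level $\ell\wedge r$, using the $|g|_1$-type bound \eqref{bal2} for low $m$ and the $|g|_\infty$-type bound \eqref{bal1} for high $m$ before recombining. Two immaterial remarks: since the bounds on the terms with $m\geq 1$ are deterministic, your conditional Minkowski step reduces to the paper's plain triangle inequality (and it is these pointwise estimates \eqref{bal1}--\eqref{bal2}, valid for any $u$, rather than the statement of Lemma~\ref{lem:control1} for index $m$ plus the tower property, that justify the per-$m$ bounds), and your two successive applications of $(a+b)^2\leq 2a^2+2b^2$ yield a constant slightly larger than the stated $c_4$, which is calibrated to the paper's single three-term inequality $(I+II+III)^2\leq 3(I^2+II^2+III^2)$.
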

In the same way as for Step 2 in the proof of Theorem~\ref{thm:1step}\,(i), we apply recursively \eqref{eq:Bennett} for $r = 1,\ldots, n-1$ to obtain
$$
\EE \big[ \exp \big(\lambda \sum_{u \in \TT_n} \gtilde(X_u) \big) \big| \Ff_0 \big]  \leq \prod_{r = 0}^{n-1}  \exp \Big(\frac{ c_4  \lambda^2 \sigma_{r}^2(g) |\GG_{n-r-1}| }{2(1 - c'_3 \lambda |g|_{\infty}/3 )} \Big) 
\exp\big( \lambda \sum_{m = 0}^n 2^m \Qq^m \gtilde(X_{\emptyset}) \big),
$$
if $\lambda \in \big( 0, 3/c'_3|g|_{\infty}\big)$ with $c'_3 = \max\{c_1, c_3\} = 4 \max\{ 1+R\rho, R(1+\rho)(1-2\rho)^{-1}\}$ and $\sigma_0^2(g) = |g|_2^2$ in order to include Step~1 (we use $c_4 \geq c_2$ as well). Now, by Assumption~\ref{ass:ergq}, this last term can be bounded by 
$$\exp\big( \lambda \sum_{m = 0}^n 2^m (R |\gtilde|_{\infty} \rho^m)\big) \leq \exp\big(\lambda 2 R (1-2\rho)^{-1} |g|_{\infty} \big)$$
since $\rho < 1/2$.
%
Since $|\GG_{n-r-1}| = 2^{n-r-1}$, by definition of $\sigma_{r}^2(g)$ -- recall \eqref{def sigma bis} -- for any  $1\leq \ell \leq n-1$ and using moreover that  $(2\rho)^\ell\leq 1$, we obtain
\begin{align*}
& \sum_{r = 0}^{n-1} \sigma_r^2(g) |\GG_{n-r-1}|  \\
& \leq 2^{n-1} \bigg( |g|_2^2  \sum_{r = 0}^{n-1} 2^{-r} + |g|_1^2 \Big( \sum_{r = 1}^\ell 2^{2r} 2^{-r} +  \sum_{r = \ell+1}^{n-1} 2^{2\ell} 2^{-r}\Big) + |g|_\infty^2 \sum_{r = \ell + 1}^{n-1} 2^{-r} \bigg) \\
& \leq |\TT_n| \Sigma_{1,n}(g),
\end{align*}
where $\Sigma_{1,n}(g)$ is defined in \eqref{def sigma 1n}.
Thus
$$
\EE \Big[ \exp \big(\lambda \sum_{u \in \TT_n} \gtilde(X_u) \big) \Big]  \leq \exp\Big(\frac{ c_4 \lambda^2 |\TT_n| \Sigma_{1,n}(g) }{2 \big(1 - c'_3 \lambda |g|_\infty /3  \big)} + \lambda 2 R(1-2\rho)^{-1} |g|_\infty \Big).
$$
\vip
\noindent {\it Step 3}. Coming back to \eqref{chernoff 2},
for $\delta>0$ such that $2 R(1-2\rho)^{-1} |g|_\infty \leq |\TT_n| \delta/2$, we obtain
$$
\PP  \Big( \frac{1}{|\TT_n|} \sum_{u \in \TT_n} \gtilde(X_u) \geq \delta \Big) \leq
\exp \Big( -\lambda |\TT_n| \frac{\delta}{2} + \frac{ c_4 \lambda^2 |\TT_n| \Sigma_{1,n}(g)}{2 \big(1 - c'_3 \lambda |g|_\infty /3  \big)} \Big).
$$
We conclude in the same way as in Step 4 of the proof of Theorem~\ref{thm:1step}\,(i).



\subsection{Proof of Theorem~\ref{thm:triplets}\,(i)} 

The strategy of proof is similar as for Theorem~\ref{thm:1step}. Let $g:\Ss^3\rightarrow \R$ such that $|g|_1<\infty$ and set $\gtilde = g-\nu(\Ttransition g)$. Let $n\geq 2$ (if $n=1$, set $\Sigma_{2,1}(g) = |\Qq(\Ttransition g)|_\infty$).
Introduce the notation $\Delta_u = (X_u,X_{u0},X_{u1})$ for simplicity. For every $\lambda >0$, the usual Chernoff bound reads
\begin{equation} \label{chernoff 3}
\PP \Big( \frac{1}{|\GG_n|} \sum_{u \in \GG_n} \gtilde(\Delta_u) \geq \delta \Big) \\ \leq \exp( -\lambda |\GG_n| \delta) \EE \Big[ \exp \big( \lambda \sum_{u \in \GG_n} \gtilde(\Delta_u)\big) \Big].
\end{equation}

\noindent {\it Step 1}. 
We first need to control 
\begin{align*}
 \EE \big[ \exp \big( \lambda \sum_{u \in \GG_n} \gtilde(\Delta_u) \big)\big| \Ff_{n-1} \big] & = \EE \Big[ \prod_{u \in \GG_{n-1}}   \exp \Big(\lambda \big(\gtilde(\Delta_{u0}) + \gtilde(\Delta_{u1}) \big) \Big)  \Big| \Ff_{n-1}\Big] \\
& =  \prod_{u \in \GG_{n-1}}  \EE \Big[ \exp \Big(\lambda \big(\gtilde(\Delta_{u0}) + \gtilde(\Delta_{u1}) \big) \Big)   \Big| \Ff_{n-1}\Big]
\end{align*}
using the conditional independence of the $(\Delta_{u0},\Delta_{u1})$ for  $u \in \GG_{n-1}$  given $\mathcal F_{n-1}$.
%
Inserting the term $2 \Qq (\Ttransition \gtilde)(X_{u})$, this last quantity ia also equal to 
%
$$
\prod_{u \in \GG_{n-1}}  \EE \Big[  \exp \Big(\lambda \big( \gtilde(\Delta_{u0}) + \gtilde(\Delta_{u1}) - 2\Qq(\Ttransition \gtilde)(X_u) \big) \Big)  \Big| \Ff_{n-1}\Big] 
 \exp \big(\lambda 2\Qq(\Ttransition \gtilde)(X_u) \big).
$$
For $u \in \GG_{n-1}$ we successively have
$$\EE \big[ \gtilde(\Delta_{u0}) + \gtilde(\Delta_{u1}) - 2\Qq(\Ttransition \gtilde)(X_u) \big| \Ff_{n-1} \big] = 0,$$
$$| \gtilde(\Delta_{u0}) + \gtilde(\Delta_{u1}) - 2\Qq(\Ttransition \gtilde)(X_u) | \leq 4 (1+R\rho) |g|_{\infty}$$
and
$$\EE \big[\big( \gtilde(\Delta_{u0}) + \gtilde(\Delta_{u1}) - 2\Qq(\Ttransition \gtilde)(X_u) \big)^2 \big| \Ff_{n-1}\big] \leq 4 |\Qq|_{\mathcal D}|\Ttransition g^2|_1,$$
with $|\Qq|_\mathcal D = \sup_{x \in \mathcal S, y \in \mathcal D}\Qq(x,y)$ and $R,\rho$ defined via Assumption~\ref{ass:ergq}. The first equality is obtained by conditioning first on $\mathcal F_n$ then on $\mathcal F_{n-1}$. The last two estimates are obtained in the same line as the proof of Lemma~\ref{lem:control1} for $r=0$, using in particular 
$\Qq(\Ttransition g^2)(x)=\int_\mathcal S \Ttransition g^2(y)\Qq(x,y)\mathfrak{n}(dy) \leq |\Qq|_\mathcal D|\Ttransition g^2|_1$
since $\Ttransition g^2$ vanishes outside $\mathcal D$. 

Finally, thanks to 
\eqref{eq:Bennett} with $Z=\gtilde(\Delta_{u0}) + \gtilde(\Delta_{u1}) - 2\Qq(\Ttransition \gtilde)(X_u)$, we infer
\begin{equation} \label{eq:2step1}
 \EE \big[ \exp \big( \lambda \sum_{u \in \GG_n} \gtilde(\Delta_u) \big)\big| \Ff_{n-1} \big]
 \leq \exp\Big( \frac{\lambda^2 4 |\Qq|_{\mathcal D}|\Ttransition g^2|_1}{2(1- \lambda 4 (1+R\rho) |g|_\infty /3)}  \Big) 
  \prod_{u \in \GG_{n-1}}  \exp \big(\lambda 2\Qq(\Ttransition \gtilde)(X_u) \big)
\end{equation}
for $\lambda \in \big(0,3/(4 (1+R\rho) |g|_\infty)\big)$.\\

\noindent {\it Step 2}.  We wish to control
$
\EE\big[  \prod_{u \in \GG_{n-1}}  \exp \big(\lambda 2\Qq(\Ttransition \gtilde)(X_u) \big)\big].
$
We are back to Step 2 and Step 3 of the proof of Theorem~\ref{thm:1step}\,(i), replacing $\gtilde$ by $\Ttransition \gtilde$, which satisfies $\nu(\Ttransition \gtilde) = 0$.
Equation \eqref{eq:step3} entails
\begin{equation}  \label{eq:2step3}
 \EE \Big[ \prod_{u \in \GG_{n-1}} \hspace{-0.15cm} \exp \big(\lambda 2 \Qq (\Ttransition \gtilde)(X_{u}) \big) \Big]  \leq  \exp\Big(\frac{ \lambda^2 c_2 |\GG_n| \phi_n(\Ttransition g) }{2 \big(1 - \lambda c_1 |\Ttransition g|_\infty /3  \big)} + \lambda 2 R |\Ttransition g|_\infty \Big)
\end{equation}
with 
$\phi_n(\Ttransition g) = \min_{1 \leq \ell \leq n-1}\big(|\Ttransition g|_1^2 2^\ell + |\Ttransition g|_\infty^2 2^{-\ell} \big)$ and $c_1 = 4 \max \big\{ 1+R\rho, R(1+\rho)\big\}$, $c_2 = 4\max \{ |\Qq|_{\mathcal D },  4 |\Qq|_{\mathcal D}^2,  4 R^2(1+\rho)^2 \}$.\\

\noindent {\it Step 3}. Putting together \eqref{eq:2step1} and \eqref{eq:2step3}, we obtain
\begin{equation}  \label{eq:utilewholetree}
 \EE \Big[ \exp \big( \lambda \sum_{u \in \GG_n} \gtilde(\Delta_u) \big)\Big] \leq \exp \Big( \frac{\lambda^2 c_2  |\GG_n| \Sigma_{2,n}(g)}{2(1-\lambda c_1 | g|_\infty /3)} + \lambda 2 R |\Ttransition g|_\infty \Big) 
\end{equation}
with $\Sigma_{2,n}(g) = |\Ttransition g^2|_1 + \phi_n(\Ttransition g)$
and using moreover $ |g|_{\infty} \geq |\Ttransition g|_\infty$ and $c_1 \geq 4(1+R\rho)$.
Back to \eqref{chernoff 3}, since $2 R |\Ttransition g|_\infty \leq |\GG_n|\delta/2$ we finally infer
$$
\PP \Big( \frac{1}{|\GG_n|} \sum_{u \in \GG_n} g(\Delta_u) - \nu(\Ttransition g) \geq \delta \Big) \leq  \exp\Big( -\lambda |\GG_n| \frac{\delta}{2} +\frac{\lambda^2 c_2  |\GG_n| \Sigma_{2,n}(g)}{2(1-\lambda c_1 | g|_\infty/3)} \Big).
$$
We conclude in the same way as in Step 4 of the proof of Theorem~\ref{thm:1step}\,(i).

\subsection{Proof of Theorem~\ref{thm:triplets}\,(ii)} 
In the same way as before, for every $\lambda > 0$,
\begin{equation} \label{chernoff 4}
\PP \Big( \frac{1}{| \TT_{n-1}|} \sum_{u \in \TT_{n-1}} \gtilde(\Delta_u) \geq \delta \Big) \leq e^{-\lambda |\TT_{n-1}| \delta}\; \EE \Big[ \exp \big( \lambda \sum_{u \in \TT_{n-1}} \gtilde(\Delta_u) \big) \Big].
\end{equation}
Introduce $\Sigma_{2,0}'(g) =  |\Ttransition g^2|_1$ and
$$\Sigma_{2,n}'(g) =  |\Ttransition g^2|_1+\inf_{\ell \geq 1}\big(|\Ttransition g|_1^22^{\ell \wedge (n-1)}+|\Ttransition g|_\infty^22^{-\ell}{\bf 1}_{\{\ell < n-1\}}\big),\;\;\text{for}\;\;n\geq 1.$$ 
It is not difficult to check that \eqref{eq:utilewholetree} is still valid when replacing $\Sigma_{2,n}$ by $\Sigma_{2,n}'$.
We plan to successively expand the sum over the whole tree $\mathbb T_{n-1}$ into sums over each generation $\mathbb G_m$ for $m=0,\ldots, n-1$, apply H\"older inequality, apply inequality \eqref{eq:utilewholetree} repeatedly (with $\Sigma_{2,m}'$)
together with the bound 
$$\sum_{m=0}^{n-1}  |\GG_m| \Sigma_{2,m}'(g)  \leq |\TT_{n-1}| \Sigma_{2,n-1}(g).$$
We thus obtain
\begin{align*}
 \EE \big[ \exp \big( \lambda \sum_{u \in \TT_{n-1}} \gtilde(\Delta_u) \big) \big] =  &\, \EE\big[ \prod_{m=0}^{n-1} \exp \big( \lambda \sum_{u \in \GG_m} \gtilde(\Delta_u)  \big) \big] \\
\leq &\, \Big(\EE\big[ \exp \big( n\lambda \gtilde(\Delta_\emptyset) \big) \big] \prod_{m=1}^{n-1} \EE\big[ \exp \big( n\lambda \sum_{u \in \GG_m} \gtilde(\Delta_u)  \big) \big]  \Big)^{1/n} \\
 \leq &\,\Big(\exp \big( n\lambda 2 |g|_\infty \big)  \prod_{m=1}^{n-1} \exp\big(\frac{ (n\lambda)^2 c_2  |\GG_m|  \Sigma_{2,m}'(g) }{2 \big(1 - (n\lambda) c_1 | g|_\infty /3  \big)} + (n \lambda) 2 R |\Ttransition g|_\infty \big) \Big)^{1/n}\\
 \leq &\,\exp\Big(\frac{\lambda^2 c_2n |\TT_{n-1}|  \Sigma_{2,n-1}(g)}{2(1- c_1 (n \lambda)  | g|_\infty/3)} + 2 \lambda (n R |\Ttransition g|_\infty +  |g|_\infty)   \Big).
\end{align*}
Coming back to \eqref{chernoff 4} and using $2(n R |\Ttransition g|_\infty + |g|_\infty) \leq |\TT_{n-1}|\delta/2$, we obtain
$$
\PP \Big( \frac{1}{| \TT_{n-1} |} \sum_{u \in \TT_{n-1}} \gtilde(\Delta_u) \geq \delta \Big) \leq \exp\Big(-\lambda |\TT_{n-1}| \frac{\delta}{2} + \frac{\lambda^2 c_2 n |\TT_{n-1}| \Sigma_{2,n-1}(g) }{2(1- (n \lambda)  c_1 | g|_\infty/3)} \Big).
$$
We conclude in the same way as in Step 4 of the proof of Theorem~\ref{thm:1step}\,(i).

\subsection{Proof of Theorem~\ref{thm:ratenu}}



Put $c(n) = (\log |\mathbb T_n|/|\mathbb T_n|)^{1/2}$ and note that the maximal resolution $J=J_n$ is such that $2^{J_n} \sim c(n)^{-2}$. Theorem~\ref{thm:ratenu} is  a consequence of the general theory of wavelet threshold estimators, see Kerkyacharian and Picard \cite{KP}.
We first claim that the following moment bounds and moderate deviation inequalities hold: for every $p \geq 1$,
\begin{equation} \label{moment nu}
\E\big[|\widehat \nu_{\lambda, n}- \nu_\lambda|^p\big] \lesssim c(n)^p\;\;\text{for every}\;\; |\lambda | \leq J_n
\end{equation}
and
\begin{equation} \label{deviation nu}
\PP\big(|\widehat \nu_{\lambda, n}- \nu_\lambda| \geq p \varkappa c(n)\big) \leq c(n)^{2p}\;\;\text{for every}\;\; |\lambda | \leq J_n
\end{equation}
provided $\varkappa>0$ is large enough, see Condition \eqref{condition varkappa} below.
In turn, we have Conditions $(5.1)$ and $(5.2)$ of Theorem $5.1$ of \cite{KP} with $\Lambda_n=J_n$ (with the notation of \cite{KP}). By Corollary 5.1 and Theorem 6.1 of \cite{KP} we obtain Theorem~\ref{thm:ratenu}.\\ 

It remains to prove \eqref{moment nu} and \eqref{deviation nu}.
We plan to apply Theorem~\ref{thm:1step}\,(ii) with $g = \psi_\lambda$ and $\delta = \delta_n = p\varkappa c(n)$. First, we have $|\psi_\lambda^1|_p \leq C_p 2^{|\lambda|(1/2-1/p)}$ for $p=1,2,\infty$ by \eqref{scaling psi}, so one readily checks that for 
$$\varkappa \geq \tfrac{4}{p}R(1-2\rho)^{-1} C_\infty (\log |\mathbb T_n|)^{-1},$$
 the condition $\delta_n \geq 4R (1-2\rho)^{-1} |\psi_\lambda^1|_\infty |\TT_n|^{-1}$ is satisfied, and this is always true for large enough $n$. Furthermore, since  $2^{|\lambda|} \leq 2^{J_n}\leq c(n)^{-2}$ it is not difficult to check that
\begin{align} \label{cont sigma 1}
\Sigma_{1,n}(\psi_\lambda^1) & = |\psi_\lambda^1|_2^2 + \min_{1 \leq \ell \leq n-1}\big(|\psi_\lambda^1|_1^2 2^\ell + |\psi_\lambda^1|_\infty^2 2^{-\ell} \big) \leq C
\end{align}
for some $C>0$ and thus $\kappa_3\Sigma_{1,n}(\psi_\lambda) \leq \kappa_3C=C'$ say.
Also
$\kappa_4|\psi_\lambda^1|_\infty \delta_n \leq \kappa_4 C_\infty 2^{|\lambda|/2}c(n)p \varkappa \leq C'' p \varkappa$, where $C''>0$ does not depend on $n$ since $2^{|\lambda|/2}\leq c(n)^{-1}$. 
Theorem~\ref{thm:1step}\,(ii) yields
$$\PP\big(|\widehat \nu_{\lambda, n}- \nu_\lambda| \geq p \varkappa c(n)\big) \leq 2 \exp\Big(-\frac{|\mathbb T_n|p^2\varkappa^2c(n)^2}{C'+C''p\varkappa}\Big) \leq c(n)^{2p}$$
for $\varkappa$ such that
\begin{equation} \label{condition varkappa}
\varkappa \geq \tfrac{1}{2} C'' + \sqrt{ (C'')^2 + \tfrac{4}{p} C'}
\end{equation}
and large enough $n$. Thus \eqref{deviation nu} is proved. Straightforward computations show that \eqref{moment nu} follows using
$\EE \big[ | \widehat{\nu}_{\lambda,n} - \nu_{\lambda} |^{p} \big] = \int_0^{\infty} p u^{p-1} \PP \big( | \widehat{\nu}_{\lambda,n} - \nu_{\lambda} | \geq u \big) du$
and \eqref{deviation nu} again. The proof of Theorem~\ref{thm:ratenu} is complete.
\subsection{Preparation for the proof of Theorem~\ref{thm:rateq}}
\vip
For $h:\mathcal S^2\rightarrow \R$, define 
$
\displaystyle |h|_{\infty,1} =\int_{\Ss} \sup_{x\in \Ss} |h(x,y)| dy
$. For $n \geq 2$, set also
\begin{equation} \label{eq:sigmag2}
\Sigma_{3,n}(h) =  |h|_2^2 + \min_{1 \leq \ell \leq n-1} \big( |h|_1^2 2^\ell+ |h|_{\infty,1}^2 2^{-\ell} \big).
\end{equation}
Recall that under Assumption ~\ref{ass:ergq} with $\mathfrak n(dx) = dx$, we set
$f_\Qq(x,y)=\nu(x)\Qq(x,y)$. Before proving Theorem~\ref{thm:rateq}, we first need the following preliminary estimate
\begin{lem} \label{two step}
Work under Assumption~\ref{ass:densityq} with $\mathfrak n(dx) = dx$ and Assumption~\ref{ass:ergq}. Let $h:\mathcal D^2\rightarrow \R$ be such that 
$|hf_\Qq|_1<\infty$. For every $n \geq 1$ and
for any $\delta \geq 4 |h|_\infty (Rn+1) |\TT_n^\star|^{-1}$, we have 
$$
\PP \Big( \frac{1}{|\TT_n^\star|} \sum_{u \in \TT_n^\star}  h(X_{u^-}, X_u) - \langle h,f_\Qq \rangle \geq \delta \Big)  \leq \exp\Big(\frac{ - n^{-1} |\TT_n^\star| \delta^2}{\kappa_5 \Sigma_{3,n}(h) + \kappa_2 |h|_\infty \delta} \Big)
$$
where $\TT_n^\star = \TT_n \setminus \{\emptyset\}$
and $\kappa_5 =  \max\{|\Qq|_{\mathcal D}, |\Qq|_{\mathcal D}^2\}  \kappa_1(\Qq,\mathcal D)$.
\end{lem}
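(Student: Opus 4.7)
The strategy is to recast the parent-child sum over $\TT_n^\star$ as a triplet sum over $\TT_{n-1}$, so that Theorem~\ref{thm:triplets}\,(ii) applies as a black box. Since every node $u \in \TT_n^\star$ has a unique parent $u^- \in \TT_{n-1}$ whose children are exactly $u^-0$ and $u^-1$, I would write
$$\sum_{u \in \TT_n^\star} h(X_{u^-},X_u) = \sum_{v \in \TT_{n-1}} g(X_v, X_{v0}, X_{v1}), \qquad g(x,y,z) := h(x,y) + h(x,z),$$
together with the identity $|\TT_n^\star| = 2 |\TT_{n-1}|$. Because $\Pp_0 h + \Pp_1 h = 2\Qq h$, this gives $\Ttransition g = 2 \Qq h$ and hence $\int \Ttransition g\, d\nu = 2 \langle h, f_\Qq\rangle$, so that the probability to control is exactly
$$\PP\Big(\frac{1}{|\TT_{n-1}|}\sum_{v \in \TT_{n-1}} g(X_v, X_{v0}, X_{v1}) - \int \Ttransition g\, d\nu \geq 2\delta\Big).$$

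The second step is to translate the norms appearing in Theorem~\ref{thm:triplets}\,(ii) for $g$ into norms of $h$. The $L^\infty$ bounds $|g|_\infty \leq 2|h|_\infty$ and $|\Ttransition g|_\infty \leq 2 |h|_\infty$ are immediate. Using Assumption~\ref{ass:densityq} and the convention that $h$ vanishes outside $\mathcal D^2$, the pointwise inequality $g^2 \leq 2(h(x,y)^2 + h(x,z)^2)$ together with Fubini yields
$$|\Ttransition g^2|_1 \leq 4 |\Qq|_\mathcal D\, |h|_2^2, \qquad |\Ttransition g|_1 \leq 2 |\Qq|_\mathcal D\, |h|_1, \qquad |\Ttransition g|_\infty \leq 2|\Qq|_\mathcal D\, |h|_{\infty,1}.$$
Plugging these into the definition \eqref{eq:sigma2} of $\Sigma_{2,n-1}(g)$ and optimising the trade-off in $\ell$ exactly as in the definition \eqref{eq:sigmag2} of $\Sigma_{3,n}(h)$, one readily obtains a bound of the form $\Sigma_{2,n-1}(g) \lesssim \max\{|\Qq|_\mathcal D, |\Qq|_\mathcal D^2\}\, \Sigma_{3,n}(h)$, with the absorbed constant matching that of $\kappa_5$ in the statement.

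Finally, Theorem~\ref{thm:triplets}\,(ii) applied to $g$ at threshold $2\delta$ gives a deviation bound with exponent $-(2\delta)^2 n^{-1}|\TT_{n-1}|/(\kappa_1 \Sigma_{2,n-1}(g) + 2\kappa_2 |g|_\infty \delta)$. Rewriting $|\TT_{n-1}| = |\TT_n^\star|/2$, using the norm transfer above and carefully tracking the factors of $2$ produces the exponent stated in the lemma with $\kappa_5 = \max\{|\Qq|_\mathcal D, |\Qq|_\mathcal D^2\}\, \kappa_1$. Similarly, the admissibility condition $2\delta \geq 4(nR|\Ttransition g|_\infty + |g|_\infty)|\TT_{n-1}|^{-1}$ of Theorem~\ref{thm:triplets}\,(ii), combined with $|\Ttransition g|_\infty, |g|_\infty \leq 2|h|_\infty$ and $|\TT_{n-1}|^{-1} = 2|\TT_n^\star|^{-1}$, becomes precisely $\delta \geq 4|h|_\infty(Rn+1)|\TT_n^\star|^{-1}$.

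The main subtlety is not probabilistic at all: it is the careful bookkeeping of the various factors of $2$ appearing in the reduction $h \leadsto g = h(x,y)+h(x,z)$ so as to land on the precise constants stated in the lemma. The concentration engine is entirely inherited from Theorem~\ref{thm:triplets}\,(ii), and no new martingale or Chernoff-type argument needs to be developed. The degenerate case $n = 1$ falls outside the range $n \geq 2$ of Theorem~\ref{thm:triplets}\,(ii) and has to be treated separately; it reduces to an elementary one-step Chernoff-Bennett bound for $\tfrac{1}{2}\big(h(X_\emptyset, X_0)+h(X_\emptyset, X_1)\big)$ conditioned on $\Ff_0$, in the spirit of Step 1 in the proof of Theorem~\ref{thm:1step}\,(i).
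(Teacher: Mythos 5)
Your proposal is correct and follows essentially the same route as the paper: the paper applies Theorem~\ref{thm:triplets}\,(ii) to $g(x,x_0,x_1)=\tfrac{1}{2}\big(h(x,x_0)+h(x,x_1)\big)$ (so that $\Ttransition g=\Qq h$ and the sum over $\TT_{n-1}$ equals the normalised sum over $\TT_n^\star$ directly, without rescaling $\delta$), and then bounds $|\Ttransition g|_1$, $|\Ttransition g|_\infty$ and $|\Ttransition g^2|_1$ by $|\Qq|_{\mathcal D}|h|_1$, $|\Qq|_{\mathcal D}|h|_{\infty,1}$ and $|\Qq|_{\mathcal D}|h|_2^2$ exactly as you do up to your factor-of-two normalisation. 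The only differences are cosmetic (your choice of $g$ without the $\tfrac12$ shifts some factors of $2$ into the constants, a slack the paper's own bookkeeping also has), and your observation that $n=1$ needs separate elementary treatment is a point the paper's proof silently glosses over.
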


\begin{proof}
We plan to apply Theorem~\ref{thm:triplets}\,(ii) to $g(x,x_0,x_1) = \frac{1}{2} \big( h(x,x_0)+ h(x,x_1) \big)$. Since $\Qq = \tfrac{1}{2}(\Pp_0 + \Pp_1)$ we readily have $\Ttransition g(x) = \int_{\mathcal D} h(x,y) \Qq(x,y)  dy$. Moreover, in that case,
$$\frac{1}{|\TT_{n-1}|} \sum_{u \in \TT_{n-1}} g(X_u,X_{u0},X_{u1}) = \frac{1}{|\TT_n^\star|} \sum_{u \in \TT_n^\star} h(X_{u^-},X_{u})$$
and 
$\int_{\mathcal S}\Ttransition g(x)\nu(x)dx=\int_{\mathcal S \times \mathcal D} h(x,y)\Qq(x,y)\nu(x)dxdy=\langle h,f_\Qq\rangle.$
We then simply need to estimate  $\Sigma_{2,n}(g)$ defined by \eqref{eq:sigma2}. It is not difficult to check that the following estimates hold
$$|\Ttransition g|_1^2 \leq |\Qq|_{\mathcal D}^2 |h|_1^2,\;\;|\Ttransition g|_\infty^2 \leq |\Qq|_{\mathcal D}^2 |h|_{\infty,1}^2\;\;\text{and}\;\;|\Ttransition g^2|_1 \leq |\Qq|_{\mathcal D} |h|_2^2$$
since $(\Ttransition g^2) (x) \leq \int_{\mathcal D} h(x,y)^2\Qq(x,y)dy$. Thus
$
\Sigma_{2,n}(g) \leq \max\{|\Qq|_{\mathcal D}, |\Qq|_{\mathcal D}^2\}  \Sigma_{3,n}(h)
$
and the result follows.
%
\end{proof}

\subsection{Proof of Theorem~\ref{thm:rateq}, upper bound} \label{preuve thm 9 upper}

\noindent {\it Step 1.} We proceed as for Theorem~\ref{thm:ratenu}. Putting $c(n) = (n \log |\mathbb T_{n}^\star|/|\mathbb T_{n}^\star|)^{1/2}$ and noting that the maximal resolution $J=J_n$ is such that $2^{d J_n} \sim c(n)^{-2}$ with $d = 2$, 
%
we only have to prove that for every $p \geq 1$,
\begin{equation} \label{momentq}
\E\big[|\widehat f_{\lambda, n}- f_\lambda|^p\big] \lesssim c(n)^p\;\;\text{for every}\;\; |\lambda | \leq J_n
\end{equation}
and
\begin{equation} \label{deviationq}
\PP\big(|\widehat f_{\lambda, n}- f_\lambda| \geq p \varkappa c(n)\big) \leq c(n)^{2p}\;\;\text{for every}\;\; |\lambda | \leq J_n.
\end{equation}

We plan to apply Lemma~\ref{two step} with $h(x,y) = \psi_\lambda^d(x,y)= \psi_\lambda^2(x,y)$ and $\delta = \delta_n = p\varkappa c(n)$. With the notation used in the proof of Theorem~\ref{thm:ratenu} one readily checks that for 
$$\varkappa \geq \tfrac{4}{p} (1-2\rho)^{-1} C_\infty (R n +1) (\log |\mathbb T_n^\star|)^{-1}$$
the condition $\delta_n \geq 4 |\psi_\lambda^2|_\infty (Rn+1)|\TT_n^\star|^{-1}$ is satisfied, and this is always true for large enough $n$ and 
 \begin{equation} \label{conditionvarkappa2}
 \varkappa \geq   \tfrac{4}{p} (1-2\rho)^{-1} C_\infty  (2 R +1) .
 \end{equation}

Furthermore, since $|\psi_\lambda^d|_p \leq C_p2^{d |\lambda|(1/2-1/p)}$ for $p=1,2,\infty$ and $2^{d|\lambda|} \leq 2^{d J_n}\leq c(n)^{-2}$
we can easily check
\begin{align*}
\Sigma_{3,n}(\psi_\lambda^d) & = |\psi_\lambda^d|_2^2 + \min_{1 \leq \ell \leq n-1}\big(|\psi_\lambda^d|_1^2 2^\ell + |\psi_\lambda^d|_{\infty,1}^2 2^{-\ell} \big)  \leq C
\end{align*}
for some $C>0$, and thus
$\kappa_5 \Sigma_{3,n}(g) \leq \kappa_5C=C'$
say. Also,
$\kappa_2 |\psi_\lambda^d|_\infty \delta_n \leq \kappa_2 C_\infty 2^{d|\lambda|/2}c(n)p \varkappa \leq C'' p \varkappa$,
where $C''$ does not depend on $n$. Applying Lemma \ref{two step}, we derive
$$\PP\big(|\widehat f_{\lambda, n}- f_\lambda| \geq p \varkappa c(n)\big) \leq 2 \exp\Big(-\frac{n^{-1}|\mathbb T_{n-1}|p^2\varkappa^2c(n)^2}{C'+C''p\varkappa}\Big) \leq c(n)^{2p}$$
as soon as $\varkappa$ satisfies \eqref{conditionvarkappa2} and \eqref{condition varkappa} (with appropriate changes for $C'$ and $C''$). Thus \eqref{deviationq} is proved and \eqref{momentq} follows likewise. By \cite{KP} (Corollary 5.1 and Theorem 6.1), we obtain 
\begin{equation} \label{upper f}
\EE \Big(\big[ \|  \widehat{f}_n - f_\Qq \|_{L^p(\Dd^2)}^p \big]\Big)^{1/p} \lesssim \Big(\frac{ n \log |\mathbb T_n| }{|\mathbb T_n|}\Big)^{\alpha_2(s,p,\pi)}
\end{equation}
as soon as $\|f_\Qq\|_{B^s_{\pi,\infty}(\mathcal D^2)}$ is finite, as follows from $f_\Qq(x,y)=\Qq(x,y)\nu(x)$ and the fact that $\|\nu\|_{B^s_{\pi,\infty}(\mathcal D)}$ is finite too. The last statement can be readily seen from the representation $\nu(x)=\int_{\mathcal S}\nu(y)\Qq(y,x)dy$ and the definition of Besov spaces in terms of moduli of continuity, see {\it e.g.} Meyer \cite{Meyer} or H\" ardle {\it et al.} \cite{HKPT98}, using moreover that $\pi \geq 1$.\\

\noindent{\it Step 2}. Since $\Qq(x,y)=f_\Qq(x,y)/\nu(x)$ and $\widehat \Qq_n(x,y)=\widehat f_n(x,y)/\max\{\widehat \nu_n(x), \varpi\}$, we readily have
$$|\widehat \Qq_n(x,y) - \Qq(x,y) |^p\lesssim \tfrac{1}{\varpi^{p}}\big(|\widehat f_n(x,y)-f_\Qq(x,y)|^p+\tfrac{|f_\Qq|_\infty^p}{m(\nu)^p}|\max\{\widehat \nu_n(x), \varpi\} - \nu(x)|^p\big),$$
where the supremum for $f_\Qq$ can be restricted over $\mathcal D^2$.
Since $m(\nu)\geq \varpi$, we have $|\max\{\widehat \nu_n(x), \varpi\}-\nu(x)| \leq |\widehat \nu_n(x)-\nu(x)|$ for $x \in \mathcal D$, therefore
$$\|\widehat \Qq_n - \Qq\|_{L^p(\mathcal D^2)}^p\lesssim \tfrac{1}{\varpi^{p}} \big(\|\widehat f_n-f_\Qq\|_{L^p(\mathcal D^2)}^p+\tfrac{|f_\Qq|_\infty^p}{m(\nu)^p}\|\nu-\nu_n\|_{L^p(\mathcal D)}^p\big)$$
holds as well. We conclude by applying successively the estimate  \eqref{upper f} and Theorem~\ref{thm:ratenu}.

\subsection{Proof of Theorem~\ref{thm:rateq}, lower bound}
We only give a brief sketch: the proof follows classical lower bounds techniques, bounding appropriate statistical distances along hypercubes, see \cite{DJKP, HKPT98} and more specifically \cite{Clemencon2, H4, Lacour2} for specific techniques involving Markov chains. We separate the so-called {\it dense} and {\it sparse} case.\\

\noindent {\it The dense case $\varepsilon_2 >0$}. Let $\psi_\lambda:\mathcal D^2\rightarrow \R$ a family of (compactly supported) wavelets adapted to the domain $\mathcal D$ and satisfying Assumption~\ref{AssumptionA}. For $j$ such that $|\TT_n|^{-1/2} \lesssim 2^{-j(s+1)}$, consider the family 
$$\Qq_{\epsilon,j}(x,y)= |\mathcal D^2|^{-1}{\bf 1}_{\Dd^2}(x,y) + \gamma |\TT_n|^{-1/2} \sum_{\lambda \in \Lambda_j} \epsilon_\lambda \psi_{\lambda}^2 (x,y)$$
where $\epsilon \in \{-1, 1\}^{\Lambda_j}$ and $\gamma>0$ is a tuning parameter (independent of $n$). Since $|\psi_\lambda^2|_\infty \leq C_\infty 2^{|\lambda|} = C_\infty 2^j$ and since the number of overlapping terms in the sum is bounded (by some fixed integer $N$), we have
$$ \gamma |\TT_n|^{-1/2} \big|\sum_{\lambda \in \Lambda_j} \epsilon_\lambda \psi_{\lambda}^2 (x,y)\big| \leq \gamma |\TT_n|^{-1/2}N C_\infty 2^j \lesssim \gamma.$$
This term can be made smaller than $|\mathcal D^2|^{-1}$ by picking $\gamma$ sufficiently small. Hence $\Qq_{\epsilon,j}(x,y) \geq 0$ and since $\int \psi_\lambda=0$, the family $\Qq_{\epsilon,j}(x,y)$ are all admissible {\it mean} transitions with common invariant measure $\nu(dx)={\bf 1}_{\mathcal D}(x)dx$ and belong to a common ball in $\mathcal B^s_{\pi,\infty}(\mathcal D^2)$. For $\lambda \in \Lambda_j$, define $T_\lambda: \{-1,1\}^{\Lambda_j}\rightarrow \{-1,1\}^{|\Lambda_j|}$ by $T_\lambda(\epsilon_\lambda)=-\epsilon_\lambda$ and $T_\lambda(\epsilon_\mu)=\epsilon_\mu$ if $\mu \neq \lambda$. The lower bound in the dense case is then a consequence of the following inequality
\begin{equation} \label{tv control}
\limsup_n\max_{\epsilon \in \{-1,1\}^{\Lambda_j}, \lambda \in \Lambda_j}\|\PP_{\epsilon,j}^n-\PP_{T_\lambda(\epsilon),j}^n\|_{TV}<1,
\end{equation}
where $\PP_{\epsilon,j}^n$ is the law of $(X_u)_{u \in \mathbb T_n}$ specified by the $\TT$-transition $\Ttransition_{\epsilon,j}=\Qq_{\epsilon,j} \otimes \Qq_{\epsilon,j}$ and the initial condition $\mathcal L(X_\emptyset)=\nu$. \\

We briefly show how to obtain \eqref{tv control}. By Pinsker's inequality, it is sufficient to prove that 
$\EE_{\epsilon,j}^n \big[\log\frac{d\PP^n_{\epsilon,j}}{d\PP^n_{T_\lambda(\epsilon),j}} \big]$ can be made arbitrarily small uniformly in $n$ (but fixed).
We have
\begin{align*}
\EE_{\epsilon,j}^n \Big[ - \log\frac{d\PP^n_{T_\lambda(\epsilon),j}}{d\PP^n_{\epsilon,j}} \Big] 
&= - \sum_{u \in \TT_n} \EE_{\epsilon,j}^n \Big[ \log \frac{\Pp_{T_\lambda(\epsilon),j}(X_{u} , X_{u0}, X_{u1} )}{\Pp_{\epsilon,j}(X_{u} , X_{u0}, X_{u1} )} \Big] \\
& = - \sum_{u \in \TT_{n+1}^\star} \EE_{\epsilon,j}^n \Big[ \log \frac{\Qq_{T_\lambda(\epsilon),j}(X_{u^-} , X_u )}{\Qq_{\epsilon,j}(X_{u^-} , X_u )} \Big] \notag \\
& = - |\TT_{n+1}^\star| \int_{\mathcal D^2} \log \Big( \frac{\Qq_{T_\lambda(\epsilon),j} (x , y)}{\Qq_{\epsilon,j}(x , y )}\Big) \Qq_{\epsilon,j}(x , y ) \nu(dx) dy\\
& \leq \hphantom{-} |\TT_{n+1}^\star| \int_{\mathcal D^2}\Big( \frac{\Qq_{T_\lambda(\epsilon),j} (x , y)}{\Qq_{\epsilon,j}(x , y )}-1\Big)^2\Qq_{\epsilon,j}(x , y )  \nu(dx) dy
\end{align*}
using $-\log(1+z) \leq z^2 - z$ valid for $z \geq -1/2$ and the fact that $\nu(dx)$ is an invariant measure for both $\Qq_{T_\lambda(\epsilon),j}$ and $\Qq_{\epsilon,j}$.
Noting that 
\begin{equation*}
\Qq_{T_\lambda(\epsilon), j} (x,y) = \Qq_{\epsilon, j} (x,y)  - 2\gamma |\TT_n|^{-1/2} \epsilon_{\lambda} \psi_{\lambda}^2 (x, y),
\end{equation*}
we derive
$$\Big| \frac{\Qq_{T_\lambda(\epsilon),j} (x , y)}{\Qq_{\epsilon,j}(x , y )}-1\Big| \leq  \frac{2 \gamma |\TT_n|^{-1/2}C_\infty 2^j}{ 1 - \gamma |\TT_n|^{-1/2} N C_\infty 2^j} \lesssim \gamma |\mathbb T_n|^{-1/2}$$
hence the squared term within the integral is of order $\gamma^2 |\mathbb T_n|^{-1}$ so that, by picking $\gamma$ sufficiently small, our claim about  $\EE_{\epsilon,j}^n \big[\log\frac{d\PP^n_{\epsilon,j}}{d\PP^n_{T_\lambda(\epsilon),j}} \big]$ is proved and \eqref{tv control} follows.\\

\noindent {\it The sparse case $\epsilon_2 \leq 0$}. We now consider the family
$$\Qq_{\lambda,j}(x,y)= |\mathcal D^2|^{-1}{\bf 1}_{\Dd^2}(x,y) + \gamma \big(\tfrac{\log |\mathbb T_n|}{|\TT_n|}\big)^{1/2}\epsilon_\lambda \psi_{\lambda}^2 (x,y)$$
with $\epsilon_\lambda\in \{-1,+1\}$ and $\lambda \in \Lambda_j$, with $j$ such that $\big(\tfrac{\log |\mathbb T_n|}{|\TT_n|}\big)^{1/2}\lesssim 2^{-j(s+1-2/\pi)}$. The lower bound then follows from the representation
$$
 \log\frac{d\PP^n_{\lambda,j}}{d\PP^n_{\nu}} = {\mathcal U}^n_\lambda - \omega_\lambda \log 2^{j}
 $$
where $\PP^n_{\lambda,j}$ and $\PP_\nu^n$ denote the law of $(X_u)_{u \in \mathbb T_n}$ specified by the $\TT$-transitions $\Qq_{\lambda,j} \otimes \Qq_{\lambda,j}$ and $\nu \otimes \nu$ respectively (and the initial condition $\mathcal L(X_\emptyset)=\nu$); the $\omega$'s are such that $\sup_n\max_{\lambda \in \Lambda_j}\omega_\lambda  <1 $, and $\mathcal U_\lambda^n$ are random variables such that $\PP_{\lambda,j}^n\big( {\mathcal U}^n_\lambda \geq - C_1\big) \geq C_2 >0$ for some $C_1,C_2>0$. We omit the details, see {\it e.g.} \cite{Clemencon2, H4, Lacour2}.

\subsection{Proof of Theorem~\ref{thm:rateP}}

\begin{proof}[Proof of Theorem~\ref{thm:rateP}, upper bound]  We closely follow Theorem~\ref{thm:rateq} with $c(n) = (n \log |\mathbb T_{n-1}|/|\mathbb T_{n-1}|)^{1/2}$ and $J=J_n$ such that $2^{d J_n} \sim c(n)^{-2}$ with $d = 3$ now. With $\delta=\delta_n=p\varkappa c(n)$, for $\varkappa \geq   \tfrac{4}{p} (1-2\rho)^{-1} C_\infty  (2 R +1)$, we have $\delta_n \geq 4 |\psi_\lambda^3|_\infty (Rn+1)|\TT_n^\star|^{-1}$.\\

Furthermore, since $|\psi_\lambda^d|_p \leq C_p2^{d |\lambda|(1/2-1/p)}$ for $p=1,2,\infty$ and $2^{d|\lambda|} \leq 2^{d J_n}\leq c(n)^{-2}$
it is not difficult to check that
\begin{align*}
\Sigma_{2,n}(\psi_\lambda)  \leq \max\big\{|\Ttransition|_{\mathcal D,1}|\Qq|_\mathcal D, |\Ttransition|_{\mathcal D, 1}^2\big\} \Sigma_{1,n}(\psi_\lambda) \leq C
\end{align*}
thanks to Assumption~\ref{densityTtrans} and \eqref{cont sigma 1}, and thus
$\kappa_1 \Sigma_{2,n}(g) \leq \kappa_1C=C'$. We also have
$\kappa_2 |\psi_\lambda^d|_\infty \delta_n \leq \kappa_2 C_\infty 2^{|\lambda|d/2}c(n)p \varkappa \leq C'' p \varkappa$,
where $C''$ does not depend on $n$. Noting that $f_{\lambda} = \langle f_\Ttransition, \psi_\lambda^d\rangle = \int \Ttransition\psi_\lambda^d d\nu$, we apply Theorem~\ref{thm:triplets}\,(ii) to $g=\psi_\lambda$ and derive
$$\PP\big(|\widehat f_{\lambda, n}- f_\lambda| \geq p \varkappa c(n)\big) \leq 2 \exp\Big(-\frac{n^{-1}|\mathbb T_{n-1}|p^2\varkappa^2c(n)^2}{C'+C''p\varkappa}\Big) \leq c(n)^{2p}$$
for every $|\lambda | \leq J_n$ as soon as $\kappa$ is large enough
and  the estimate
$$\EE \Big(\big[ \|  \widehat{f}_n - f_\Ttransition \|_{L^p(\Dd^3)}^p \big]\Big)^{1/p} \lesssim \Big(\frac{ n \log |\mathbb T_n| }{|\mathbb T_n|}\Big)^{\alpha_3(s,p,\pi)}$$
follows thanks to the theory of \cite{KP}. The end of the proof follows Step 2 of the proof of Theorem~\ref{thm:rateq} line by line, substituting $f_\Qq$ by $f_\Ttransition$.
\end{proof}
\begin{proof}[Proof of Theorem~\ref{thm:rateP}, lower bound]  This is a slight modification of the proof of Theorem~\ref{thm:rateq}, lower bound. For the dense case $\epsilon_3 >0$, we consider an hypercube of the form
$$\Ttransition_{\epsilon,j}(x,y,z)= |\mathcal D^3|^{-1}{\bf 1}_{\Dd^3}(x,y,z) + \gamma |\TT_n|^{-1/2} \sum_{\lambda \in \Lambda_j} \epsilon_\lambda \psi_{\lambda}^3 (x,y,z)$$
where $\epsilon \in \{-1, 1\}^{\Lambda_j}$ with $j$ such that $|\mathbb T_n|^{-1/2} \lesssim  2^{-j(s+3/2)}$ and $\gamma >0$ a tuning parameter,
 while for the sparse case $\epsilon_3 \leq 0$, we consider the family 
$$\Ttransition_{\lambda,j}(x,y,z)= |\mathcal D^3|^{-1}{\bf 1}_{\Dd^3}(x,y,z) + \gamma \big(\tfrac{\log |\mathbb T_n|}{|\TT_n|}\big)^{1/2} \epsilon_\lambda \psi_{\lambda}^3 (x,y,z)$$
with $\epsilon_\lambda\in \{-1,+1\}$, $\lambda \in \Lambda_j$, and $j$ such that $\big(\tfrac{\log |\mathbb T_n|}{|\TT_n|}\big)^{1/2}\lesssim 2^{-j(s+3(1/2-1/\pi))}$.
The proof then goes along a classical line.
\end{proof}


\subsection{Proof of Theorem~\ref{prop:rateB}}
\begin{proof}[Proof of Theorem~\ref{prop:rateB}, upper bound] 
Set $\widehat v_n(x)=\tfrac{1}{|\mathbb T_n|}\sum_{u \in \mathbb T_n}{\bf 1}_{\{x/2 \leq X_u\leq x\}}$ and  $v_\nu(x)=\int_{x/2}^x\nu_B(y)dy$. By Propositions 2 and 4 in Doumic {\it et al.} \cite{DHKR1}, one can easily check that $\sup_{x\in \mathcal D}\nu_B(x)<\infty$ and $\inf_{x \in \mathcal D} v_\nu(x)>0$ with some uniformity in $B$ by Lemma 2 and 3 in \cite{DHKR1}. For $x \in \mathcal D$, we have
\begin{align*}
 \big|\widehat B_n(x)-B(x)\big|^p \lesssim &\, \tfrac{1}{\varpi^p}\big|\widehat \nu_n(x)-\nu_B(x)\big|^p+\frac{\sup_{x\in \mathcal D}\nu_B(x)^p}{\inf_{x \in \mathcal D} v_\nu(x)^p}\big|\max\{\widehat v_n(x), \varpi\}-v_\nu(x)\big|^p \\
\lesssim &\, \big|\widehat \nu_n(x)-\nu_B(x)\big|^p + \big|\widehat v_n(x)- v_\nu(x)\big|^p. 
\end{align*}
By Theorem~\ref{thm:1step} (ii) with $g={\bf 1}_{\{x/2 \leq \cdot \leq x\}}$, one readily checks  $$\EE \big[ |\widehat v_n(x)- v_\nu(x)|^{p} \big] = \int_0^{\infty} p u^{p-1} \PP \big( | \widehat v_n(x)-v_\nu(x)| \geq u \big) du \lesssim |\mathbb T_n|^{-p/2}$$
and this term is negligible. Finally, it suffices to note that $\|\nu_B\|_{\mathcal B^s_{\pi,\infty}(\mathcal D)}$ is finite as soon as $\|B\|_{\mathcal B^s_{\pi,\infty}(\mathcal D)}$ is finite. This follows from 
$$\nu_B(x)=\int_{\mathcal S}\nu_B(y)Q_B(y,x) dy= \frac{B(2x)}{\tau x}\int_0^{2x} \nu_B(y)\exp\big(-\int_{y/2}^x\frac{B(2z)}{\tau z}dz\big)dy.$$
We conclude by applying Theorem~\ref{thm:ratenu}.
\end{proof}

\begin{proof}[Proof of Theorem~\ref{prop:rateB}, lower bound] 
This is again a slight modification of the proof of Theorem~\ref{thm:rateq}, lower bound. For the dense case $\epsilon_1 >0$, we consider an hypercube of the form
$$
B_{\epsilon,j}(x) = B_0(x) + \gamma |\TT_n|^{-1/2} \sum_{\lambda_j} \epsilon_k \psi_{\lambda}^1(x)
$$
where $\epsilon \in \{-1, 1\}^{\Lambda_j}$ with $j$ such that $|\mathbb T_n|^{-1/2} \lesssim 2^{-j(s+1/2)}$ and $\gamma >0$ a tuning parameter. By picking $B_0$ and $\gamma$ in an appropriate way, we have that $B_0$ and $B_{\ep,j}$ belong to a common ball in $\mathcal B^s_{\pi,\infty}(\mathcal D)$ and also belong to $\mathcal C(r,L)$. The associated $\mathbb T$-transition $\Ttransition_{B_{\epsilon,j}}$ defined in \eqref{Pp_B} admits as mean transition
$$\Qq_{B_{\epsilon,j}} (x, dy) = \frac{B_{\epsilon,j}(2y)}{\tau y} \exp \Big( - \int_{x/2}^{y} \frac{B_{\epsilon,j}(2z)}{\tau z} dz \Big) {\bf 1}_{\{y \geq x/2\}}dy$$
which has a unique invariant measure $\nu_{B_{\epsilon,j}}$. Establishing \eqref{tv control} is similar to the proof of Theorem~\ref{thm:rateq}, lower bound, using the explicit representation for  $\Qq_{B_{\epsilon,j}}$ with a slight modification due to the fact that the invariant measures $\nu_{B_{\epsilon,j}}$ and $\nu_{B_{T_\lambda(\epsilon),j}}$ do not necessarily coincide. We omit the details. 

For the sparse case $\epsilon_1 \leq 0$, we consider the family 
$$B_{\lambda,j}(x)=B_0(x) + \gamma \big(\tfrac{\log |\mathbb T_n|}{|\TT_n|}\big)^{1/2} \epsilon_\lambda \psi_{\lambda}^1 (x)$$
with $\epsilon_\lambda\in \{-1,+1\}$, $\lambda \in \Lambda_j$, with $j$ such that $\big(\tfrac{\log |\mathbb T_n|}{|\TT_n|}\big)^{1/2}\lesssim 2^{-j(s+1/2-1/\pi)}$. The proof is then similar.
\end{proof}

\section{Appendix} \label{sec:appendix}

\subsection{Proof of Lemma~\ref{lem:control1}}
\noindent {\it The case $r=0$}. 
By Assumption~\ref{ass:ergq},
$$\big| \gtilde(X_{u0}) + \gtilde(X_{u1}) - 2\Qq \gtilde(X_{u}) \big| \leq 2 \big( |\gtilde|_\infty +R  |\gtilde|_\infty  \rho\big) \leq 4(1+R\rho)  |g|_\infty .$$
This proves the first estimate in the case $r=0$. For $u \in \GG_{n-1}$,
\begin{align*}
\EE \big[ &  \big( \gtilde(X_{u0}) + \gtilde(X_{u1}) - 2\Qq \gtilde(X_{u}) \big)^2 | \Ff_{n-1} \big] \\
 & = \EE \big[ \big( g(X_{u0}) + g(X_{u1}) - 2\Qq g(X_{u}) \big)^2 | \Ff_{n-1} \big]  \\
& \leq \EE \big[ \big( g(X_{u0}) + g(X_{u1}) \big)^2 | \Ff_{n-1} \big] \leq 2 \big( \Pp_0 g^2(X_u) + \Pp_1 g^2(X_u) \big) = 4 \Qq g^2(X_u)
\end{align*}
and for $x \in \Ss$, by Assumption~\ref{ass:densityq}, 
$$
\Qq g^2(x) = \int_{\Ss} g(y)^2 \Qq(x,y) \mathfrak n(dy) \leq |\Qq|_{\mathcal D} |g|_2^2 
$$
since $g$ vanishes outside $\mathcal D$.
Thus 
\begin{equation} \label{eq:regal0}
\EE \big[ \big( \gtilde(X_{u0}) + \gtilde(X_{u1}) - 2\Qq \gtilde(X_{u}) \big)^2 | \Ff_{n-1} \big]  \leq 4 |\Qq|_{\mathcal D} |g|_2^2 
\end{equation}
hence the result for $r=0$.
\vip
\noindent {\it The case $r\geq 1$}. 
On the one hand, by Assumption~\ref{ass:ergq},
\begin{align} 
 \big| 2^r \big(\Qq^{r} \gtilde(X_{u0}) + \Qq^{r} \gtilde(X_{u1}) - 2 \Qq^{r+1} \gtilde(X_{u})\big) \big|
\leq &\, 2^r \big(2R |\gtilde|_\infty (\rho^r + \rho^{r+1})\big) \nonumber
 \\ 
 \leq &\, 4 R(1+\rho) |g|_\infty (2\rho)^r.  \label{bal1}
\end{align}
On the other hand, since
 $$| \Qq g(x) | \leq \int_{\Ss} |g(y)| \Qq(x,y) \mathfrak n(dy)  \leq |\Qq|_{\mathcal D} |g|_1,$$
we also have
\begin{align} 
2^r \big| \Qq^{r} \gtilde(X_{u0}) + \Qq^{r} \gtilde(X_{u1}) - 2 \Qq^{r+1} \gtilde(X_{u}) \big| = & \,2^r \big| \Qq^{r} g(X_{u0}) + \Qq^{r} g(X_{u1}) - 2 \Qq^{r+1} g(X_{u}) \big| \nonumber \\
\leq &\, 2^r 4  |\Qq|_{\mathcal D} |g|_1. \label{bal2}
\end{align}
Putting together these two estimates yields the result for the case $r\geq 1$.

\subsection{Proof of Lemma~\ref{lem:control2}}
By Assumption~\ref{ass:ergq},
$$
| \Upsilon_r(X_u,X_{u0},X_{u1}) | \leq 2 \sum_{m=0}^r 2^m R|\gtilde|_\infty \rho^m (1+\rho) \leq 4 R|g|_\infty (1+\rho) (1-2\rho)^{-1}
$$
since $\rho<1/2$. This proves the first bound. For the second bound we balance the estimates \eqref{bal1} and \eqref{bal2} obtained in the proof of Lemma ~\ref{lem:control1}.
Let $\ell \geq 1$. For $u \in \GG_{n-r-1}$, we have
$$
| \Upsilon_r(X_u,X_{u0},X_{u1}) | \leq I + II + III,
$$
with
\begin{align*}
I & = \big| \gtilde(X_{u0}) + \gtilde(X_{u1}) -  \Qq \gtilde(X_{u}) \big|,  \\
II & =  \sum_{m = 1}^{\ell \wedge r }  2^m \big|  \Qq^m \gtilde(X_{u0}) + \Qq^m \gtilde(X_{u1}) -  2 \Qq^{m+1} \gtilde(X_{u}) \big| , \\
III & =  \sum_{m = \ell\wedge r+1}^r  2^m  \big| \Qq^m \gtilde(X_{u0}) + \Qq^m \gtilde(X_{u1}) -  2 \Qq^{m+1} \gtilde(X_{u}) \big|,
\end{align*}
with $III=0$ if $\ell >r$.
For $u \in \GG_{n-r-1}$, by \eqref{eq:regal0}, we successively have 
$$\EE[I^2 | \Ff_{n-r-1}] \leq 4 |\Qq|_{\mathcal D} |g|_2^2,$$
\begin{align*}
II  \leq  4  |\Qq|_{\mathcal D} |g|_1  \sum_{m = 1}^{\ell \wedge r } 2^m   \leq 8 |\Qq|_{\mathcal D} |g|_1 2^{\ell\wedge r}
\end{align*}
by \eqref{bal2}, while for $\ell  \leq  r$,
\begin{align*}
III  \leq 4 R (1+\rho) |g|_\infty  \sum_{m = \ell +1}^r (2 \rho)^{m} 
\leq  4 R (1+\rho) (1-2\rho)^{-1} |g|_\infty (2 \rho)^{\ell+1}
\end{align*}
by \eqref{bal1}. The result follows.\\


\noindent {\bf Acknowledgements.} We are grateful to A. Guillin for helpful discussions. The research of S.V. B.P. is supported by the Hadamard Mathematics Labex of the Fondation Math\'ematique Jacques Hadamard. The research of M.H. is partly supported by the Agence Nationale de la Recherche, (Blanc SIMI 1 2011 project CALIBRATION).

\end{document}